\numberwithin{equation}{section}
\numberwithin{figure}{section}
\theoremstyle{plain}
\newtheorem{thm}{\protect\theoremname}[section]
  \theoremstyle{definition}
  \newtheorem{defn}[thm]{\protect\definitionname}
  \theoremstyle{definition}
  \newtheorem{example}[thm]{\protect\examplename}
  \newtheorem{notation}[thm]{\protect\notationname}
  \newtheorem{convention}[thm]{\protect\conventionname}
  \theoremstyle{plain}
  \newtheorem{prop}[thm]{\protect\propositionname}
  \theoremstyle{plain}
  \newtheorem{lem}[thm]{\protect\lemmaname}
  \theoremstyle{remark}
  \newtheorem{rem}[thm]{\protect\remarkname}
  \theoremstyle{plain}
  \newtheorem{cor}[thm]{\protect\corollaryname}
  \theoremstyle{plain}
  \newtheorem{question}[thm]{\protect\questionname}
\theoremstyle{remark}
\newtheorem{parn}{}[subsubsection]
  \providecommand{\corollaryname}{Corollary}
  \providecommand{\definitionname}{Definition}
  \providecommand{\notationname}{Notation} 
  \providecommand{\conventionname}{Convention} 
  \providecommand{\examplename}{Example}
  \providecommand{\lemmaname}{Lemma}
  \providecommand{\propositionname}{Proposition}
  \providecommand{\questionname}{Question}
  \providecommand{\remarkname}{Remark}
\providecommand{\theoremname}{Theorem}
\begin{document}
\subjclass[2010]{14R05 14R25 14E05 14P25 14J26}
\keywords{real algebraic model; affine surface; rational fibration; birational diffeomorphism, affine complexification}

\author{Adrien Dubouloz}

\address{IMB UMR5584, CNRS, Univ. Bourgogne Franche-Comté, F-21000 Dijon,
France.}

\email{adrien.dubouloz@u-bourgogne.fr}

\author{Frédéric Mangolte}

\address{LAREMA UMR6093\\
CNRS, Univ. Angers, Univ. Bretagne-Loire\\  
F-49000 Angers, France.}

\email{frederic.mangolte@univ-angers.fr}

\thanks{This project was partialy funded by ANR Grant \textquotedbl{}BirPol\textquotedbl{}
ANR-11-JS01-004-01. }

\title[Fake Real Planes]{Fake Real Planes: exotic affine algebraic models of $\mathbb{R}^{2}$}
\begin{abstract}
We study real rational models of the euclidean plane $\mathbb{R}^{2}$ up to isomorphisms and up to birational diffeomorphisms. The analogous study in the compact case, that is the classification of real rational models of the real projective plane $\mathbb{R}\mathbb{P}^{2}$ is well known: up to birational diffeomorphisms, there is only one model. A fake real plane is a nonsingular affine surface 
defined over the reals with homologically trivial complex locus and real locus diffeomorphic to $\mathbb{R}^2$ but which is not isomorphic to the real affine plane. We prove that fake planes exist by giving many examples and we tackle the question: does there exist fake planes 
whose real locus is not birationally diffeomorphic to the real affine plane? 
\end{abstract}
\maketitle

\section*{Introduction}

An algebraic complexification of a real smooth $\mathcal{C}^{\infty}$-manifold
$M$ is a smooth complex quasi-projective algebraic variety $V$ endowed
with an anti-regular involution $\sigma$ such that $M$ is diffeomorphic
to the \emph{real locus} $V^{\sigma}$ of $V$. Equivalently $V$ is the complex
model $X\times_{\mathrm{Spec}(\mathbb{R})}\mathrm{Spec}(\mathbb{C})$
of a smooth quasi-projective algebraic variety $X$ defined over $\mathbb{R}$,
whose set of $\mathbb{R}$-rational points is diffeomorphic to $M$
when equipped with the euclidean topology. Some manifolds such as
real projective spaces $\mathbb{RP}^{n}$ and real euclidean affine
spaces $\mathbb{R}^{n}$ have natural algebraic complexifications,
given by the complex projective and affine spaces $\mathbb{P}_{\mathbb{C}}^{n}$
and $\mathbb{A}_{\mathbb{C}}^{n}$ respectively. But these also admit
infinitely many other complexifications, and it is a natural problem
to try to classify them up to appropriate notions of equivalence. 

For example, the real projective plane $M=\mathbb{R}\mathbb{P}^{2}$
is also the real locus of the quotient $V_{1}$ of the smooth quartic hypersurface $x^{4}+y^{4}+z^{4}-w^{4}=0$
in $\mathbb{P}_{\mathbb{C}}^{3}$ by the fixed point-free involution $[x:y:z:w]\mapsto [-y:x:-w:z]$.
The complex surface $V_{1}$ endowed with the induced real structure is not isomorphic to $V_{0}=\mathbb{P}_{\mathbb{C}}^{2}$
endowed with the usual complex conjugation, and actually, since $V_{1}$ is an Enriques surface (see e.g. \cite[Chapter VIII]{BPV}), $V_{1}$
and $V_{0}$ are not even birational to each other. On the other hand,
we obtain infinite families of pairwise non-isomorphic rational projective
complexifications $V$ of $\mathbb{R}\mathbb{P}^{2}$ by blowing-up
sequences of pairs of non-real complex conjugate points of $\mathbb{P}_{\mathbb{C}}^{2}$.

Thus whatever is the equivalence relation: biregular, birational or deformation, there are an infinite number of possibilities. It is natural to pay attention to complexifications with ``minimal topology'', say in the sense of Betti numbers.
Recall that a \emph{fake projective plane}, as defined by Mumford \cite{Mu79}, is a nonsingular complex projective surface $S$, whose Betti numbers are those of $\mathbb{CP}^{2}$, and which is not biregularly isomorphic to $\mathbb{P}_{\mathbb{C}}^{2}$.
One could define a \emph{fake real projective plane} to be a complex fake projective plane with anti-regular involution, whose real locus is diffeomorphic to $\mathbb{RP}^{2}$ and which is not biregularly isomorphic to $\mathbb{P}_{\mathbb{R}}^{2}$, but despite of the existence of $100$ fake projective planes up to biregular isomorphism \cite{PY07,PY10,CS10}, none of them admits a real structure as proved by Kulikov and Kharlamov \cite[Thm. 5.1]{KK02}. Thus there is no fake real projective plane at all.

In addition to $\mathbb{R}\mathbb{P}^{2}$, the connected compact
surfaces $M$ diffeomorphic to the real locus of smooth rational projective
surface minimal over $\mathbb{R}$ are the sphere $S^{2}$, the torus
$S^{1}\times S^{1}$ and the Klein bottle $K=\mathbb{R}\mathbb{P}^{2}\#\mathbb{R}\mathbb{P}^{2}$.
Their respective minimal complexifications are $\mathbb{P}_{\mathbb{C}}^{2}$,
the quadric hypersurface $x^{2}+y^{2}+z^{2}-w^{2}=0$ in $\mathbb{P}_{\mathbb{C}}^{3}$,
the Hirzebruch surfaces of even index $\mathbb{F}_{2k}$, $k\geq0$,
and the Hirzebruch surfaces of odd index $\mathbb{F}_{2k+1}$, $k\geq1$. The minimality of the
complexification $V$ endowed with the real structure is equivalent
to the minimality of its topology as a compact complex manifold among
all complexifications of $M$. The above description shows that, for each of the surfaces $\mathbb{R}\mathbb{P}^{2}$, $S^{2}$, $S^{1}\times S^{1}$ and $K$, complexifications 
with minimal topology are either unique, or all diffeomorphic to each
others, belonging to a unique equivalence class of deformation. 

Coming back to surfaces obtained by blowing-up
sequences of pairs of non-real conjugated points of $\mathbb{P}_{\mathbb{C}}^{2}$, 
it follows from their construction that they are all $\mathbb{R}$-biregularly
birationally equivalent: this means that for every pair $V$ and $V'$
of such complexifications, there exists birational maps $\varphi:V\dashrightarrow V'$
and $\varphi'\colon V'\dashrightarrow V$ inverse to each other and whose
restrictions to the real loci of $V$ and $V'$ are diffeomorphisms
inverse to each other. A striking result of Biswas and Huisman \cite{BiH07} (see
also \cite{HM09}) asserts that rational algebraic complexifications of
a given smooth connected compact surface $M$ are all $\mathbb{R}$-biregularly
birationally equivalent. 
This classification result gave rise to many
further discoveries, see the survey article \cite{Ma15} and the bibliography
given there.

In this paper, we lay the importance on affine complexifications and
we discover that, contrary to the projective case, the easiest example
$M=\mathbb{R}^{2}$ possesses a lot of affine complexifications $S$
with ``minimal topology'' which are not biregularly isomorphic to
$\mathbb{A}^{2}$. We call them \emph{fake real planes}. In contrast
with the projective case, where the notion of rational complexification
with minimal topology is unambiguous due to the fact that the topology
of a smooth rational complexification $V$ of a given compact surface
$M$ is fully determined by its Picard rank $\rho(V)=\mathrm{rk}(N_{1}(V))$,
there are many possibilities to define a notion of minimality
of the topology of an affine complexification, even in the rational
case. For instance, there exists many rational affine complexifications
of $\mathbb{R}^{2}$ with vanishing second homology group $H_{2}(S;\mathbb{Z})$
but nontrivial fundamental group. Inspired by the work of Totaro \cite{To03},
who defined a \emph{good affine complexification} of $M$ to be a
complexification $S$ for which the inclusion of $M\hookrightarrow S$
of $M$ as the real locus of $S$ is a homotopy equivalence, a natural
notion of minimality of affine complexifications $S$ of $M=\mathbb{R}^{2}$
is to require that $S$ is contractible as a smooth complex manifold.
Here we mainly consider weaker variants in which we require only that
the inclusion $M\hookrightarrow S$ induces an isomorphism between
the respective homology groups of $M$ and $S$, taken with integral
or rational coefficients. So for $M=\mathbb{R}^{2}$, the corresponding
smooth affine complexifications $S$ are respectively $\mathbb{Z}$-acyclic
and $\mathbb{Q}$-acyclic complex surfaces.

Our first goal is to show that fake real planes do exist by
exhibiting many examples and to give elements of classification of
these objects up to biregular isomorphism depending on natural algebro-geometric
invariants such as their logarithmic Kodaira dimension. Any contractible affine complexification $S$ of $\mathbb{R}^{2}$ of non positive logarithmic Kodaira dimension is isomorphic to $\mathbb{A}^{2}$ (\cite[Theorem 4.7.1 (1), p. 244]{MiyBook} and \cite{MS80}), in particular, there is no good affine complexification of $\mathbb{R}^{2}$
of logarithmic Kodaira dimension $0$. 
In contrast, we give several families of rational good affine complexifications of $\mathbb{R}^{2}$
of logarithmic Kodaira dimension $1$ (see Examples \ref{Ex:homotopy-Kod1}) and $2$ respectively, which
are therefore not biregularly isomorphic to $\mathbb{A}^{2}$. 
Another striking family of examples is provided by some Ramanujam surfaces \cite{Ram71}, a famous class of smooth complex contractible affine surfaces of logarithmic Kodaira dimension $2$, which admit real structures with real locus diffeomorphic to $\mathbb{R}^2$ (see \ref{Ex:Ramanujam-Surf}).  
 
As a step towards a classification of fake planes, we establish a real counter-parts of a series of results due to thom Dieck and Petrie \cite{tDPe93} describing the structure of $\mathbb{Z}$-acyclic and
$\mathbb{Q}$-acyclic smooth complex affine surfaces in terms of blow-ups
of arrangements of rational curves in $\mathbb{P}_{\mathbb{C}}^{2}$.
As an application, we obtain a complete classification of $\mathbb{Z}$-acyclic
smooth affine complexifications of $\mathbb{R}^{2}$ of logarithmic
Kodaira dimension $1$ (Theorem \ref{thm:Fake-Kodaira-1}) and a precise
description of the structure of $\mathbb{Q}$-acyclic smooth affine
complexifications of $\mathbb{R}^{2}$ of logarithmic Kodaira dimension
$2$ (Theorem \ref{thm:Kod2-arrangement}) formulated in terms of
arrangements of $\mathbb{R}$-rational curves in $\mathbb{P}_{\mathbb{R}}^{2}$. 

In a second step, we tackle the classification of fake real planes
with $\mathbb{Q}$-acyclic smooth affine complexifications up to $\mathbb{R}$-biregular
birational equivalence: we prove that a large class of such surfaces
are $\mathbb{R}$-biregularly birationally equivalent to $\mathbb{A}^{2}$.
More precisely, we establish that every smooth $\mathbb{Q}$-acyclic
complexification $S$ of $\mathbb{R}^{2}$ with negative logarithmic
Kodaira dimension admits a surjective morphism $\pi\colon S\rightarrow\mathbb{A}_{\mathbb{R}}^{1}$
defined over $\mathbb{R}$, with general fiber isomorphic to the affine
line (Theorem \ref{thm:Q-acyclic-Neg-Desc}), and we show that every
such fibered surface $\pi:S\rightarrow\mathbb{A}^{1}$ with at most
one singular fiber is $\mathbb{R}$-biregularly birationally equivalent
to $\mathbb{A}^{2}$ (Theorem \ref{thm:Bir-rectif}). 

We saw above that for $\mathbb{R}\mathbb{P}^{2}$, $S^{2}$, $S^{1}\times S^{1}$ and $K=\mathbb{R}\mathbb{P}^{2}\#\mathbb{R}\mathbb{P}^{2}$, there exists either a unique minimal rational complexification or at most one family of pairwise non isomorphic but $\mathbb{R}$-biregularly birationally equivalent minimal rational complexifications. We conclude the paper with the construction of an infinite number of fake planes with moduli of arbitrary positive dimension of pairwise non isomorphic, deformation equivalent, $\mathbb{Q}$-acyclic euclidean planes all $\mathbb{R}$-biregularly birationally equivalent to $\mathbb{A}_{\mathbb{R}}^{2}$ (see $\S$ \ref{sub:Moduli-of--biregularly}). 

In contrast with the projective case, the main difficulty to understand
the notion of $\mathbb{R}$-biregular birational equivalence comes
from the lack of natural numerical invariants to distinguish classes.
In particular, neither the topology of the complexification of a given fake real plane $S$ nor
its logarithmic Kodaira dimension are invariants of its $\mathbb{R}$-biregular
birational class. And even though Theorem \ref{thm:Bir-rectif} is
a significant step towards a complete classification of fake real
planes up to $\mathbb{R}$-biregular birational equivalence, the fact
that its proof depends on the construction of explicit elementary
$\mathbb{R}$-biregular birational links between appropriate projective
models of the affine complexification of $S$ does not give any clear
insight on possible numerical invariants of $\mathbb{R}$-biregular
birational equivalence classes. As a consequence, the question of
existence of fake real planes not $\mathbb{R}$-biregularly birationally equivalent to $\mathbb{A}^{2}$ remains open, a good candidate being the real Ramanujam surfaces mentioned above (see also $\S$ \ref{sub:Rectif-kod0}
for another candidate of logarithmic Kodaira dimension $0$). 

\tableofcontents{}

\section{Preliminaries}

In this article, the term $\mathbf{k}$-variety will always refer
to a geometrically integral scheme $X$ of finite type over a base
field $\mathbf{k}$ of characteristic zero. A morphism of $\mathbf{k}$-varieties
is a morphism of $\mathbf{k}$-schemes. In the sequel, $\mathbf{k}$
will be most of the time equal to either $\mathbb{R}$ or $\mathbb{C}$,
and we will say that $X$ is a real, respectively complex, algebraic
variety. 

A complex algebraic variety $X$ will be said to be defined over $\mathbb{R}$
if there exists a real algebraic variety $X_{0}$ and an isomorphism
of complex algebraic varieties between $X$ and the \emph{complexification}
$X{}_{0,\mathbb{C}}=X_{0}\times_{\mathrm{Spec}(\mathbb{R})}\mathrm{\mathrm{Spec}(\mathbb{C})}$
of $X_{0}$, where $\mathrm{Spec}(\mathbb{C})\rightarrow\mathrm{Spec}(\mathbb{R})$
is the morphism induced by the usual inclusion $\mathbb{R}\hookrightarrow\mathbb{C}=\mathbb{R}[x]/(x^{2}+1)$.
We will often use the shorter notation $X{}_{0,\mathbb{C}}=X_{0}\otimes_{\mathbb{R}}\mathbb{C}$.
A complex variety of the form $X_{0,\mathbb{C}}$ is naturally endowed
with an additional action of the Galois group $\mathrm{Gal}(\mathbb{C}/\mathbb{R})=\mathbb{Z}_{2}$
given by the anti-regular involution $\mathrm{\sigma}=\mathrm{id}_{X_{0}}\times(x\mapsto-x)$,
which we call the \emph{real structure }on\emph{ $X_{0,\mathbb{C}}$. }

\subsection{Points and curves on surfaces }
\begin{notation}
\label{def:Rational_Points} Given a real algebraic surface $S$,
we denote by $S(\mathbb{R})$ and $S(\mathbb{C})$ the sets of $\mathbb{R}$-rational
and $\mathbb{C}$-rational points respectively. We always consider
$S(\mathbb{R})$ as a subset of $S(\mathbb{C})$ via the map induced
by the inclusion $\mathbb{R}\hookrightarrow\mathbb{C}$. In what follows,
the elements of $S(\mathbb{R})$, $S(\mathbb{C})$ and $S(\mathbb{C})\setminus S(\mathbb{R})$
will be frequently referred to as \emph{real points}, \emph{complex
points} and \emph{non-real points} of $S$ respectively.

The subset $(S_{\mathbb{C}}(\mathbb{C}))^{\sigma}$ of $S_{\mathbb{C}}(\mathbb{C})$
consisting of $\mathbb{C}$-rational points of $S_{\mathbb{C}}$ that
are fixed by the real structure $\sigma$ is called the \emph{real
locus} of $S_{\mathbb{C}}$, and we identify it with $S(\mathbb{R})$
in the natural way. 
\end{notation}
\noindent

By a \emph{curve} on a surface $S$ defined over $\mathbf{k}$, we mean a geometrically reduced closed sub-scheme $C\subset S$
of pure codimension $1$ defined over $\mathbf{k}$. We denote by $\mathbb{Z}\langle C\rangle$ the free abelian group generated by
the irreducible components of $C$.

\begin{defn}
\label{def:curve_SNC_div}   

1) A \emph{Smooth Normal Crossing $($SNC$)$ divisor} $B$ on a smooth surface $S$ defined over
$\mathbf{k}$ is a curve $B$ on $S$ whose base extension $B_{\overline{\mathbf{k}}}$ to the algebraic closure $\overline{\mathbf{k}}$ of $\mathbf{k}$ has smooth irreducible components
and ordinary double points only as singularities. Equivalently, for
every closed point $p\in B_{\overline{\mathbf{k}}}\subset S_{\overline{\mathbf{k}}}$, the local equations of the irreducible components of $B_{\overline{\mathbf{k}}}$ passing through $p$ form a part of a regular sequence
in the maximal ideal $\mathfrak{m}_{S_{\overline{\mathbf{k}}},p}$ of the local ring $\mathcal{O}_{S_{\overline{\mathbf{k}}},p}$
of $S_{\overline{\mathbf{k}}}$ at $p$. 

We say that $B$ is a \emph{strictly SNC divisor} if every two of the irreducible components of $B_{\overline{\mathbf{k}}}$ intersect in at most one point.
  
2) The \emph{dual graph} $\Gamma B=(\Gamma_{v}B,\Gamma_{e}B)$
of an SNC divisor $B$ on a smooth surface $S$ defined over an algebraically closed field is the graph with vertex set $\Gamma_{v}B$
the set of irreducible components of $B$ and with edges set $\Gamma_{e}B$ the set of double points of $B$. An edge in $\Gamma B$ connects
the two vertices which intersect in it. Note that $B$ is a strictly SNC divisor if and only if any two vertices of its dual graph are connected by at most one edge. 
\end{defn}

\begin{defn}
\label{def:rational_tree} Let $V$ be a smooth surface defined over
$\mathbf{k}$, and let $\overline{\mathbf{k}}$ be an algebraic closure
of $\mathbf{k}$.

1) A \emph{geometrically rational tree} on $V$ is an SNC divisor
$B$ defined over $\mathbf{k}$ such that every irreducible component
of $B_{\overline{\mathbf{k}}}\subset S_{\overline{\mathbf{k}}}$ is
a $\overline{\mathbf{k}}$-rational complete curve and the dual graph
$\Gamma(B_{\overline{\mathbf{k}}})$ is a tree. 

2) A \emph{geometrically rational chain} on $V$ is a geometrically
rational tree $B$ defined over $\mathbf{k}$ such that $\Gamma(B_{\overline{\mathbf{k}}})$ is a chain. 
The irreducible components $B_{0},\ldots,B_{r}$ of $B_{\overline{\mathbf{k}}}$ can be ordered
in such a way that $B_{i}\cdot B_{j}=1$ if $|i-j|=1$ and $0$ otherwise.
A geometrically rational chain $B$ with such an ordering on the set
of irreducible components of $B_{\overline{\mathbf{k}}}$ is said
to be \emph{oriented}. The components $B_{0}$ and $B_{r}$ are called
respectively the left and right boundaries of $B$, and we say by
extension that an irreducible component $B_{i}$ of $B_{\overline{\mathbf{k}}}$
is on the left of another one $B_{j}$ when $i<j$. The sequence of
self-intersections $[B_{0}^{2},\ldots,B_{r}^{2}]$ is called the \emph{type}
of the oriented geometrically rational chain $B$. 

An \emph{oriented} $\mathbf{k}$-\emph{subchain} of $B$ is a geometrically
rational chain $Z$ whose support is contained in that of $B$. We say
that an oriented geometrically rational chain $B$ is composed of
$\mathbf{k}$-subchains $Z_{1},\ldots,Z_{s}$ and we write $B=Z_{1}\vartriangleright\cdots\vartriangleright Z_{s}$
if the $Z_{i}$ are oriented $\mathbf{k}$-subchains of $B$ whose
union is $B$ and the irreducible components of $Z_{i,\overline{\mathbf{k}}}$
precede those of $Z_{j,\overline{\mathbf{k}}}$ for $i<j$. 
\end{defn}

\subsection{Birational maps and log-resolutions}

\indent\newline\indent Recall that the domain of definition of a rational
map $\varphi:X\dashrightarrow Y$ between two $\mathbf{k}$-schemes $X$ and $Y$ is the largest open subset $\mathrm{dom}_{\varphi}$ on which $\varphi$ is represented by a morphism. We say that $\varphi$ is
regular at a closed point $x$ if $x\in\mathrm{dom}_{\varphi}$. A
rational map $\varphi:X\dashrightarrow Y$ is called \emph{birational}
if it admits a rational inverse $\psi:Y\dashrightarrow X$.

\noindent In the sequel, we will frequently make use of the following
type of birational morphisms:
\begin{example}
\label{ex:subdiv_exp} (Subdivisional expansion of a surface at a
point \cite[$\S$ 2.4]{tDPe93}). Let $S$ be a smooth surface defined over $\mathbf{k}$ and
let $p\in S$ be a closed point with residue field $\kappa(p)$. A
\emph{subdivisionial expansion with center at} $p$ is a birational
morphism $\tau:S'\rightarrow S$ restricting to an isomorphism over
$S\setminus\{p\}$ and such that $\tau^{-1}(p)$ is a chain of smooth
$\kappa(p)$-rational curves, containing a unique irreducible component
$A_{0}(p)\simeq\mathbb{P}_{\kappa(p)}^{1}$ with normal bundle of
degree $-\deg\mathbb{\kappa}(p)/\mathbf{k}$. Given an ordered sequence
of regular parameters $(x_{-},x_{+})\in\mathfrak{m}_{S,p}$ in the
local ring $\mathcal{O}_{S,p}$ of $S$ at $p$, there exists a unique
pair of coprime integers $1\leq\mu_{-}\leq\mu_{+}$ such that $\tau:S'\rightarrow S$
coincides with the minimal resolution of the indeterminacies at $p$
of the rational map $x_{+}^{\mu_{+}}/x_{-}^{\mu_{-}}:S\dashrightarrow\mathbb{P}^{1}$ (see \cite[Theorem 2.6 (d)]{Za99}). For instance, the particular case $\mu_{\pm}=1$ is nothing but the
blow-up $\tau:S'=\mathrm{Proj}_{S}(\bigoplus_{n\geq0}\mathcal{I}_{p}^{n})\rightarrow S$
of $S$ at $p$, where $\mathcal{I}_{p}\subset\mathcal{O}_{S}$ denotes
the ideal sheaf of $p$. 

\begin{figure}[ht]
\centering 
\input{subdiv-f.tex} 
\caption{A subdivisional expansion at a point $p$. The non horizontal components represent the exceptional divisor of $\tau$.}  
\label{fig:subdiv}
\end{figure} 

In the sequel, we will mostly use such birational morphisms in the
particular case where $x_{-}$ and $x_{+}$ are the respective local
equations of integral curves $C_{-}$ and $C_{+}$ on $S$ intersecting
transversally at $p$. The integer $\mu_{\pm}$ is then equal to the
coefficient of $A_{0}(p)$ in the total transform of $C_{\pm}$ and
will say that $\tau:S'\rightarrow S$ is the subdivisional expansion
of $S$ at the $\kappa(p)$-rational point $(C_{-}\cap C_{+})_{p}$,
with multiplicities $(\mu_{-},\mu_{+})$.\end{example}

\begin{convention} In the rest of the article, we will often use the same notation for a divisor on a surface and its proper transform on another surface by a birational map. 
\end{convention}

\begin{defn}
A $\mathbf{k}$-variety $X$ is called $\mathbf{k}$\emph{-rational}
if there exist a birational map $\varphi:\mathbb{P}_{\mathbf{k}}^{\dim X}\dashrightarrow X$.
A geometrically reduced $\mathbf{k}$-scheme of finite type $X$ is
called \emph{geometrically rational} if every irreducible component
of $X_{\overline{\mathbf{k}}}$ is $\overline{\mathbf{k}}$-rational,
where $\overline{\mathbf{k}}$ denotes an algebraic closure of $\mathbf{k}$. 
\end{defn}
\noindent

\begin{defn}
An SNC (resp. strictly SNC) divisor $B$ on a smooth complete surface $V$ defined over
$\mathbf{k}$ is said to be \emph{SNC-minimal} (resp. \emph{strictly SNC-minimal}) over $\mathbf{k}$
if there does not exist any projective strictly birational morphism
$\tau:V\rightarrow V'$ onto a smooth surface defined over $\mathbf{k}$
with exceptional locus contained in $B$ such that $\tau_{*}(B)$
is SNC (resp. stricly SNC). 
\end{defn}

\begin{example}
Let $V$ be a smooth projective surface defined over $\mathbb{R}$
and let $B_{0}$ and $\overline{B}_{0}$ be a pair of smooth $\mathbb{C}$-rational
curves in $V_{\mathbb{C}}$ exchanged by the real structure $\sigma$
and intersecting transversally in a single point. Then $B_{0}\cup\overline{B}_{0}$
is the complexification of a geometrical rational chain $B$ on $V$
which is SNC minimal over $\mathbb{R}$ even if $B_{0}^{2}=\overline{B}_{0}^{2}=-1$. 
\end{example}

\begin{defn} \label{defn:log-res}
Let $(S,C)$ be a pair consisting of a smooth surface $S$ and a curve $C\subset S$ defined over $\mathbf{k}$. A \emph{log-resolution} of $(S,C)$ is a projective birational morphism $\tau:S'\rightarrow S$ defined over $\mathbf{k}$ such that $S'$ is smooth and the union $C'$ of the reduced total transform $\tau^{-1}(C)$ of $C$ with the exceptional locus $\mathrm{Ex}(\tau)$ of $\tau$ is an SNC divisor on $S'$. We say that $\tau:(S',C')\rightarrow (S,C)$ is a \emph{strict} log-resolution if $C'$ is strictly SNC.   
\end{defn}

\subsection{Smooth projective completions and logarithmic Kodaira dimension}

\indent\newline\noindent By virtue of Nagata compactification Theorem \cite{Na62}
and classical desingularization theorems (see e.g. \cite{Wa35}), every smooth
surface $S$ defined over $\mathbf{k}$ admits an open immersion $S\hookrightarrow V$
into a smooth projective surface with SNC boundary divisor $B=V\setminus S$,
both defined over $\mathbf{k}$. In what follows the term \emph{smooth projective completion} of a surface $S$ will be used to refer to any pair $(V,B)$ consisting of a smooth projective surface $V$ and a reduced SNC divisor $B$ on it such that $V\setminus B$ is isomorphic to $S$. A smooth projective completion $(V,B)$ of $S$ will be called SNC-minimal if $B$ is an SNC-minimal divisor on $V$.

The \emph{(logarithmic) Kodaira dimension}
$\kappa(S)$ of $S$ is then defined as the Iitaka dimension \cite{Ii70}
of the pair $(V;\omega_{V}(\log B))$ where $\omega_{V}(\log B)=(\det\Omega_{V/\mathbf{k}}^{1})\otimes\mathcal{O}_{V}(B)$.
The so-defined element $\kappa(S)\in\{-\infty,0,1,2\}$ is independent
of the choice of a smooth complete model $(V,B)$ \cite{Ii77}, and
it coincides with the usual notion of Kodaira dimension in the case
where $S$ is already complete.
Furthermore, it is invariant under arbitrary extensions of the base
field $\mathbf{k}$, as a consequence of the flat base change theorem
\cite[Proposition III.9.3]{Ha77}. 
A surface of Kodaira dimension $2$ is usually said to be of \emph{general
type}.

\subsection{Euclidean topology}

\indent\newline\noindent Recall that when $\mathbf{k}=\mathbb{R}$
or $\mathbb{C}$, the set of $\mathbf{k}$-rational point of a $\mathbf{k}$-variety
$X$ can be endowed with the euclidean topology. Namely, every $\mathbf{k}$-rational
point $p$ admits an affine open neighborhood $U_{p}$ and the choice
of a closed immersion $j:U_{p}\hookrightarrow\mathbb{A}_{\mathbf{k}}^{N}$
enables to equip $j(U_{p}(\mathbf{k}))$ with the subspace topology
induced by the usual euclidean topology on $\mathbb{A}_{\mathbf{k}}^{N}(\mathbf{k})\simeq\mathbf{k}^{N}$.
The so-constructed topology on $X(\mathbf{k})$ is well-defined and
independent of the choices made \cite[Lemme 1 and Proposition 2]{Se55}.
When $X$ is smooth, $X(\mathbf{k})$ is a  $\mathcal{C}^{\infty}$-manifold when equipped with the structure locally inherited by the standard $\mathcal{C}^{\infty}$-structure on $\mathbf{k}^{N}$.
\begin{convention}
Given a real algebraic variety $X$, we always consider the sets $X(\mathbb{R})$
and $X_{\mathbb{C}}(\mathbb{C})$ as equipped with their respective
euclidean topologies. The real structure $\sigma$ on $X_{\mathbb{C}}$
is in particular a continuous involution of $X_{\mathbb{C}}(\mathbb{C})$,
and we consider $X(\mathbb{R})$ as a subspace of $X_{\mathbb{C}}(\mathbb{C})$
via its identification with the set $X_{\mathbb{C}}^{\sigma}(\mathbb{C})$
of fixed points of $\sigma$. 
\end{convention}
Recall that given a coefficient ring $A$, a topological manifold
$M$ is called $A$-\emph{acyclic} if all its homology groups $H_{i}(M;A)$,
$i\geq1$, are trivial. Recall the following classical topological characterization
of $\mathbb{R}^{2}$ as a smooth manifold:
\begin{prop}
\label{prop:TopCharac_Euc_Plane} A smooth $2$-dimensional real manifold $M$  is diffeomorphic to $\mathbb{R}^{2}$ if
and only if it is connected and $\mathbb{Z}_{2}$-acylic. 
\end{prop}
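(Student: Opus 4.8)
The plan is to reduce the statement to the topological classification of compact surfaces with boundary. The direct implication is immediate: if $M$ is diffeomorphic to $\mathbb{R}^2$, then it is connected and contractible, hence $\mathbb{Z}_2$-acyclic. For the converse, I would first record that the hypotheses make $\overline{M}$ a compact connected smooth surface with boundary, having $M$ as its interior and the connected frontier $\overline{M}\setminus M$ as its boundary; connectedness of the latter means the boundary is a single circle. By the collar neighbourhood theorem the inclusion $M\hookrightarrow\overline{M}$ is a homotopy equivalence, so $H_\ast(\overline{M};\mathbb{Z}_2)$ is isomorphic to $H_\ast(M;\mathbb{Z}_2)$ and $\overline{M}$ is in turn connected and $\mathbb{Z}_2$-acyclic.

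Next I would use the fact that a compact connected surface with non-empty boundary deformation retracts onto a finite connected graph (its spine), and is therefore homotopy equivalent to a wedge of $r$ circles with $r=1-\chi(\overline{M})=b_1(\overline{M})$. Consequently $H_1(\overline{M};\mathbb{Z}_2)$ is isomorphic to $(\mathbb{Z}_2)^{r}$ while $H_i(\overline{M};\mathbb{Z}_2)=0$ for $i\geq 2$ (the top group vanishing because the boundary is non-empty). The $\mathbb{Z}_2$-acyclicity then forces $r=0$, that is $\chi(\overline{M})=1$, so that $\overline{M}$ is contractible.

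Finally I would appeal to the classification of compact surfaces with boundary. A compact connected surface with a single boundary circle is either orientable of genus $g$, with $\chi=1-2g$, or non-orientable of genus $k\geq 1$, with $\chi=1-k$; in both families the condition $\chi=1$ selects exactly the orientable case $g=0$ and excludes every non-orientable type. Hence $\overline{M}$ is the closed disc $D^2$, and $M=\mathrm{int}(D^2)$ is diffeomorphic to $\mathbb{R}^2$, as desired. The only genuinely delicate point is the opening step, namely confirming that relative compactness of $M$ together with smoothness and connectedness of its frontier really endows $\overline{M}$ with a compact surface-with-boundary structure whose interior is $M$ and whose boundary is a single circle; once this is secured, the role of the $\mathbb{Z}_2$ coefficients is precisely to kill the genus in both the orientable and the non-orientable families, and the rest is the standard Euler characteristic bookkeeping.
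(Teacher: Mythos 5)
Your proposal is correct, and it ends the same way the paper does: by invoking the classification of compact smooth surfaces with boundary. The difference lies in how each argument converts $\mathbb{Z}_2$-acyclicity into the input that classification needs. The paper stays on the open surface $M$ and runs the universal coefficient theorem: torsion-freeness of $H_2(M;\mathbb{Z})$ plus $b_2(M;\mathbb{Z}_2)=0$ force $b_2(M)=0$ and kill the $2$-torsion of $H_1(M;\mathbb{Z})$, whence $b_1(M)=b_1(M;\mathbb{Z}_2)=0$; it then cites the classification to conclude. You instead pass to $\overline{M}$ via a collar, use the spine of a compact surface with non-empty boundary (homotopy equivalence with a wedge of $r$ circles, so homology is free over any coefficients), and reduce everything to $\chi(\overline{M})=1$ together with a single boundary circle, after which the genus bookkeeping in both the orientable and non-orientable families pins down the disc. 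Your route buys a cleaner argument --- no torsion analysis is needed at all, since $H_1$ of a surface with boundary is automatically free --- and it makes explicit the final classification step that the paper leaves as a one-line citation; the paper's route buys independence from the spine fact, working with $M$ alone. Finally, the delicate point you flag at the end (that relative compactness plus the hypotheses really make $\overline{M}$ a compact smooth surface with boundary whose interior is $M$ and whose frontier is one circle) is exactly the assumption the paper's proof also makes, silently, when it applies the classification; flagging it is appropriate, and nothing in your argument otherwise depends on more than both proofs already presuppose.
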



\section{Algebro-topological characterizations of $\mathbb{Q}$-homology euclidean
planes } 

\begin{defn}
\label{def:hom_euc_plane} A \emph{homology} (resp. $\mathbb{Q}$-homology)
\emph{euclidean plane} is a smooth real algebraic surface
$S$ such that $S(\mathbb{R})$ is diffeomorphic to $\mathbb{R}^{2}$
and whose complexification $S_{\mathbb{C}}(\mathbb{C})$ is $\mathbb{Z}$-acyclic (resp. $\mathbb{Q}$-acyclic).
\end{defn}

Recall that by virtue of respective results of Fujita \cite{Fu82} and Gurjar-Pradeep-Shastri
\cite{GuP97,GuPS97}, a $\mathbb{Q}$-acyclic complex surface is affine and rational.
\begin{prop} \label{prop:hom_euc_plane_aff_rat} A $\mathbb{Q}$-homology euclidean plane is affine and $\mathbb{R}$-rational. 
\end{prop}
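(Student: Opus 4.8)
The plan is to handle affineness and $\mathbb{R}$-rationality separately, in each case reducing as far as possible to known statements about complex $\mathbb{Q}$-acyclic surfaces and then feeding in the hypothesis $S(\mathbb{R})\approx\mathbb{R}^{2}$ to control the real structure. Throughout I work with the complexification $S_{\mathbb{C}}$, its real structure $\sigma$, and a smooth projective model.

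For affineness, I would first note that affineness descends along the faithfully flat quasi-compact morphism $\mathrm{Spec}(\mathbb{C})\to\mathrm{Spec}(\mathbb{R})$, so that $S$ is affine as soon as $S_{\mathbb{C}}$ is affine. To see the latter, choose a smooth projective SNC compactification $S_{\mathbb{C}}\hookrightarrow(V,B)$ with $B=V\setminus S_{\mathbb{C}}$ defined over $\mathbb{C}$. The $\mathbb{Q}$-acyclicity of $S_{\mathbb{C}}$ has two consequences. First, since the lowest weight part of the mixed Hodge structure on $H^{1}(S_{\mathbb{C}};\mathbb{Q})$ is the image of the injection $H^{1}(V;\mathbb{Q})\hookrightarrow H^{1}(S_{\mathbb{C}};\mathbb{Q})$, we get $b_{1}(V)=0$, hence $q(V)=h^{1}(V,\mathcal{O}_{V})=0$. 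Second, the excision sequence $H_{B}^{2}(V;\mathbb{Q})\to H^{2}(V;\mathbb{Q})\to H^{2}(S_{\mathbb{C}};\mathbb{Q})=0$ shows that the classes of the irreducible components of $B$ span $\mathrm{NS}(V)\otimes\mathbb{Q}$. Therefore an ample class on $V$ is numerically equivalent to a $\mathbb{Q}$-divisor supported on $B$, so $B$ supports an ample $\mathbb{Q}$-divisor and $S_{\mathbb{C}}=V\setminus B$ is affine by Goodman's criterion. In practice I would simply invoke the known fact that complex $\mathbb{Q}$-homology planes are affine, the Hodge-theoretic sketch above serving only as justification.

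For $\mathbb{R}$-rationality I would first record that $S_{\mathbb{C}}$ is rational: this is the theorem of Gurjar and Shastri (resting on work of Fujita and of Miyanishi--Sugie) that every $\mathbb{Q}$-homology plane is a rational surface. In particular $S$ is geometrically rational. It then remains to descend rationality to $\mathbb{R}$, and this is where $S(\mathbb{R})\approx\mathbb{R}^{2}$ enters. Fix any smooth projective model $V$ of $S$ over $\mathbb{R}$; since $\mathbb{R}$-rationality is a birational invariant it suffices to prove $V$ is $\mathbb{R}$-rational. Now $V$ is geometrically rational and $V(\mathbb{R})\supseteq S(\mathbb{R})\neq\emptyset$, and I claim that $V(\mathbb{R})$ is connected. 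Indeed $V(\mathbb{R})$ is a smooth compact surface, whereas the real locus of the boundary $V\setminus S$ is contained in a real algebraic curve and so has dimension at most $1$ (compare Lemma \ref{lem:real_loc_rat_tree}); hence no connected component of the $2$-manifold $V(\mathbb{R})$ can be disjoint from $S(\mathbb{R})$, i.e. every component of $V(\mathbb{R})$ meets the open dense subset $S(\mathbb{R})$. Since $S(\mathbb{R})\approx\mathbb{R}^{2}$ is connected it lies in a single component, forcing $V(\mathbb{R})$ to be connected. A smooth projective geometrically rational real surface with non-empty connected real locus is $\mathbb{R}$-rational by Comessatti's theorem, which finishes the argument.

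The genuinely hard input is the rationality of complex $\mathbb{Q}$-homology planes, which I would cite rather than reprove: contrary to the projective case it cannot be read off from numerical invariants alone — the Ramanujam surface is rational yet of logarithmic Kodaira dimension $2$ — and its proof relies on the structure theory of open surfaces of non-negative logarithmic Kodaira dimension. By comparison the affineness statement is soft, and the passage from geometric rationality to $\mathbb{R}$-rationality is clean once one observes, as above, that the two-dimensionality of $V(\mathbb{R})$ against the one-dimensionality of the boundary real locus prevents the real locus from disconnecting, so that Comessatti's criterion applies verbatim.
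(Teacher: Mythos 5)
Your proof is correct and follows essentially the same route as the paper: both reduce affineness and geometric rationality to the known complex results (Fujita and Gurjar--Pradeep--Shastri, the latter being the precise attribution rather than Gurjar--Shastri), and then deduce $\mathbb{R}$-rationality from Comessatti's theorem (cited in the paper via Silhol and Koll\'ar) once the real locus of a smooth projective model is known to be non-empty and connected. Your dimension-count argument showing that every component of $V(\mathbb{R})$ must meet $S(\mathbb{R})$ is a valid filling-in of the connectedness claim that the paper asserts without detail.
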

\begin{proof}
Let $S$ be $\mathbb{Q}$-homology euclidean plane and let $(V,B)$ be a smooth projective completion of $S$ defined over $\mathbb{R}$. Then $V$ is geometrically rational, with non empty connected real locus $V(\mathbb{R})$ as $S(\mathbb{R})\approx \mathbb{R}^{2}$, hence is $\mathbb{R}$-rational by virtue of \cite{Co12} (see also \cite[Corollary~VI.6.5]{Sil89}). The $\mathbb{R}$-rationality of $S$ follows. 
\end{proof}

\subsection{Criteria for $\mathbb{Q}$-acyclicity and structure of the real locus }

Every smooth affine surface $S$ admits a smooth projective completion $(V,B)$ with geometrically connected
SNC boundary divisor $B$. The following well-known lemma (see e. g. \cite[Lemma 4.2.1]{MiyBook})
provides a characterization of the $\mathbb{Q}$-acyclicity of $S_{\mathbb{C}}(\mathbb{C})$
in terms of the geometry of $B$ and the properties of the natural
map $j_{\mathbb{C}}:\mathbb{Z}\langle B_{\mathbb{C}}\rangle\rightarrow\mathrm{Cl}(V_{\mathbb{C}})$
associating to an irreducible component of $B_{\mathbb{C}}$ its class
in the divisor class group $\mathrm{Cl}(V_{\mathbb{C}})$ of $V_{\mathbb{C}}$. 
\begin{lem}
\label{lem:Q_hom_alg_top_charac} Let $(V,B)$ be a smooth projective completion of a smooth complex surface $S$. Then $S(\mathbb{C})$ is $\mathbb{Q}$-acyclic if and only if $B$ is a rational
tree and the map $j\otimes\mathrm{id}:\mathbb{Z}\langle B\rangle\otimes_{\mathbb{Z}}\mathbb{Q}\rightarrow\mathrm{Cl}(V)\otimes_{\mathbb{Z}}\mathbb{Q}$ is an isomorphism. Furthermore, $S(\mathbb{C})$ is $\mathbb{Z}$-acyclic if and only if $j:\mathbb{Z}\langle B\rangle\rightarrow\mathrm{Cl}(V)$ is an isomorphism. 
\end{lem}

Not all smooth real algebraic surfaces $S$ with $\mathbb{Q}$-acyclic
complexifications $S_{\mathbb{C}}$ have their real locus diffeomorphic
to $\mathbb{R}^{2}$. For instance, the real locus of the complement
$S$ of a smooth conic $C$ in $\mathbb{P}_{\mathbb{R}}^{2}$ is either
diffeomorphic to $\mathbb{R}\mathbb{P}^{2}$ if $C(\mathbb{R})=\emptyset$
or to the disjoint union of $\mathbb{R}^{2}$ with a Möebius band otherwise.
In this context, the algebro-topological criterion of Lemma \ref{lem:Q_hom_alg_top_charac}
can be refined as follows: since the pair $(V_{\mathbb{C}},B_{\mathbb{C}})$
is defined over $\mathbb{R}$, the free abelian groups $\mathbb{Z}\langle B_{\mathbb{C}}\rangle$
and $\mathrm{Cl}(V_{\mathbb{C}})$ both inherits a structure of $G$-module
for the group $G=\{1,\sigma\}\simeq\mathbb{Z}_{2}$ generated by the
real structure $\sigma$ on $V_{\mathbb{C}}$\footnote{On $\mathrm{Cl}(V_{\mathbb{C}})$, we consider the natural action induced by $\sigma$: if $d$ is the class in $\mathrm{Cl}(V_{\mathbb{C}})$ of a real divisor $D$, then $\sigma(d)=d$. Recall that if $[D]$ is the fundamental class of $D$ in $H_{2}(V_{\mathbb{C}}(\mathbb{C});\mathbb{Z})$, then $\sigma_*([D])=-[D]$, in fact the cycle map $\mathrm{cl}:\mathrm{Cl}(V_{\mathbb{C}})\rightarrow H_{2}(V_{\mathbb{C}}(\mathbb{C});\mathbb{Z})$ is anti-equivariant.}. 
Furthermore, the complexification
of divisors gives rise to a homomorphism $\mathrm{Cl}(V)\rightarrow\mathrm{Cl}(V_{\mathbb{C}})$
whose image is contained in the subgroup $\mathrm{Cl}(V_{\mathbb{C}})^{\sigma}$ of $\sigma$-invariant classes. 
Recall that for every $G$-module $M$, the Galois cohomology groups
$H^{1}(G,M)=\mathrm{Ker}(\mathrm{id}_{M}+\sigma)/\mathrm{Im}(\mathrm{id}_{M}-\sigma)$
and $H^{2}(G,M)=\mathrm{Ker}(\mathrm{id}_{M}-\sigma)/\mathrm{Im}(\mathrm{id}_{M}+\sigma)$
are both $\mathbb{Z}_{2}$-vector spaces.
We have the following criterion: 
\begin{prop}
\label{prop:Galois_real_locus_charac} Let $(V,B)$ be a smooth projective completion defined over $\mathbb{R}$ of an $\mathbb{R}$-rational real algebraic surface $S$. Suppose that $B$ is a geometrically rational tree and let $j_{\mathbb{C}}:\mathbb{Z}\langle B_{\mathbb{C}}\rangle\rightarrow\mathrm{Cl}(V_{\mathbb{C}})$
be the natural homomorphism. Then the following hold:

1) $S(\mathbb{R})$ is diffeomorphic to $\mathbb{R}^{2}$ if and only
if $B(\mathbb{R})$ is non empty and the homomorphism $H^{2}(j_{\mathbb{C}}):H^{2}(G,\mathbb{Z}\langle B_{\mathbb{C}}\rangle)\rightarrow H^{2}(G,\mathrm{Cl}(V_{\mathbb{C}}))$
induced by $j_{\mathbb{C}}$ is an isomorphism. 

2) If in addition $\mathrm{Cl}(V)\rightarrow\mathrm{Cl}(V_{\mathbb{C}})$
is an isomorphism, then the second condition can be replaced by the
requirement that $j_{\mathbb{C}}\otimes\mathrm{id}:\mathbb{Z}\langle B_{\mathbb{C}}\rangle\otimes_{\mathbb{Z}}\mathbb{Z}_{2}\rightarrow\mathrm{Cl}(V_{\mathbb{C}})\otimes_{\mathbb{Z}}\mathbb{Z}_{2}$
is an isomorphism.
\end{prop}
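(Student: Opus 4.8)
The plan is to characterise $S(\mathbb{R})\cong\mathbb{R}^2$ by means of the topological criterion of Proposition \ref{prop:TopCharac_Euc_Plane} and then to translate the resulting $\mathbb{Z}_2$-homological conditions into the Galois-cohomological statement about $j_\mathbb{C}$. Since $V$ is smooth and projective over $\mathbb{R}$, its real locus $V(\mathbb{R})$ is a closed (compact, boundaryless) smooth surface, and $S(\mathbb{R})=V(\mathbb{R})\setminus B(\mathbb{R})$ is an open subsurface with complement $B(\mathbb{R})$. First I would dispose of the degenerate cases: if $B(\mathbb{R})=\emptyset$ then $S(\mathbb{R})=V(\mathbb{R})$ is compact and cannot be $\mathbb{R}^2$, which is why the non-emptiness of $B(\mathbb{R})$ must be imposed; and if $S(\mathbb{R})\cong\mathbb{R}^2$ then, being connected, it lies in a single component of $V(\mathbb{R})$, the remaining components being excluded since they would add closed pieces to $S(\mathbb{R})$, so that $V(\mathbb{R})$ is forced to be connected. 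By Lemma \ref{lem:real_loc_rat_tree}, $B(\mathbb{R})$ is then a nonempty connected $1$-complex (a point, or a connected union of circles glued along the real nodes of $B$). Passing to the complement of an open regular neighbourhood of $B(\mathbb{R})$ in $V(\mathbb{R})$, which is homotopy equivalent to $S(\mathbb{R})$, realises $S(\mathbb{R})$ as the interior of a compact surface with boundary; and an open connected surface which is moreover $\mathbb{Z}_2$-acyclic forces $H_1(S(\mathbb{R});\mathbb{Z})$, which is free abelian, to vanish, hence is contractible and therefore diffeomorphic to $\mathbb{R}^2$ by the classification of surfaces. This reduces assertion 1) to the equivalence: $S(\mathbb{R})$ is connected and $\mathbb{Z}_2$-acyclic if and only if $H^2(j_\mathbb{C})$ is an isomorphism.

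Next I would set up the homological bookkeeping exactly as in the proof of Lemma \ref{lem:Q_hom_alg_top_charac}, but for the real locus and with $\mathbb{Z}_2$-coefficients. Poincar\'e--Lefschetz duality on the closed surface $V(\mathbb{R})$ gives $H_i(V(\mathbb{R}),B(\mathbb{R});\mathbb{Z}_2)\cong H^{2-i}(S(\mathbb{R});\mathbb{Z}_2)$, and feeding this into the long exact sequence of the pair $(V(\mathbb{R}),B(\mathbb{R}))$ expresses the vanishing of $H_1(S(\mathbb{R});\mathbb{Z}_2)$ and $H_2(S(\mathbb{R});\mathbb{Z}_2)$, together with the connectedness of $S(\mathbb{R})$, in terms of the map $H_*(B(\mathbb{R});\mathbb{Z}_2)\to H_*(V(\mathbb{R});\mathbb{Z}_2)$ induced by the inclusion.

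The crux, and the step I expect to be the main obstacle, is to identify this purely topological map with $H^2(j_\mathbb{C})$. Here I would use that $V_\mathbb{C}$ is rational, so that $p_g(V_\mathbb{C})=q(V_\mathbb{C})=0$, $H^{\mathrm{odd}}(V_\mathbb{C}(\mathbb{C});\mathbb{Z})=0$, and the cycle class map $\mathrm{cl}\colon\mathrm{Cl}(V_\mathbb{C})\to H_2(V_\mathbb{C}(\mathbb{C});\mathbb{Z})$ is a $G$-equivariant isomorphism (as already recorded in Lemma \ref{lem:Q_hom_alg_top_charac}). The Borel--Haefliger cycle class map, sending a curve defined over $\mathbb{R}$ to the $\mathbb{Z}_2$-fundamental class of its real locus, provides canonical homomorphisms from the Galois cohomology of the complex divisorial data to the $\mathbb{Z}_2$-cohomology of the real loci; the key comparison is that for a geometrically rational surface with nonempty real locus these induce an isomorphism $H^2(G,\mathrm{Cl}(V_\mathbb{C}))\cong H^1(V(\mathbb{R});\mathbb{Z}_2)$, and similarly that $H^2(G,\mathbb{Z}\langle B_\mathbb{C}\rangle)$ is computed by the real loci of the $\sigma$-invariant components of $B$. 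Assembling these identifications into a commutative ladder between the Galois-cohomology sequence attached to $j_\mathbb{C}$ and the $\mathbb{Z}_2$-homology sequence of $(V(\mathbb{R}),B(\mathbb{R}))$, a diagram chase shows that the connectedness and $\mathbb{Z}_2$-acyclicity of $S(\mathbb{R})$ hold precisely when $H^2(j_\mathbb{C})$ is an isomorphism, finishing 1). The delicate point is to prove this comparison and its compatibility with $j_\mathbb{C}$ with enough precision, since the conjugate pairs of components of $B_\mathbb{C}$ influence the euclidean geometry of $B(\mathbb{R})$ through their real intersection points but are invisible to $H^2(G,-)$, so the bookkeeping must separate the $\sigma$-invariant components from the induced summands $\mathbb{Z}[G]$ coming from conjugate pairs.

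For 2), the hypothesis that $\mathrm{Cl}(V)\to\mathrm{Cl}(V_\mathbb{C})$ is an isomorphism forces $\sigma$ to act trivially on $\mathrm{Cl}(V_\mathbb{C})$, whence $H^2(G,\mathrm{Cl}(V_\mathbb{C}))\cong\mathrm{Cl}(V_\mathbb{C})\otimes_\mathbb{Z}\mathbb{Z}_2$. Decomposing $\mathbb{Z}\langle B_\mathbb{C}\rangle$ as a $G$-module into the trivial summands attached to the $\mathbb{R}$-rational components of $B$ and the induced summands $\mathbb{Z}[G]$ attached to the conjugate pairs, on which $H^2(G,-)$ vanishes, one identifies $H^2(j_\mathbb{C})$ with the $\mathbb{Z}_2$-reduction of the restriction of $j_\mathbb{C}$ to the $\sigma$-invariant components. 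When every irreducible component of $B_\mathbb{C}$ is $\sigma$-invariant this reduction coincides with $j_\mathbb{C}\otimes\mathrm{id}$, so that the condition of 1) reads exactly that $j_\mathbb{C}\otimes\mathrm{id}\colon\mathbb{Z}\langle B_\mathbb{C}\rangle\otimes_\mathbb{Z}\mathbb{Z}_2\to\mathrm{Cl}(V_\mathbb{C})\otimes_\mathbb{Z}\mathbb{Z}_2$ be an isomorphism, as asserted.
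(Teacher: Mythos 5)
Your overall architecture coincides with the paper's: reduce to the topological criterion of Proposition \ref{prop:TopCharac_Euc_Plane}, run the $\mathbb{Z}_{2}$-homology exact sequence of the pair $(V(\mathbb{R}),B(\mathbb{R}))$ against Poincar\'e duality, and identify the inclusion map $i_{\mathbb{R}}\colon H_{1}(B(\mathbb{R});\mathbb{Z}_{2})\rightarrow H_{1}(V(\mathbb{R});\mathbb{Z}_{2})$ with $H^{2}(j_{\mathbb{C}})$ via Borel--Haefliger cycle classes. But there is a genuine gap exactly at the step you yourself flag as ``the main obstacle'': the comparison isomorphism $H^{2}(G,\mathrm{Cl}(V_{\mathbb{C}}))\simeq H_{1}(V(\mathbb{R});\mathbb{Z}_{2})$ is asserted, not proved, and it is not a formal statement that a diagram chase can deliver. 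What the Borel--Haefliger construction gives formally is only a homomorphism. The paper obtains the isomorphism from two nontrivial external inputs: (i) surjectivity of $\mathrm{cl}\colon\mathrm{Cl}(V)\rightarrow H_{1}(V(\mathbb{R});\mathbb{Z}_{2})$ when $p_{g}(V)=q(V)=0$, together with surjectivity of $\mathrm{Cl}(V)\rightarrow\mathrm{Cl}(V_{\mathbb{C}})^{\sigma}$ when $V(\mathbb{R})\neq\emptyset$ (both from \cite{Sil89}), which yield a surjection $H^{2}(G,\mathrm{Cl}(V_{\mathbb{C}}))\rightarrow H_{1}(V(\mathbb{R});\mathbb{Z}_{2})$; and (ii) Krasnov's theorem \cite{Kr83}, $b_{1}(V(\mathbb{R});\mathbb{Z}_{2})=\dim_{\mathbb{Z}_{2}}H^{2}(G,\mathrm{Cl}(V_{\mathbb{C}}))$ when $V(\mathbb{R})\neq\emptyset$ and $b_{1}(V_{\mathbb{C}}(\mathbb{C});\mathbb{Z}_{2})=0$, which upgrades that surjection to an isomorphism by a dimension count. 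Injectivity is the hard half, and nothing in your sketch supplies it. (A secondary point: in the direction where one deduces $S(\mathbb{R})\approx\mathbb{R}^{2}$ from the two conditions, the connectedness of $V(\mathbb{R})$ must be fed into the exact sequence; it comes from the $\mathbb{R}$-rationality hypothesis, not from $S(\mathbb{R})\approx\mathbb{R}^{2}$, which is of course unavailable there.)

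Assertion 2) is also left incomplete: you establish the equivalence only ``when every irreducible component of $B_{\mathbb{C}}$ is $\sigma$-invariant'', but that is not among the hypotheses. The hypothesis $\mathrm{Cl}(V)\stackrel{\sim}{\rightarrow}\mathrm{Cl}(V_{\mathbb{C}})$ gives directly that $\sigma$ acts trivially on $\mathrm{Cl}(V_{\mathbb{C}})$, hence $H^{2}(G,\mathrm{Cl}(V_{\mathbb{C}}))=\mathrm{Cl}(V_{\mathbb{C}})\otimes_{\mathbb{Z}}\mathbb{Z}_{2}$; to bring in the condition that $j_{\mathbb{C}}\otimes\mathrm{id}$ be an isomorphism you must still rule out conjugate pairs of components of $B_{\mathbb{C}}$. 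This can be done in the direction that matters: if $B_{1}\neq\sigma(B_{1})$ were such a pair, then triviality of the $\sigma$-action on $\mathrm{Cl}(V_{\mathbb{C}})$ forces $j_{\mathbb{C}}(B_{1})=j_{\mathbb{C}}(\sigma(B_{1}))$, so $B_{1}+\sigma(B_{1})$ lies in the kernel of $j_{\mathbb{C}}\otimes\mathrm{id}$, contradicting injectivity; hence all components are $\sigma$-invariant, $H^{2}(j_{\mathbb{C}})$ identifies with $j_{\mathbb{C}}\otimes\mathrm{id}$, and 1) applies. Your observation that conjugate pairs are invisible to $H^{2}(G,-)$ is exactly the right ingredient, but it must be converted into this argument rather than promoted to a standing assumption.
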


\begin{proof}
Because $V$ is $\mathbb{R}$-rational, the cycle map $\mathrm{cl}:\mathrm{Cl}(V_{\mathbb{C}})\rightarrow H_{2}(V_{\mathbb{C}}(\mathbb{C});\mathbb{Z})$ is an isomorphism by the Lefschetz Theorem on $(1,1)$-cycles and we get an isomorphism between $\mathrm{Cl}(V)$ and the group $H_{2}(V_{\mathbb{C}}(\mathbb{C});\mathbb{Z})^{-\sigma}$ of anti-invariant classes.
The Borel-Haefliger homomorphism, \cite[$\S$5]{BH61}, induces an isomorphism between $H^{2}(G,\mathbb{Z}\langle C_{\mathbb{C}}\rangle)$ and $H_{1}(C(\mathbb{R});\mathbb{Z}_{2})$ for any geometrically rational curve $C$ with non empty real locus, hence 
for any geometrically rational tree with non empty real locus. As a consequence, passing to a minimal model, we get an isomorphism between $H^{2}(G,\mathrm{Cl}(V_{\mathbb{C}}))$ and $H_{1}(V(\mathbb{R});\mathbb{Z}_{2})$ if $V$ is $\mathbb{R}$-rational (see e. g. \cite[Proposition~3.2 and Theorem~3.4]{Sil89} for further details).

From Proposition \ref{prop:TopCharac_Euc_Plane}, we have
that $S(\mathbb{R})\approx \mathbb{R}^{2}$
if and only if $H^{*}(S(\mathbb{R});\mathbb{Z}_{2})=\mathbb{Z}_{2}$.
The long exact sequence of homology for the pair $(V(\mathbb{R}),B(\mathbb{R}))$
together with Poincar\'e duality 
$$
H_{i}(V(\mathbb{R}),B(\mathbb{R});\mathbb{Z}_{2})\simeq H^{2-i}(S(\mathbb{R});\mathbb{Z}_{2})
$$
yields the exact sequence 
\begin{eqnarray*}
0 & \rightarrow & H_{2}(V(\mathbb{R});\mathbb{Z}_{2})\rightarrow H^{0}(S(\mathbb{R});\mathbb{Z}_{2})\rightarrow H_{1}(B(\mathbb{R});\mathbb{Z}_{2})\stackrel{i_{\mathbb{R}}}{\rightarrow}H_{1}(V(\mathbb{R});\mathbb{Z}_{2})\rightarrow H^{1}(S(\mathbb{R});\mathbb{Z}_{2})\rightarrow\\
 & \rightarrow & H_{0}(B(\mathbb{R});\mathbb{Z}_{2})\rightarrow H_{0}(V(\mathbb{R});\mathbb{Z}_{2})\rightarrow H^{2}(S(\mathbb{R});\mathbb{Z}_{2})\rightarrow0,
\end{eqnarray*}
Again, the $\mathbb{R}$-rationality of $V$ implies that $V(\mathbb{R})$
is non empty and connected, so that $H_{0}(V(\mathbb{R});\mathbb{Z}_{2})\simeq H_{2}(V(\mathbb{R});\mathbb{Z}_{2})=\mathbb{Z}_{2}$. 
Furthermore, $B$ is a geometrically rational
tree in a smooth real algebraic surface, hence it follows from classification of involutions on a tree and classification of real structures on $\mathbb{P}^{1}_{\mathbb{C}}$ that $B(\mathbb{R})$ 
is either empty, or a point or a connected union of curves homeomorphic
to $S^1$. So either $H_{0}(B(\mathbb{R});\mathbb{Z}_{2})=0$
if $B(\mathbb{R})$ is empty, and then $H^{2}(S(\mathbb{R});\mathbb{Z}_{2})=\mathbb{Z}_{2}$,
or the map $H_{0}(B(\mathbb{R});\mathbb{Z}_{2})\rightarrow H_{0}(V(\mathbb{R});\mathbb{Z}_{2})$
is an isomorphism. 
We conclude
that $S(\mathbb{R})\approx \mathbb{R}^{2}$
if and only if $B(\mathbb{R})$ is not empty and $i_{\mathbb{R}}$
is an isomorphism. The first assertion is then a consequence of 
 the following commutative diagram (vertical isomorphism on the left is the Borel-Haefliger homomorphism)
\begin{eqnarray*}
H_{1}(B(\mathbb{R});\mathbb{Z}_{2}) & \stackrel{i_{\mathbb{R}}}{\longrightarrow} & H_{1}(V(\mathbb{R});\mathbb{Z}_{2})\\
\uparrow\wr &  & \uparrow\wr\\
H^{2}(G,\mathbb{Z}\langle B_{\mathbb{C}}\rangle) & \stackrel{H^{2}(j_{\mathbb{C}})}{\longrightarrow} & \mathrm{H^{2}(G,\mathrm{Cl}(V_{\mathbb{C}}))}.
\end{eqnarray*}
For the second assertion, it is enough to observe that if $\mathrm{Cl}(V)\rightarrow\mathrm{Cl}(V_{\mathbb{C}})$
is an isomorphism then $\mathrm{Cl}(V_{\mathbb{C}})=\mathrm{Cl}(V_{\mathbb{C}})^{\sigma}$,
$(\mathrm{id}_{\mathrm{Cl}(V_{\mathbb{C}})}+\sigma)\mathrm{Cl}(V_{\mathbb{C}})=2\mathrm{Cl}(V_{\mathbb{C}})$
and so $H^{2}(G,\mathrm{Cl}(V_{\mathbb{C}}))=\mathrm{Cl}(V_{\mathbb{C}})\otimes_{\mathbb{Z}}\mathbb{Z}_{2}$.
For the same reason, $H^{2}(G,\mathbb{Z}\langle B_{\mathbb{C}}\rangle)=\mathbb{Z}\langle B_{\mathbb{C}}\rangle\otimes_{\mathbb{Z}}\mathbb{Z}_{2}$
and the conclusion follows. 
\end{proof}

\subsection{\label{sub:Curves-arrangments} $\mathbb{Q}$-homology euclidean
planes obtained from rational plane curves arrangements }

Here we setup a real counterpart of a general blow-up construction
already used by tom Dieck and Petrie \cite{tDPe93} in the complex
case which leads to a rough description of $\mathbb{Q}$-homology
euclidean planes in terms of a datum consisting of a suitable arrangement
$D$ of  rational curves in $\mathbb{P}_{\mathbb{R}}^{2}$
and a subtree $B$ of the total transform of $D$ in a log-resolution $\tau:V\rightarrow\mathbb{P}_{\mathbb{R}}^{2}$
of the pair $(\mathbb{P}_{\mathbb{R}}^{2},D)$. This construction
will be refined later on in subsection \ref{sub:Cycle-cutting-General-Type}
to describe in a more precise fashion the structure of homology euclidean
planes of general type. \\

\subsubsection{\label{par:curve_arrangement_setup}\noindent }

Let $\mathbf{k}=\mathbb{R}$ or $\mathbb{C}$ and let $(V_{0},D_0)$ be a pair consisting of a smooth $\mathbf{k}$-rational
projective surface $V_{0}$ and a reduced curve $D_0$ defined over $\mathbf{k}$, with geometrically
rational irreducible components. Let $\tau:(V,D)\rightarrow (V_0,D_0)$, where $D=\tau^{-1}(D_0)$ 
be a strict log-resolution of $(V_{0},D_0)$ such that the image of the exceptional locus 
of $\tau$ is contained in $D_0$. 

Now suppose that there exists a geometrically rational subtree $B\subset D$ defined over $\mathbf{k}$
satisfying the following properties:

a) The support of $B$ contains the proper transform $\tau_{*}^{-1}D_0$
of $D_0$,

b) $\mathrm{rk}(\mathbb{Z}\langle D_{\mathbb{C}}\rangle)-\mathrm{rk}(\mathbb{Z}\langle B_{\mathbb{C}}\rangle)=\mathrm{rk}(\mathbb{Z}\langle D_{0,\mathbb{C}}\rangle)-\mathrm{rk}(\mathrm{Cl}(V_{0,\mathbb{C}}))$.

By assumption, the set $\mathcal{E}_{0}$ of irreducible components
of $D_{\mathbb{C}}$ not contained in the support of $B_{\mathbb{C}}$
is a subset of the set $\mathcal{E}$ of exceptional divisors of $\tau_{\mathbb{C}}:V_{\mathbb{C}}\rightarrow V_{0,\mathbb{C}}$. 
Letting $\mathcal{E}_{1}=\mathcal{E}\setminus\mathcal{E}_{0}$, we have natural identifications $ \mathbb{Z}\langle D_{\mathbb{C}}\rangle=\mathbb{Z}\langle B_{\mathbb{C}}\rangle \oplus \mathbb{Z}\langle\mathcal{E}_{0}\rangle$, \[\mathbb{Z}\langle B_{\mathbb{C}}\rangle=\mathbb{Z}\langle\tau_{*}^{-1}D_{0,\mathbb{C}}\rangle\oplus\mathbb{Z}\langle\mathcal{E}_{1}\rangle= \mathbb{Z}\langle D_{0,\mathbb{C}}\rangle\oplus\mathbb{Z}\langle\mathcal{E}_{1}\rangle\]  and  $\mathrm{Cl}(V_{\mathbb{C}})=\mathrm{Cl}(V_{0,\mathbb{C}})\oplus\mathbb{Z}\langle\mathcal{E}_{1}\rangle\oplus\mathbb{Z}\langle\mathcal{E}_{0}\rangle$. We let $R$ be the kernel of \[\pi:=\mathrm{pr}_{\mathrm{Cl}(V_{0,\mathbb{C}})\oplus\mathbb{Z}\langle\mathcal{E}_{1}\rangle}\circ j_{\mathbb{C}}=d_{\mathbb{C}}\oplus\mathrm{id}_{\mathbb{Z}\langle\mathcal{E}_{1}\rangle}:\mathbb{Z}\langle B_{\mathbb{C}}\rangle=\mathbb{Z}\langle D_{0,\mathbb{C}}\rangle\oplus\mathbb{Z}\langle\mathcal{E}_{1}\rangle \rightarrow \mathrm{Cl}(V_{0,\mathbb{C}})\oplus\mathbb{Z}\langle\mathcal{E}_{1}\rangle,\] and we let $\varphi=\mathrm{pr}_{\mathbb{Z}\langle\mathcal{E}_{0}\rangle} \circ j_{\mathbb{C}}|_R:R \rightarrow \mathbb{Z}\langle\mathcal{E}_{0}\rangle $. 

When $\mathbf{k}=\mathbb{R}$, the fact that $D_0$ and $B$ are defined
over $\mathbb{R}$ guarantees that $R$, $\mathbb{Z}\langle\mathcal{E}_{0}\rangle$ and $\mathbb{Z}\langle\mathcal{E}_{1}\rangle$
have the additional structures of $G$-modules for the Galois group
$G=\{1,\sigma\}\simeq\mathbb{Z}_{2}$ generated by the real structure
$\sigma$ on $V_{\mathbb{C}}$ and that $\pi$ and $\varphi$ are homomorphisms
of $G$-module. 

\begin{lem}
\label{lem:plane_arrangement_top_charac}\label{Rk:Arrangement-top-carac-gen} $($See also \cite[3.6-3.9]{tDPe93}$)$
Let $(V_{0},D_0)$ be a pair consisting of a smooth $\mathbf{k}$-rational
projective surface $V_{0}$ and a reduced curve $D_0$ defined over $\mathbf{k}$, with geometrically
rational irreducible components. Let $\tau:(V,D)\rightarrow (V_0,D_0)$
be a strict log-resolution of $(V_{0},D_0)$ such that the image of the exceptional locus 
of $\tau$ is contained in $D_0$, and let  $B\subset D$ be a geometrically rational subtree defined over $\mathbf{k}$ satisfying conditions a) and b) above. Then the following hold for the smooth quasi-projective surface $S=V\setminus B$:

a) $S_{\mathbb{C}}(\mathbb{C})$ is $\mathbb{Q}$-acyclic if and only if $d_{\mathbb{C}}:\mathbb{Z}\langle D_{0,\mathbb{C}}\rangle\otimes_{\mathbb{Z}}\mathbb{Q}\rightarrow\mathrm{Cl}(V_{0,\mathbb{C}})\otimes_{\mathbb{Z}}\mathbb{Q}$ is surjective and
$\varphi\otimes\mathrm{id}:R\otimes_{\mathbb{Z}}\mathbb{Q}\rightarrow\mathbb{Z}\langle\mathcal{E}_{0}\rangle\otimes_{\mathbb{Z}}\mathbb{Q}$
is an isomorphism.

b) $S_{\mathbb{C}}(\mathbb{C})$ is $\mathbb{Z}$-acylic if and only
if $d_{\mathbb{C}}:\mathbb{Z}\langle D_{0,\mathbb{C}}\rangle\rightarrow\mathrm{Cl}(V_{0,\mathbb{C}})$
is surjective and $\varphi:R\rightarrow\mathbb{Z}\langle\mathcal{E}_{0}\rangle$
is an isomorphism. 

c) If $\mathbf{k}=\mathbb{R}$ then $S(\mathbb{R})$ is diffeomorphic
to $\mathbb{R}^{2}$ if and only if $H^{2}(d_{\mathbb{C}}):H^{2}(G,\mathbb{Z}\langle D_{0,\mathbb{C}}\rangle)\rightarrow H^{2}(G,\mathrm{Cl}(V_{0,\mathbb{C}}))$ is surjective and $H^{2}(\varphi):H^{2}(G,R)\rightarrow H^{2}(G,\mathbb{Z}\langle\mathcal{E}_{0}\rangle)$
is an isomorphism. Furthermore, when $d:\mathbb{Z}\langle D_0\rangle\rightarrow\mathrm{Cl}(V_0)$ is surjective and $\mathrm{Cl}(V)\rightarrow\mathrm{Cl}(V_{\mathbb{C}})$ is an isomorphism, the second condition is satisfied if and only
if $\varphi\otimes\mathrm{id}:R\otimes_{\mathbb{Z}}\mathbb{Z}_{2}\rightarrow\mathbb{Z}\langle\mathcal{E}_{0}\rangle\otimes_{\mathbb{Z}}\mathbb{Z}_{2}$ is an isomorphism.
\end{lem}
\begin{proof}
Since a homomorphism of modules $f:M\rightarrow M'\oplus M''$ is an isomorphism if and only if the projection $\mathrm{pr}_{M'}\circ f$ is surjective and the kernel of this projection is mapped isomorphically onto $M''$ by the restriction of $\mathrm{pr}_{M''}\circ f$, the assertions are straightforward consequences of Lemma \ref{lem:Q_hom_alg_top_charac} and Proposition \ref{prop:Galois_real_locus_charac}, applied to $j_{\mathbb{C}}\otimes \mathrm{id}$,  $j_{\mathbb{C}}$ and $H^2(j_{\mathbb{C}})$.  
\end{proof}

\begin{prop}
\label{thm:plan_arrangement_structure} $($See also \cite[Proposition 2.2]{tDPe93}$)$. Let $\mathbf{k}=\mathbb{R}$
or $\mathbb{C}$ and let $S$ be a smooth $\mathbf{k}$-rational surface
such that $S_{\mathbb{C}}(\mathbb{C})$ is $\mathbb{Q}$-acyclic.
In the case where $\mathbf{k}=\mathbb{R}$, assume further that $S(\mathbb{R})$
is non compact. Then there exists an arrangement $D_0$ of reduced geometrically
rational curves in $\mathbb{P}_{\mathbf{k}}^{2}$ and a rational subtree
$B$ of the reduced total transform of $D_0$ in a strict log-resolution $\tau:(V,D) \rightarrow (\mathbb{P}_{\mathbf{k}}^{2},D_0)$
of the pair $\left(\mathbb{P}_{\mathbf{k}}^{2},D_0\right)$ satisfying properties a) and b) in
$\S$ \ref{par:curve_arrangement_setup} above such that $S\simeq V\setminus B$. 
\end{prop}
\begin{proof}
Let $(V',B')$ be a smooth projective completion of
$S$ defined over $\mathbf{k}$, with boundary geometrically rational tree
$B'$. Since $V'$ is $\mathbf{k}$-rational the output $W$
of a MMP process $\alpha:V'\rightarrow W$ over $\mathbf{k}$ ran from
$V'$ is isomorphic over $\mathbf{k}$ either to $\mathbb{P}_{\mathbf{k}}^{2}$,
or to a Hirzebruch surface $\pi_{n}:\mathbb{F}_{n}=\mathbb{P}(\mathcal{O}_{\mathbb{P}_{\mathbf{k}}^{1}}\oplus\mathcal{O}_{\mathbb{P}_{\mathbf{k}}^{1}}(-n))\rightarrow\mathbb{P}_{\mathbb{\mathbf{k}}}^{1}$,
$n\in\mathbb{Z}_{\geq0}\setminus\{1\}$, or to the smooth quadric
$Q=\{x^{2}+y^{2}+z^{2}-t^{2}=0\}\subset\mathbb{P}_{\mathbf{k}}^{3}$,
the latter being isomorphic to $\mathbb{F}_{0}$ when $\mathbf{k}=\mathbb{C}$, see \cite{Co12} or \cite[Theorem~33, p. 206]{Ko01real}.

Let us assume for the moment that $W\simeq\mathbb{P}_{\mathbf{k}}^{2}$.
Since $S$ is affine, the geometrically rational tree $B'$ is the
support of an ample divisor and hence it cannot be fully contained
in the exceptional locus of $\alpha$. Its image $D_0=\alpha_{*}B'$
is thus a reduced divisor defined over $\mathbf{k}$, with geometrically
rational irreducible components, containing the image of the exceptional
locus of $\alpha$. Furthermore, since $S_{\mathbb{C}}(\mathbb{C})$
is $\mathbb{Q}$-acyclic, the map $j_{\mathbb{C}}\otimes\mathrm{id}:\mathbb{Z}\langle D_{0,\mathbb{C}}\rangle\otimes_{\mathbb{Z}}\mathbb{Q}\rightarrow\mathrm{Cl}(\mathbb{P}_{\mathbb{C}}^{2})\otimes_{\mathbb{Z}}\mathbb{Q}$
is surjective, because $j_{\mathbb{C}}\otimes \mathrm{id}:\mathbb{Z}\langle B'_{\mathbb{C}}\rangle\otimes_{\mathbb{Z}}\mathbb{Q}\rightarrow\mathrm{Cl}(V_{\mathbb{C}}')\otimes_{\mathbb{Z}}\mathbb{Q}$
is an isomorphism. Let $\beta:V\rightarrow V'$ be a strict log-resolution
of the pair $(V,\alpha^{-1}(D_0))$ defined over $\mathbf{k}$, with SNC-minimal exceptional locus, and restricting to an isomorphism over $V'\setminus \alpha^{-1}(D_0)$. By construction, $\tau=\alpha\circ\beta:V\rightarrow\mathbb{P}_{\mathbf{k}}^{2}$
is a strict log-resolution of $(\mathbb{P}_{\mathbf{k}}^{2},D_0)$ and $S$
is isomorphic to the complement in $V$ of the geometrically rational
subtree tree $B=\beta^{-1}(B')$ of $\tilde{B}$. Furthermore, since
the exceptional locus of $\alpha$ is a disjoint union of geometrically
rational trees, the image in $V'$ of the exceptional locus of $\beta$
is contained in the support of $B$. The minimality of $\beta$
then implies that $\mathrm{rk}(\mathbb{Z}\langle\tilde{B}_{\mathbb{C}}))$
and $\mathrm{rk}(\mathbb{Z}\langle B_{\mathbb{C}}))$ differ precisely
by the number of irreducible components of the exceptional locus $\mathrm{Exc}(\alpha_{\mathbb{C}})$
of $\alpha_{\mathbb{C}}$ which are not contained in the support of
$B_{\mathbb{C}}'$. 
Since $S_{\mathbb{C}}(\mathbb{C})$ is $\mathbb{Q}$-acyclic, we get  
$\mathrm{rk}(\mathbb{Z}\langle B_{\mathbb{C}}'))=\mathrm{rk(\mathrm{Cl}(V_{\mathbb{C}}'))=1+}\mathrm{rk}(\mathbb{Z}\langle\mathrm{Exc}(\alpha_{\mathbb{C}})\rangle)$. Combined with $\mathrm{rk}(\mathbb{Z}\langle B_{\mathbb{C}}'\rangle)=\mathrm{rk}(\mathbb{Z}\langle D_{\mathbb{C}}\rangle)+\mathrm{rk}\mathbb{Z}\langle\mathrm{Exc}(\alpha_{\mathbb{C}})\cap B_{\mathbb{C}}'\rangle$, 
we conclude that $\mathrm{rk}(\mathbb{Z}\langle\tilde{B}_{\mathbb{C}}))-\mathrm{rk}(\mathbb{Z}\langle B_{\mathbb{C}}))=\mathrm{rk}(\mathbb{Z}\langle D_{\mathbb{C}}\rangle)-1$.
So $S$ is isomorphic to a surface of the form $V\setminus B$ constructed as in $\S$ \ref{par:curve_arrangement_setup} above. 

Now it remains to show that the initial smooth projective completion $(V',B')$ of $S$
and the MMP process $\alpha:V'\rightarrow W$ can be chosen so that
$W\simeq\mathbb{P}_{\mathbf{k}}^{2}$. Starting with an arbitrary
smooth projective completion $(V_{0},B_{0})$ of $S$ with
boundary geometrically rational tree $B_{0}$ and an arbitrary MMP
process $\alpha_{0}:V_{0}\rightarrow W_{0}$, we proceed as follows.

Suppose first that $W_{0}$ is a Hirzebruch surface $\pi_{n}:\mathbb{F}_{n}\rightarrow\mathbb{P}_{\mathbf{k}}^{1}$,
$n\geq2$, with exceptional section $C\simeq\mathbb{P}_{\mathbb{\mathbf{k}}}^{1}$
of self-intersection $-n$. First note that $D_{0}=(\alpha_{0})_{*}B_{0}$
cannot be equal to $C$ only, for otherwise $V_0\setminus\alpha_{0}^{-1}(D_{0})\subset S=V_0\setminus B_{0}$
would contain a complete curve, for instance the inverse image of
a section of $\pi_{n}$ disjoint from $C$, in contradiction with
the affineness of $S$. Furthermore every irreducible component of
$D_{0}$ distinct from $C$ intersects $C$ in a finite number of
closed points. So if $\mathbf{k}=\mathbb{R}$ (resp. $\mathbf{k}=\mathbb{C})$,
it follows that there exists a non real $\mathbb{C}$-rational point
$p$ of $\mathbb{P}_{\mathbb{R}}^{1}$ (resp. a closed points $p\in\mathbb{P}_{\mathbb{C}}^{1}$)
such that the intersection of $\pi_{n}^{-1}(p)$ with $D_{0}$ is
nonempty and not fully contained in $D_{0}\cap C$. Let $\varphi:\mathbb{F}_{n}\dashrightarrow\mathbb{F}_{n-2}$
(resp. $\varphi:\mathbb{F}_{n}\dashrightarrow\mathbb{F}_{n-1}$) be
the elementary birational map consisting of blowing-up a point $q\in\pi_{n}^{-1}(p)\cap (D_{0}\setminus C)$
and contracting the proper transform of $\pi_{n}^{-1}(p)$. We obtain
a commutative diagram 
\begin{eqnarray*}
(V_{0},B_{0}) & \stackrel{f}{\leftarrow} & (V_{1},B_{1})\\
\alpha_{0}\downarrow &  & \downarrow\alpha_{1}\\
W_{0}=\mathbb{F}_{n} & \stackrel{\varphi}{\dashrightarrow} & W_{1}=\begin{cases}
\mathbb{F}_{n-2} & \textrm{if }\mathbf{k}=\mathbb{R}\\
\mathbb{F}_{n-1} & \textrm{if }\mathbf{k}=\mathbb{C}
\end{cases}
\end{eqnarray*}
where $f$ and $B_{1}$ are defined as follows: if $q$ belongs to
the image of the exceptional locus of $\alpha_{0}$ then $f$ is an
isomorphism of pairs, otherwise $f$ is the blow-up of the point $q\in B_{0}(\mathbb{C})$
and $B_{1}=f^{-1}(B_{0})$. In each case, $B_{1}$ is a geometrically
rational tree defined over $\mathbf{k}$, $V_{1}\setminus B_{1}\simeq S$
and the induced birational map $\alpha_{1}:V_{1}\rightarrow W_{1}$
is a process of MMP over $\mathbf{k}$. Arguing by induction, we reach
a smooth projective completion $(V_{\ell},B_{\ell})$ of $S$
defined over $\mathbf{k}$ with geometrically rational tree boundary
$B_{\ell}$ and a process of MMP over $\mathbf{k}$ $\alpha_{\ell}:V_{\ell}\rightarrow\mathbb{F}_{\varepsilon}$,
where $\varepsilon=0,1$. In the case where $\varepsilon=1$, we eventually
obtain the desired birational morphism $\tau:V_{\ell}\rightarrow\mathbb{P}_{\mathbf{k}}^{2}$
defined over $\mathbf{k}$ by contracting the negative section of
$\pi_{1}$. 

So it remains to treat the case where $W_{0}$ is isomorphic either
to $\mathbb{F}_{0}=\mathbb{P}_{\mathbf{k}}^{1}\times\mathbb{P}_{\mathbf{k}}^{1}$
or to the smooth quadric $Q=\{x^{2}+y^{2}+z^{2}-t^{2}=0\}\subset\mathbb{P}_{\mathbf{k}}^{3}$.
The hypothesis implies that $D_{0}=(\alpha_{0})_{*}B_{0}$ has a $\mathbf{k}$-rational
point $p$. Indeed, this is clear if $\mathbf{k}=\mathbb{C}$ and,
in the case where $\mathbf{k}=\mathbb{R}$, the emptiness of $D_{0}(\mathbb{R})$
would imply that of $\alpha_{0}^{-1}(D_{0})$, and we would have 
\[
S(\mathbb{R})=V_{0}(\mathbb{R})\setminus B_{0}(\mathbb{R})\supset V_{0}(\mathbb{R})\setminus\alpha_{0}^{-1}(D_{0})(\mathbb{R})=W_{0}(\mathbb{R})\approx \begin{cases}
\mathbb{T}^{2} & \textrm{if }W_{0}=\mathbb{P}_{\mathbb{R}}^{1}\times\mathbb{P}_{\mathbb{R}}^{1}\\
\mathbb{S}^{2} & \textrm{if }W_{0}=Q,
\end{cases}
\]
in contradiction with the non compactness of $S(\mathbb{R})$. In
the case where $W_{0}=\mathbb{P}_{\mathbf{k}}^{1}\times\mathbb{P}_{\mathbf{k}}^{1}$,
we let $\varphi:\mathbb{P}_{\mathbb{\mathbf{k}}}^{1}\times\mathbb{P}_{\mathbf{k}}^{1}\dashrightarrow\mathbb{F}_{1}$
be the blow-up of $p$ followed by the contraction of the fiber of
$\mathrm{pr}_{1}$ containing $p$. Similarly as in the previous case,
we obtain a commutative diagram 
\begin{eqnarray*}
(V_{0},B_{0}) & \stackrel{f}{\leftarrow} & (V_{1},B_{1})\\
\alpha_{0}\downarrow &  & \downarrow\alpha_{1}\\
W_{0}=\mathbb{P}_{\mathbb{R}}^{1}\times\mathbb{P}_{\mathbb{R}}^{1} & \stackrel{\varphi}{\dashrightarrow} & \mathbb{F}_{1}=W_{1},
\end{eqnarray*}
where $f$ is either an isomorphism of pairs if $p$ belongs to the
image of the exceptional locus of $\alpha_{0}$, or the blow-up of
the point $p\in B_{0}(\mathbf{k})$ in which case $B_{1}=f^{-1}(B_{0})$.
By construction, $(V_{1},B_{1})$ is a smooth projective completion of $S$ with
geometrically rational tree boundary $B_{1}$ and $\alpha_{1}:V_{1}\rightarrow\mathbb{F}_{1}$
is a process of MMP over $\mathbf{k}$. The composition of $\alpha_{1}$
with the contraction of the exceptional section of $\pi_{1}$ is the
desired morphism $\tau:V_{1}\rightarrow\mathbb{P}_{\mathbf{k}}^{2}$.

Finally, in the remaining case where $\mathbf{k}=\mathbb{R}$ and
$W_{0}=Q$, we let $\varphi:Q\dashrightarrow\mathbb{P}_{\mathbb{R}}^{2}$
be the blow-up of $p$ followed by the contraction of the unique curve
$\Delta\simeq\mathbb{P}_{\mathbb{C}}^{1}$ passing through $p$ and
whose complexification $\Delta_{\mathbb{C}}\simeq\mathbb{P}_{\mathbb{C}}^{1}\cup \mathbb{P}_{\mathbb{C}}^{1}$ is of type $(1,1)$
in $\mathrm{Cl}(Q_{\mathbb{C}})$. Again, we obtain a commutative
diagram 
\begin{eqnarray*}
(V_{0},B_{0}) & \stackrel{f}{\leftarrow} & (V_{1},B_{1})\\
\alpha_{0}\downarrow &  & \downarrow\alpha_{1}\\
W_{0}=Q & \stackrel{\varphi}{\dashrightarrow} & W_{1}=\mathbb{P}_{\mathbb{R}}^{2}
\end{eqnarray*}
where $f$ is the blow-up of $p\in B_{0}(\mathbb{R})$ and $B_{1}=f^{-1}(B_{1})$
if $p$ does not belong to the image of the exceptional locus of $\alpha_{0}$
and an isomorphism of pairs otherwise. By construction, $(V_{1},B_{1})$
is a smooth projective completion of $S$ with geometrically rational
tree boundary $B_{1}$ and $\tau=\alpha_{1}:V_{1}\rightarrow\mathbb{P}_{\mathbb{R}}^{2}$
is the desired morphism. 
\end{proof}

\section{Elements of classification of homology euclidean planes}

\label{thm:hom_kod_neg_zero} In this section, we consider homology
euclidean planes $S$ up to biregular isomorphisms of schemes over $\mathbb{R}$
according to their Kodaira dimension. The cases where
$S$ have Kodaira dimension $0$ or $-\infty$ are easily dispensed
by the following observations: first there is no smooth complex $\mathbb{Z}$-acyclic
surface of Kodaira dimension $0$ at all (see e.g. \cite[Theorem 4.7.1 (1), p. 244]{MiyBook})
and second, a smooth complex $\mathbb{Z}$-acyclic surface of negative Kodaira dimension
is isomorphic to $\mathbb{A}_{\mathbb{C}}^{2}$ by virtue of \cite{MS80}.
Combined with the fact that there is no nontrivial form of the affine
$2$-space over a field of characteristic zero \cite{Kam75}, this
implies that $\mathbb{A}_{\mathbb{R}}^{2}$ is the only smooth $\mathbb{Z}$-acyclic surface
of non positive Kodaira dimension, up to isomorphisms of schemes
over $\mathbb{R}$. 

So we are left with the problem of classifying homology euclidean
planes of Kodaira dimension $1$ and $2$. A complete classification
in the first case is given in the next subsection, in the form of
a real counterpart of the existing description for complex surfaces.
In contrast, the classification of $\mathbb{Q}$-acyclic surfaces of general type
remains much more elusive, already in the complex case. Therefore,
we only establish a real counterpart of the ``cutting-cycle construction''
due to tom Dieck and Petrie \cite{tDPe93} in the complex case, from
which we derive real models of existing families of examples in the
complex case.

\subsection{\label{sub:Kod-1}Homology euclidean planes of Kodaira dimension $1$. }

Here we establish the real counterpart of the classification of smooth
complex $\mathbb{Z}$-acyclic surfaces of Kodaira dimension $1$ following Gurjar
and Miyanishi \cite{GuMi87} (see also \cite{tDPe90} for a complementary alternative construction of contractible
such surfaces in the complex case).

Let us first briefly review the scheme of the classification in the complex case, following the presentation of Chapter 3, Section 4 in \cite{MiyBook}. By \cite[Theorem 4.7.1]{MiyBook} a smooth $\mathbb{Z}$-acyclic complex surface of Kodaira dimension $1$ has an untwisted $\mathbb{A}^1_*$-fibration $q :S \rightarrow  \mathbb{P}^1_{\mathbb{C}}$, that is, a surjective morphism whose generic fiber is isomorphic to punctured affine line $\mathbb{A}^1_*$ over the field of rational functions on $\mathbb{P}^1_{\mathbb{C}}$. More precisely, it follows from \cite[Chapter 2, Theorem 6.1.5]{MiyBook} that given an arbitrary smooth projective completion $(V,B)$ of $S$, $q$ is the restriction to $S$ of the rational map $V \dashrightarrow \mathbb{P}^1_{\mathbb{C}}$ defined by the complete linear system $|m(K_V+B)|$ for sufficiently big $m\geq 1$. In what follows we refer $q:S \rightarrow  \mathbb{P}^1_{\mathbb{C}}$ to as the \emph{log-canonical} $\mathbb{A}^1_*$-fibration of $S$. 
By \cite[Lemma 4.5.1]{MiyBook}, every scheme theoretic fiber of $q$ is isomorphic to $\mathbb{A}^1_{*,\mathbb{C}}$ when equipped with its reduced structure, except for one fiber which is isomorphic to $\mathbb{A}^1_{\mathbb{C}}$. The $\mathbb{Z}$-acyclicity of $S$ then implies (see \cite[$\S$ 4.5.2 and Theorem 4.6.1]{MiyBook}) the existence of a smooth projective completion $(V,B)$ into a projective surface equipped with a $\mathbb{P}^1$-fibration $\pi:V\rightarrow \mathbb{P}^1_{\mathbb{C}}$, that is a surjective morphism with generic fiber isomorphic
to the projective line over the function field of $\mathbb{P}^1_{\mathbb{C}}$. The completion $(V,B)$ is obtained by a very specific sequence of blow-ups $\tau:V\rightarrow \mathbb{P}^2_{\mathbb{C}}$ (see $\S$ \ref{par:Kod1-Construction} below), for which  $q$ coincides with the restriction of $\pi$. The classification in the complex case can then be summarized as follows (see also \cite[Theorem 2.6]{Za99}):

\begin{thm}\label{thm:Kod1-Cplx} Let $S$ be a smooth complex $\mathbb{Z}$-acyclic surface of Kodaira dimension $1$ and let $q:S \rightarrow \mathbb{P}^1_{\mathbb{C}}$ be its log-canonical untwisted $\mathbb{A}^1_*$-fibration. Then there exists a pair $(V,B)$, consisting of a smooth projective $\mathbb{P}^1$-fibered surface $\pi:V\rightarrow \mathbb{P}^1_{\mathbb{C}}$ and a rational tree $B\subset V$ constructed by the procedure described in $\S$ \ref{par:Kod1-Construction}, and an isomorphism $S\simeq V\setminus{B}$ making the following diagram commutative  \[\xymatrix{ S \ar[r]^-{\simeq} \ar[d]_{q} & V\setminus B \ar[d]^{\pi|_{V\setminus D}} \\ \mathbb{P}^1_{\mathbb{C}} \ar@{=}[r] & \mathbb{P}^1_{\mathbb{C}}.}\]
\end{thm}

\noindent Our counterpart for surfaces defined over $\mathbb{R}$ reads as follows:  

\begin{thm} \label{thm:Fake-Kodaira-1}  Let $S$ be an integral homology euclidean plane $S$ of Kodaira dimension $1$. Then $S$ admits an untwisted $\mathbb{A}^1_*$-fibration $\rho:S\rightarrow \mathbb{P}^1_{\mathbb{R}}$ defined over $\mathbb{R}$, and there exists a pair $(V,B)$,  consisting of a smooth projective surface $\mathbb{P}^1$-fibered surface $\pi:V\rightarrow \mathbb{P}^1_{\mathbb{R}}$ defined over $\mathbb{R}$ and a tree of $\mathbb{R}$-rational curves $B\subset V$ constructed by the procedure described in $\S$ \ref{par:Kod1-Construction}, and an isomorphism $S\simeq V\setminus B$ defined over $\mathbb{R}$ making the following diagram commutative  \[\xymatrix{ S \ar[r]^-{\simeq} \ar[d]_{\rho} & V\setminus B \ar[d]^{\pi|_{V\setminus B}} \\ \mathbb{P}^1_{\mathbb{R}} \ar@{=}[r] & \mathbb{P}^1_{\mathbb{R}}.}\]
\end{thm}

The rest of this subsection is devoted to the proof of Theorem \ref{thm:Fake-Kodaira-1}. We first introduce the appropriate blow-up construction in $\S$ \ref{par:Kod1-Construction} and then proceed to the proof itself in $\S$ \ref{proof:Kod1}.   

\subsubsection{\label{par:Kod1-Construction} A blow-up construction}

Let $\mathbf{k}=\mathbb{R}$ or $\mathbb{C}$. We let $D_0\subset\mathbb{P}_{\mathbf{k}}^{2}$
be the union of a collection $E_{0,0},\ldots,E_{n,0}\simeq\mathbb{P}_{\mathbf{k}}^{1}$
of $n+1\geq3$ lines in $\mathbb{P}_{\mathbf{k}}^{2}$
intersecting in a same $\mathbf{k}$-rational point $x$ and of a
general line $C_{1}\simeq\mathbb{P}_{\mathbf{k}}^{1}$. For every
$i=1,\ldots,n$, we choose a pair of coprime integers $1\leq\mu_{i,-}<\mu_{i,+}$
in such a way that for $v_{-}=^{t}(\mu_{1,-},\ldots,\mu_{n-})\in\mathcal{M}_{n,1}(\mathbb{Z})$
and $\Delta_{+}=\mathrm{diag}(\mu_{1,+},\ldots,\mu_{n,+})\in\mathcal{M}_{n,n}(\mathbb{Z})$,
the following two conditions are satisfied\footnote{These conditions guarantee respectively that the open surface $S$ resulting from the construction has Kodaira dimension 1 and $\mathbb{Z}$-acyclic complexification, see the proof of Proposition $\S$ \ref{prop:Kod1-Construction-Properties}.}:
\begin{equation} \label{Kod1-Conditions}
\textrm{a) }\eta=n-1-{\displaystyle \sum_{i=1}^{n}\frac{1}{\mu_{i,+}}}>0  \quad\textrm{and}\quad  b)\textrm{ The matrix }\mathcal{N}=
\begin{pmatrix}
-1 & -1\cdots -1 \\
v_{-} & \Delta_{+}
\end{pmatrix}
\textrm{ belongs to }\mathrm{GL}_{n+1}(\mathbb{Z}). 
\end{equation}
Then we let $\tau:V\rightarrow\mathbb{P}_{\mathbf{k}}^{2}$ be the
smooth projective surface obtained by the following blow-up procedure: 

1) We first blow-up $x$ with exceptional divisor $C_{0}\simeq\mathbb{P}_{\mathbf{k}}^{1}$.
The resulting surface is isomorphic to the Hirzebruch surface $\pi_{1}:\mathbb{F}_{1}=\mathbb{P}(\mathcal{O}_{\mathbb{P}_{\mathbb{\mathbf{k}}}^{1}}\oplus\mathcal{O}_{\mathbb{P}_{\mathbb{\mathbf{k}}}^{1}}(-1))\rightarrow\mathbb{P}_{\mathbb{\mathbb{\mathbf{k}}}}^{1}$
with $C_{0}$ as the negative section of $\pi_{1}$, the proper transforms
of $E_{0,0},\ldots,E_{n,0}$ are $\mathbf{k}$-rational fibers of
$\pi_{1}$ while the strict transform of $C_{1}$ is a section of
$\pi_{1}$ disjoint from $C_{0}$. 

2) Then for every $i=1,\ldots,n$, we perform the subdivisional expansion
at the $\mathbf{k}$-rational point $p_{i}=C_{1}\cap E_{i,0}$
with multiplicity $(\mu_{i,-},\mu_{i,+})$ (see Example \ref{ex:subdiv_exp}) 
and exceptional divisors $E_{i,1},\ldots,E_{i,r_{i}-1},E_{i,r_{i}}=A_{0}(p_{i})$. 

3) Finally, we perform a sequence of blow-ups starting with the blow-up
of a $\mathbf{k}$-rational point $p_{0}\in E_{0,0}\setminus(C_{0}\cup C_{1})$,
with exceptional divisor $E_{0,1}\simeq\mathbb{P}_{\mathbb{\mathbf{k}}}^{1}$
and continuing with a sequence of $r_{0}-1\geq0$ blow-ups of $\mathbb{R}$-rational
points $p_{0,i}\in E_{0,i}\setminus E_{0,i-1}$, $i=1,\ldots,r_{0}-1$,
with successive exceptional divisors $E_{0,i+1}$. We let $A_{0}(p_{0})=E_{0,r_{0}}$. 

\begin{figure}[ht] 
\centering 
\input{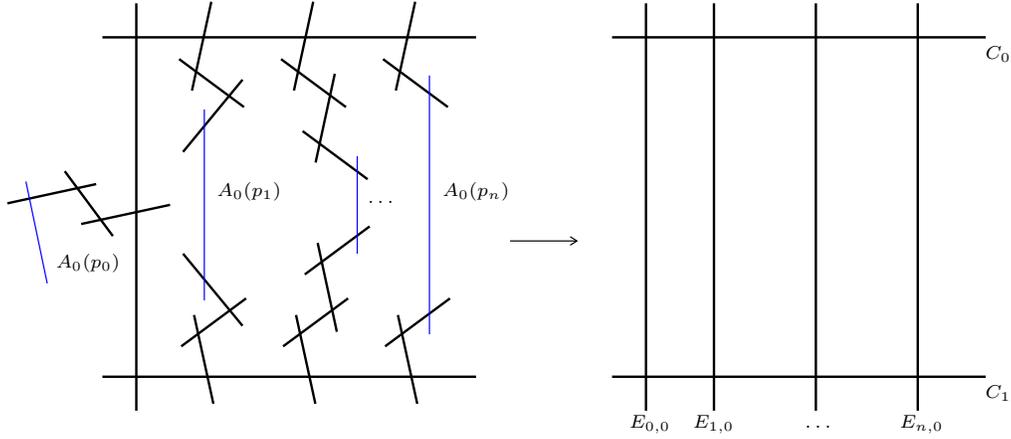} 
\caption{Construction of homology euclidean plane of logarithmic Kodaira dimension $1$.}        
 \label{fig:logkod1} 
\end{figure} 

The union $B$ of the proper transforms of $C_{0}$, $C_{1}$, and
the divisors $E_{i,j}$, $i=0,\ldots,n$, $j=0,\ldots r_{i}-1$, is
a rational subtree of the total transform $\tilde{B}$ of $D_0$ by
the so-constructed morphism $\tau:V\rightarrow\mathbb{P}_{\mathbf{k}}^{2}$. We let $\tau_1:V\rightarrow  \mathbb{F}_{1}$ be the induced birational morphism, we let $\pi=\pi_{1}\circ\tau_1:V\rightarrow\mathbb{P}_{\mathbf{k}}^{1}$, and we let $S=V\setminus B$. 
By construction,  $\pi|_S:S\rightarrow\mathbb{P}_{\mathbf{k}}^{1}$ is an untwisted $\mathbb{A}^1_*$-fibration defined over $\mathbf{k}$ whose unique degenerate fibers are $A_{0}(p_0)\cap S\simeq \mathbb{A}^1_{\mathbf{k}}$ occuring with multiplicity one, and the curves $A_{0}(p_i)\cap S\simeq \mathbb{A}^1_{*,\mathbf{k}}$, $i=1,\ldots, n$, with respective multiplicities $\mu_{i,+}$. 
 
\begin{prop}
\label{prop:Kod1-Construction-Properties} With the notation above,
the surface $S$ is smooth, $\mathbf{k}$-rational, geometrically
integral. Its Kodaira dimension is equal to $1$ and $S_{\mathbb{C}}(\mathbb{C})$
is $\mathbb{Z}$-acyclic. If $\mathbf{k}=\mathbb{R}$ then $S(\mathbb{R})$
is diffeomorphic to $\mathbb{R}^{2}$. 
\end{prop}
\begin{proof}
The fact that $S$ is defined over $\mathbf{k}$, smooth, $\mathbb{\mathbf{k}}$-rational
and geometrically integral is clear from the construction. Condition \ref{Kod1-Conditions} a) in the blow-up construction implies by virtue of Theorem 4.6.1 (1) in \cite{MiyBook} that $\kappa(S)=\kappa(S_{\mathbb{C}})=1$ . The morphism $d_{\mathbb{C}}:\mathbb{Z}\langle D_{0,\mathbb{C}}\rangle\rightarrow\mathrm{Cl}(\mathbb{P}_{\mathbb{C}}^{2})$ is clearly surjective and the elements $C_{1}-E_{0,0}$ and $E_{i,0}-E_{0,0}$,
$i=1,\ldots,n$ form a basis of its kernel $R$. With the notation
of Lemma \ref{lem:plane_arrangement_top_charac}, the abelian group $\mathbb{Z}\langle\mathcal{E}_{0}\rangle$
is generated by the curves $A_{0}(p_{i})$, $i=0,\ldots n$, and by
construction, the matrix of the homomorphism $\varphi:R\rightarrow\mathbb{Z}\langle\mathcal{E}_{0}\rangle$
is equal to $\mathcal{N}$. Since $\mathcal{N}$ is invertible by
hypothesis, the $\mathbb{Z}$-acylicity of $S_{\mathbb{C}}(\mathbb{C})$
follows from Lemma \ref{lem:plane_arrangement_top_charac} a). In
the case where $\mathbf{k}=\mathbb{R}$, the fact that $S(\mathbb{R})\approx \mathbb{R}^{2}$
follows from c) in the same lemma and the surjectivity of $j:\mathbb{Z}\langle D\rangle\rightarrow\mathrm{Cl}(\mathbb{P}_{\mathbb{R}}^{2})$. 
\end{proof}

\subsubsection{\label{proof:Kod1} Classification of homology euclidean planes of Kodaira dimension
$1$}

We now proceed to the proof of Theorem \ref{thm:Fake-Kodaira-1}. So let  $S$ be a homology euclidean plane of Kodaira dimension $1$ and let $(V_{0},B_{0})$ be a smooth projective completion of $S$ with SNC boundary defined over $\mathbb{R}$. Since $\kappa(S)=1$, a multiple of the positive part of the Zariski decomposition of $K_{V_{0}}+B_{0}$ induces a rational map $\overline{\rho}_{0}:V_{0}\dashrightarrow Z$
over a smooth projective curve $Z$ defined over $\mathbb{R}$ whose
restriction to $S$ is a morphism $\rho:S\rightarrow U$ over a Zariski
open subset $U$ of $Z$ defined over $\mathbb{R}$. Furthermore, the generic
fiber of $\rho$ is a form of the punctured affine line over the function field
of $Z$ \cite[Theorem 6.1.5]{MiyBook}. Since $S$, whence $V_{0}$
is $\mathbb{R}$-rational and contains an $\mathbb{R}$-rational point,
it follows that $Z\simeq\mathbb{P}_{\mathbb{R}}^{1}$. Letting $\beta:V_1\rightarrow V_{0}$
be a minimal resolution of the indeterminacies of $\overline{\rho}_{0}$
and $B_1=\beta^{-1}(B_{0})$, the composition $\overline{\rho}_1=\overline{\rho}_{0}\circ\beta:V_1\rightarrow Z=\mathbb{P}_{\mathbb{R}}^{1}$ is a surjective fibration with generic fiber isomorphic to a form
of the projective line over the function field $\mathbb{R}(t)$ of
$Z$, which restricts to $\rho$ on the open subset $V_1\setminus B_1\simeq S$. 
This implies that $B_1$ contains a $2$-section $C'$ of $\overline{\rho}_1$.
After the base change to $\mathrm{Spec}(\mathbb{C})$, we obtain a
$\mathbb{P}^{1}$-fibration $\overline{\rho}_{1,\mathbb{C}}:V_{1,\mathbb{C}}\rightarrow Z_{\mathbb{C}}\simeq\mathbb{P}_{\mathbb{C}}^{1}$ whose restriction $\rho_{1,\mathbb{C}}:S_{\mathbb{C}}\simeq V_{1,\mathbb{C}}\setminus B_{1,\mathbb{C}}\rightarrow U_{\mathbb{C}}$ coincides with the log-canonical $\mathbb{A}^1_*$-fibration of $S_{\mathbb{C}}$. Since by \cite[Theorem 4.7.1]{MiyBook}, $\rho_{1,\mathbb{C}}$ is an untwisted $\mathbb{A}^1_*$-fibration, it follows that $C'_{\mathbb{C}}$ consists of a pair of distinct irreducible sections of $\overline{\rho}_{1,\mathbb{C}}$. By replacing $(V_1,B_1)$ by the surface obtained by blowing-up $\mathbb{R}$-rational points on $C'$, including infinitely near ones, we may assume that the irreducible components of $C'_{\mathbb{C}}$ are disjoint. Then we let $(V',B')$ be the  smooth projective completion  of $S$ obtained by contracting if necessary all possible exceptional curves of the first kind supported simultaneously in $B_1$ and in the fibers of $\overline{\rho}_1$ while keeping the following properties: a) $C'_{\mathbb{C}}$ consists of two disjoint irreducible components $(C_{0}')_{\mathbb{C}}$ and $(C_{1}')_{\mathbb{C}}$ and b) the successive proper transforms of $B_1$ are simple normal crossing divisors.

After the base change to $\mathrm{Spec}(\mathbb{C})$, we obtain a
$\mathbb{P}^{1}$-fibration $\overline{\rho}_{\mathbb{C}}':V_{\mathbb{C}}'\rightarrow Z_{\mathbb{C}}\simeq\mathbb{P}_{\mathbb{C}}^{1}$
whose restriction $\rho_{\mathbb{C}}':S_{\mathbb{C}}\simeq V'_{\mathbb{C}}\setminus B'_{\mathbb{C}}\rightarrow U_{\mathbb{C}}$ coincides with the log-canonical $\mathbb{A}^1_*$-fibration of $S_{\mathbb{C}}$. By virtue of Theorem \ref{thm:Kod1-Cplx}, there exists a pair $(V,B)$ obtained by a sequence of blow-ups $\tau:V\rightarrow \mathbb{P}^2_{\mathbb{C}}$ as in $\S$ \ref{par:Kod1-Construction} and a commutative diagram 
\[\xymatrix{S_{\mathbb{C}} \ar[d]_{\rho_{\mathbb{C}}'} \ar[r]^-{\simeq} & V\setminus B \ar[d]^{\pi|_{V\setminus B}} \\ \mathbb{P}^1_{\mathbb{C}} \ar@{=}[r] & \mathbb{P}^1_{\mathbb{C}}.}\]

\begin{lem} The birational map $\psi: V \dashrightarrow V'_{\mathbb{C}} $ induced by the isomorphism $  V \setminus B \simeq S_{\mathbb{C}} \simeq V'_{\mathbb{C}}\setminus B'_{\mathbb{C}}$ is an isomorphism of $\mathbb{P}^{1}$-fibered surfaces. 
\end{lem}

\begin{proof} Indeed, let $V\stackrel{\alpha}{\leftarrow}Y\stackrel{\alpha'}{\rightarrow}V'_{\mathbb{C}}$
be a minimal resolution of $\psi$, where $\alpha$ consists
of a possibly empty sequence of blow-ups of points supported on the
successive total transforms of $B$, and let $B_{Y}=\alpha^{-1}(B)$.
Then we have a commutative diagram 
\[
\begin{array}{ccccc}
 &  & Y\\
 & \alpha\swarrow &  & \searrow\alpha'\\
V &  & \stackrel{\psi}{\dashrightarrow} &  & V'_{\mathbb{C}}\\
\pi \downarrow &  &  &  & \downarrow\overline{\rho}_{\mathbb{C}}'\\
\mathbb{P}_{\mathbb{C}}^{1} &  & \stackrel{\sim}{\rightarrow} &  & \mathbb{P}_{\mathbb{C}}^{1}.
\end{array}
\]
Note that the proper transforms of $C_{0}$ and $C_{1}$ in $Y$ are
cross-sections of the $\mathbb{P}^{1}$-fibrations $\pi\circ\alpha$
and $\overline{\rho}_{\mathbb{C}}'\circ\alpha'$. So $\psi$ must
restrict to an isomorphism in a Zariski neighborhood of them and since
$(C_{0}')_{\mathbb{C}}$ and $(C_{1}')_{\mathbb{C}}$ are the only
cross-sections of $\overline{\rho}_{\mathbb{C}}'\circ\alpha'$ supported
on $B'_{\mathbb{C}}$, we have $\psi_{*}(C_{0}+C_{1})=(C_{0}')_{\mathbb{C}}+(C_{1}')_{\mathbb{C}}$.
If $\alpha'$ is not an isomorphism, then it factors through the contraction
of a $(-1)$-curve supported on the proper transform of $B$ in $Y$.
By construction, the only possible such curves are the proper transforms
of the curves $C_{1}$, $C_{0}$ and $E_{0,0}$ of $\mathbb{F}_{1}$. Since $\psi$ does not contract any of the first two, the only possibility would be that the proper transform of $E_{0,0}$ in $Y$ is a $(-1)$-curve,
and this can occur if and only if no proper base point of $\alpha^{-1}$
is supported on the proper transform of $E_{0,0}$ in $V$. This implies
in turn that the proper transform of $E_{0,0}$ in $Y$ still intersects
those of $C_{0}$ and $C_{1}$. But then, after the contraction of
$E_{0,0}$, the images of $C_{0}$ and $C_{1}$ would intersect each
other. This would imply in turn that $\psi_{*}(C_{0})$ and $\psi_{*}(C_{1})$
intersect each other, in contradiction with the construction of $(V',B')$. 
So $\alpha'$ is an isomorphism and we may assume from now on that
$(Y,B_{Y})=(V'_{\mathbb{C}},B'_{\mathbb{C}})$. Now suppose that $\alpha$
is not an isomorphism. Then at least one of its exceptional divisor
is a $(-1)$-curve $E$ supported on the boundary $B'_{\mathbb{C}}$ and
since $B$ is SNC, $E$ intersects at most two other irreducible components
of $B'_{\mathbb{C}}$. 

By construction of $B$, this implies that either $E$ intersects $(C_{0}')_{\mathbb{C}}$ and $(C_{1}')_{\mathbb{C}}$
simultaneously or that $E$ is an irreducible component of a fiber of $\overline{\rho}_{\mathbb{C}}'$ whose image $\overline{E}$ by the real structure on $V_{\mathbb{C}}'$ intersects $E$ and is contained in $B'_{\mathbb{C}}$. The first case is impossible as  $(C_{0}')_{\mathbb{C}}$ and $(C_{1}')_{\mathbb{C}}$ coincide with the proper transforms
of $C_{0}$ and $C_{1}$. In the second case, the image of $\overline{E}$ by the contraction of $E$ would be a curve with self-intersection $0$ contained simultaneously in $B$ and in a fiber of the $\mathbb{P}^1$-fibration $\pi$, hence would be equal to this fiber. But this would contradict the surjectivity of the restriction $q:S\rightarrow \mathbb{P}^1_{\mathbb{C}}$ of $\pi$ to $S$. So $\psi$ is an isomorphism of $\mathbb{P}^{1}$-fibered surfaces.
\end{proof} 

To complete the proof of Theorem \ref{thm:Fake-Kodaira-1}, it remains to observe the following. First since since $A_0(p_0)\cap S_{\mathbb{C}}$ is the unique fiber of $\pi|_{S_{\mathbb{C}}}$ isomorphic to $\mathbb{A}^1_{\mathbb{C}}$, we conclude that $\rho'$ has a unique degenerate fiber isomorphic to $\mathbb{A}^1_{\mathbb{R}}$. Its image by $\rho'$ is thus necessarily an $\mathbb{R}$-rational point of $Z\simeq \mathbb{P}^1_{\mathbb{R}}$ and the structure of $\pi^{-1}(\pi(A_0(p_0)))$ then implies further that the irreducible components of the fiber of $\overline{\rho}'$ over this point are all $\mathbb{R}$-rational. 

The other degenerate fibers $F_{\ell}$ of $\rho'$ are isomorphic to forms of $\mathbb{A}^1_*$ over the corresponding residue fields when equipped with their reduced structure. If $\rho'$ has such a degenerate fiber over a $\mathbb{C}$-rational point of $Z$ then there exists a pair of distinct points $p_i,p_j\in \mathbb{P}^1_{\mathbb{C}}$, $i,j\in \{ 1,\ldots , n \}$  such that the scheme-theoretic fibers of $\pi$ over $p_i$ and $p_j$ are isomorphic. With the notation of $\S$ \ref{par:Kod1-Construction}, this implies in particular that $\mu_{i,\pm}=\mu_{j,\pm}$. But since $\mu_{i,+}\geq 2$, the matrix $\mathcal{N}$ would not belong to $\mathrm{GL}_{n+1}(\mathbb{Z})$, a contradiction. So every other degenerate fiber $F_{\ell}$ is isomorphic to a form of $\mathbb{A}^1_{*,\mathbb{R}}$. In addition, the fiber of $\overline{\rho}_{\mathbb{C}}'$ over $\rho_{\mathbb{C}}'(F_{\ell,\mathbb{C}})$ is a chain of rational curves invariant under the real structure on $V_{\mathbb{C}}'$ and containing the closure of $F_{\ell,\mathbb{C}}$ as a unique invariant $(-1)$-curve. So we are left with two possibilities: either $(C_0)'_{\mathbb{C}}$ and $(C_1)'_{\mathbb{C}}$ are exchanged by the real structure on $V_{\mathbb{C}}'$ and then the fiber of $\overline{\rho}_{\mathbb{C}}'$ over $\rho_{\mathbb{C}}'(F_{\ell,\mathbb{C}})$ consists of a chain of type $[-2,-1,-2]$, or $(C_0)'_{\mathbb{C}}$ and $(C_1)'_{\mathbb{C}}$ are both invariant under the real structure and then the fiber of $\overline{\rho}'$ over $\rho'(F_{\ell})$ is a chain of $\mathbb{R}$-rational curves. In the first case, the fibers of $\pi$ over all the points $p_i$, $i \in \{ 1,\ldots , n \}$, would all be chains of the same type $[-2,-1,-2]$, and since $n\geq 2$ the matrix $\mathcal{N}$ would not belong to $\mathrm{GL}_{n+1}(\mathbb{Z})$, a contradiction.

So all degenerate fibers of $\overline{\rho}'$ are chains of $\mathbb{R}$-rational curves. By contracting all successive possible $(-1)$-curves in these fibers in the reverse order of their creation by $\tau:V\rightarrow \mathbb{P}^2_{\mathbb{C}}$ and then contracting the image of $C_{0}'$, we obtain a morphism $\tau':V'\rightarrow \mathbb{P}^2_{\mathbb{R}}$ defined over $\mathbb{R}$ which presents $V'$ as a surface obtained  by a sequence of blow-ups of $\mathbb{R}$-rational points as described in $\S$ \ref{par:Kod1-Construction}.

\subsubsection{\label{par:Kod1-Examples} Examples}

\begin{example}
\label{Ex:homotopy-Kod1} (Homotopy euclidean planes of Kodaira dimension
$1$). Let $1<a<b$ be a pair of coprime integers and let $\psi:\mathbb{P}_{\mathbb{R}}^{2}=\mathrm{Proj}(\mathbb{R}[x,y,z])\dashrightarrow\mathbb{P}_{\mathbb{R}}^{1}$
be the pencil generated by the curves $\{x^{a}z^{b-a}=0\}$ and $\{y^{b}=0\}$.
So $\psi$ has two proper base points $b_{0}=[0:0:1]$ and $b_{1}=[1:0:0]$,
a general geometrically irreducible member of $\psi$ is an $\mathbb{R}$-rational
cuspidal curve and $\psi$ has precisely two degenerate members: $\psi^{-1}([1:0])$
which is the union of the lines $L_{x}=\{x=0\}$ and $L_{z}=\{z=0\}$
counted with multiplicities $a$ and $b-a$ respectively, and $\psi^{-1}([0:1])$
which is equal to the line $L_{y}=\{y=0\}$ counted with multiplicity
$b$. Up to exchanging the roles of $x$ and $z$, we assume from
now on that $a>b-a$. 

Let $E_{0,0}$ be a general member $q$, for instance $E_{0,0}=q^{-1}([1:1])=\{x^{a}z^{b-a}+y^{b}=0\}$
and let $D=E_{0,0}\cup L_{z}$. Let $p_{0}\in E_{0,0}(\mathbb{R})\setminus\{b_{1}\}$
be a smooth $\mathbb{R}$-rational point, for instance $p_{0}=[1:-1:1]$,
let $\beta_{r_{0}}:X(a,b;r_{0})\rightarrow\mathbb{P}_{\mathbb{R}}^{2}$
be the birational morphism obtained by first blowing-up $p_{0}$ with
exceptional divisor $E_{0,1}$, and then performing a sequence of
$r_{0}-1\geq0$ blow-ups of $\mathbb{R}$-rational points $p_{0,i}\in E_{0,i}\setminus E_{0,i-1}$,
$i=1,\ldots,r_{0}-1$ with exceptional divisor $E_{0,i+1}$. We let
$S(a,b,;r_{0})=X\setminus\{E_{0,0}\cup\cdots\cup E_{0,r_{0}-1}\cup L_{z}\}$
where we identified a curve in $\mathbb{P}_{\mathbb{R}}^{2}$ with
its proper transform in $X$. A minimal resolution $\alpha:V\rightarrow X(a,b;r_{0})$
of the induced rational pencil $\beta_{r_{0}}\circ q:X(a,b;r_{0})\dashrightarrow\mathbb{P}_{\mathbb{R}}^{1}$
is isomorphic to a surface $\tau:V\rightarrow\mathbb{P}_{\mathbb{R}}^{2}$
obtained by the construction of $\S$ \ref{par:Kod1-Construction}
with $\mathbf{k=}\mathbb{R}$, $n=2$ and multiplicities $(\mu_{1,-},a)$
and $(\mu_{2,-},b)$, where $1\leq\mu_{1,-}<a$ and $1\leq\mu_{2,-}<b$
are uniquely determined in terms of $a$ and $b$ (see \cite[(2.7)]{tDPe90}
and \cite[(5.3)]{tDPe93} for the computation). 

\begin{figure}[ht] 
\centering 
\input{logkod1homotop-f.tex} 
\caption{}        
\label{fig:logkod1EX} 
\end{figure} 

Via this isomorphism, the boundary $B$ coincides with the total transform
of $E_{0,0}\cup\cdots\cup E_{0,r_{0}-1}\cup L_{z}$, the sections
$C_{0}$ and $C_{1}$ coincide respectively with the last exceptional
divisors of $\alpha$ of the points $q_{0}$ and $q_{1}$ and the
curves $A_{1}$ and $A_{2}$ are the proper transforms of $L_{x}$
and $L_{y}$ respectively. The surface $S(a,b;r_{0})_{\mathbb{C}}$
is thus $\mathbb{Z}$-acylic with $S(a,b;r_{0})(\mathbb{R})\approx\mathbb{R}^{2}$.
In fact, $S(a,b;r_{0})_{\mathbb{C}}(\mathbb{C})$ is even contractible
\cite{tDPe90}, and, using the same method as in the proof of Theorem
\ref{thm:Fake-Kodaira-1} above, one can deduce from the classification
of smooth complex contractible surfaces of Kodaira dimension $1$\emph{
}given in \emph{loc. cit.} that every homology euclidean plane $S$
of Kodaira dimension $1$ such that $S_{\mathbb{C}}(\mathbb{C})$
is contractible is isomorphic over $\mathbb{R}$ to $S(a,b;r_{0})$
for some parameters $a,b,r_{0}$ as above. 
\end{example}
\noindent
\begin{example}
Specializing the values $(a,b,r_{0})$ to $(2,3,1)$ in the previous
example, $E_{0,0}$ is the cuspidal cubic $\{x^{2}z+y^{3}=0\}\subset\mathbb{P}_{\mathbb{R}}^{2}$
and the fact that the real locus of the corresponding surface $S=S(2,3,1)$
is homeomorphic to $\mathbb{R}^{2}$ can be seen directly as follows.
Since $\beta_{1}:X\rightarrow\mathbb{P}_{\mathbb{R}}^{2}$ consists
only of the blow-up of the point $p_{0}=[1:-1:1]$, $X(\mathbb{R})$
is a Klein bottle which we view as a circle bundle $\theta\colon X(\mathbb{R})\rightarrow S_{1}$
with fibers equal to the set of $\mathbb{R}$-rational point of the
lines through $p_{0}$ in $\mathbb{P}_{\mathbb{R}}^{2}$. The sets
$E_{0,1}(\mathbb{R})$ and $L_{z}(\mathbb{R})$ are two sections of
$\theta$ which do no intersect each other. On the other hand $E_{0,0}(\mathbb{R})$
is a connected closed curve which intersects $E_{0,1}(\mathbb{R})$
and $L_{z}(\mathbb{R})$ transversally in one point and in one point
with multiplicity $3$ respectively. 

\begin{figure}[!htb]
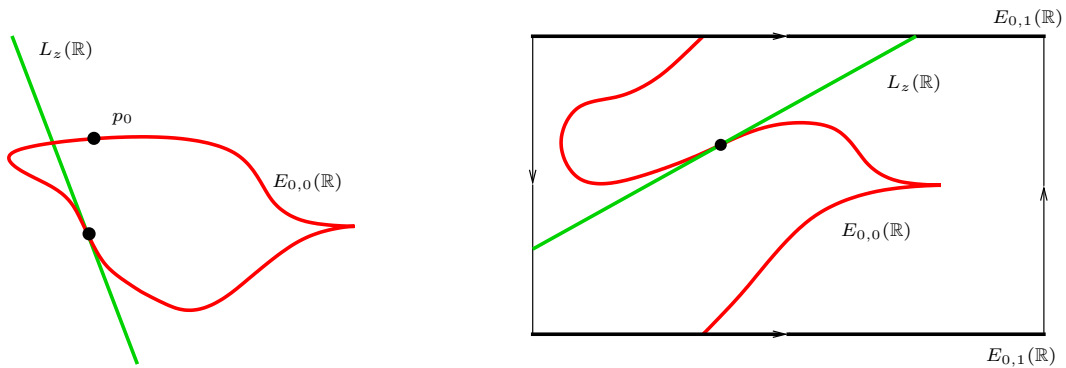

\input{cubic-f.tex} 
\hspace{2cm}
\input{cubic-Klein-f.tex} 
\caption{The initial arrangement in $\mathbb{P}^2_{\mathbb{R}}$ and the corresponding curves in the Klein bottle $X(\mathbb{R})$  }  
\label{fig:logkod1EX2}
\end{figure}

The pair $(X(\mathbb{R}),E_{0,0}(\mathbb{R})\cup L_{z}(\mathbb{R}))$
is thus homotopically equivalent to $(X(\mathbb{R}),\ell\cup E_{0,1}(\mathbb{R}))$
where $\ell$ is a fiber of $\theta$. So $S(\mathbb{R})$ is homotopically
equivalent to $X(\mathbb{R})$ minus a fiber and a section of $\theta$
whence to a disc, implying that $S(\mathbb{R})$ is homeomorphic to
$\mathbb{R}^{2}$.
\end{example}

\subsection{Homology euclidean planes of general type. }

In this subsection, we establish the real counterpart of a refined
procedure to construct homology euclidean homology planes of general
type due to tom Dieck and Petrie \cite{tDPe93} in the complex case.
We then study the possible real forms of certain known complex families.

\subsubsection{\label{sub:Cycle-cutting-General-Type}Cycle-cutting construction}

Let again $\mathbf{k}=\mathbb{R}$ or $\mathbb{C}$, let $D\subset\mathbb{P}_{\mathbb{\mathbf{k}}}^{2}$
be a reduced curve defined over $\mathbf{k}$, with geometrically
rational irreducible components, and let $\beta:V_{0}\rightarrow\mathbb{P}_{\mathbf{k}}^{2}$
be a minimal log-resolution of the pair $(V,D)$. Given a partition
$\mathcal{E}(\beta)=\mathcal{E}_{0}\sqcup\mathcal{E}_{1}$ of the
set $\mathcal{E}(\beta)$ of irreducible exceptional divisors of $\beta$,
with associated indicator function $\chi:\mathcal{E}(\beta)\rightarrow\{0,1\}$,
we let $R_{0}=\sum_{E\in\mathcal{E}_{0}}E$, $R_{1}=\sum_{E\in\mathcal{E}_{1}}E$
and we let $D(\chi)$ be the SNC divisor on $V_{0}$ defined by 
\[
D(\chi)=\beta_{*}^{-1}(D)+R_{1}\subset\beta^{-1}(D)=\beta_{*}^{-1}(D)+R_{1}+R_{0}.
\]

\begin{defn}
A \emph{cutting datum} for a pair $(\mathbb{P}_{\mathbb{\mathbf{k}}}^{2},D)$
as above consists of

a) A partition of $\mathcal{E}(\beta)$ with indicator function $\chi:\mathcal{E}(\beta)\rightarrow\{0,1\}$
such that $D(\chi)_{\mathbb{C}}$ is connected and 
\[
\mathrm{rk}\mathbb{Z}\langle(R_{0})_{\mathbb{C}}\rangle+s(D(\chi)_{\mathbb{C}})=\mathrm{rk}\mathbb{Z}\langle D_{\mathbb{C}}\rangle-1,
\]
where $s(D(\chi)_{\mathbb{C}})$ denote the number of independent
cycles of the dual graph $\Gamma(D(\chi)_{\mathbb{C}})=(\Gamma_{0}(D(\chi)_{\mathbb{C}}),\Gamma_{1}(D(\chi)_{\mathbb{C}}))$
of $D(\chi)_{\mathbb{C}}$. 

b) A subset $\Phi$ of the set of double points of $\mathrm{Supp}(D(\chi))$
such that the subgraph $(\Gamma_{0}(D(\chi)_{\mathbb{C}}),\Gamma_{1}(D(\chi)_{\mathbb{C}})\setminus\Phi_{\mathbb{C}})$
of $\Gamma(D(\chi)_{\mathbb{C}})$ is a tree. 
\end{defn}
\begin{parn} Given a cutting datum $(\chi,\Phi)$ for a pair $(\mathbb{P}_{\mathbb{\mathbf{k}}}^{2},D)$,
we denote by $\mathcal{B}(\mathbb{P}_{\mathbf{k}}^{2},D,\chi,\Phi)$
the set of isomorphy classes of birational morphisms $\alpha:V=V(\alpha)\rightarrow V_{0}$
restricting to isomorphisms over $V_{0}\setminus\Phi$ and such that
for every $p\in\Phi$, there exists an open neighborhood $V_{0,p}$
of $p$ over which $\alpha$ restricts to a subdivisional expansion
of $V_{0,p}$ with center at $p$ (see $\S$ \ref{ex:subdiv_exp}).
For every $(\alpha:V(\alpha)\rightarrow V_{0})\in\mathcal{B}(\mathbb{P}_{\mathbf{k}}^{2},D,\chi,\Phi)$,
we let $B(\alpha)=\alpha^{-1}(D(\chi))-\sum_{p\in\Phi}A_{0}(p)$ and
$S(\alpha)=V(\alpha)\setminus B(\alpha)$. 

\end{parn}
\begin{example}
\label{Ex:Ramanujam-Surf} (Ramanujam Surfaces \cite{Ram71}, \cite[Example 3.15]{tDPe93}).
Let $D\subset\mathbb{P}_{\mathbb{R}}^{2}=\mathrm{Proj}(\mathbb{R}[x,y,z])$
be the union of the cuspidal cubic $C=\{x^{2}z+y^{3}=0\}$ with its
osculating conic $Q$ at a general $\mathbb{R}$-rational point $q\in C(\mathbb{R})$.
So $Q$ is a smooth $\mathbb{R}$-rational conic intersecting $C$
at $q$ with multiplicity $5$ and transversally at a second $\mathbb{R}$-rational
point $p$.

\begin{figure}[!htb]
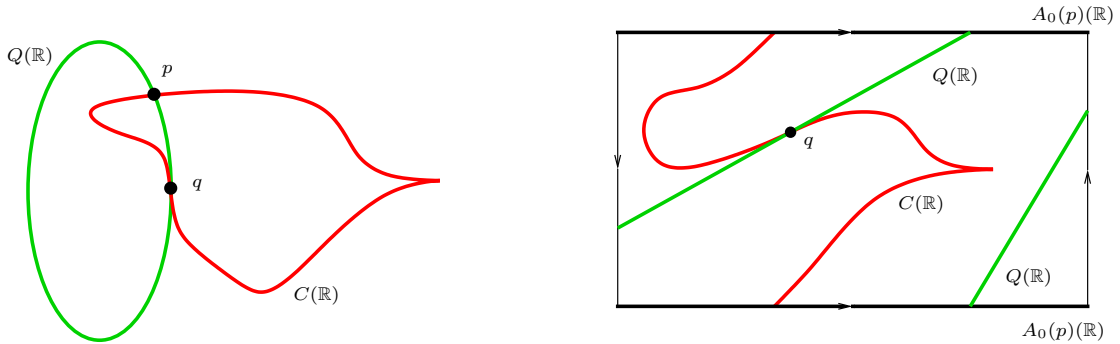

\input{Ramanujam-f.tex} 
\hspace{2cm}
\input{Ramanujam-Klein-f.tex} 
\caption{The Ramanujam surface for the choice $(\mu_{-},\mu_{+})=(1,1)$}  
\label{fig:Ramanujam}
\end{figure}

Let $1\leq\mu_{-}\leq\mu_{+}$ be a pair of integers such that $2\mu_{-}-3\mu_{+}=\pm1$,
let $\gamma:V'\rightarrow\mathbb{P}_{\mathbb{R}}^{2}$ be the subdivisional
expansion with center at the $\mathbb{R}$-rational point $(C\cap Q)_{p}$
with multiplicities $(\mu_{-},\mu_{+})$ and last exceptional divisor
$A_{0}(p)$, let $B'=\gamma^{-1}(D)-A_{0}(p)$ and let $S=V'\setminus B'$.
Choosing $r=2C-3Q$ as the generator of the kernel $R$ of the surjective
homomorphism $d:\mathbb{Z}\langle D\rangle\rightarrow\mathrm{Cl}(\mathbb{P}_{\mathbb{R}}^{2})$,
the choice of $(\mu_{-},\mu_{+})$ guarantees that the coefficient
of $A_{0}(p)$ in $\gamma^{*}(r)$ is equal to $2\mu_{-}-3\mu_{+}=\pm1$,
whence that the induced homomorphism $\varphi:R\rightarrow\mathbb{Z}\langle A_{0}(p)\rangle$
(see $\S$ \ref{par:curve_arrangement_setup}) is an isomorphism.
Since $D(\mathbb{R})$ is not empty, we deduce from Lemma \ref{lem:plane_arrangement_top_charac}
that $S_{\mathbb{C}}(\mathbb{C})$ is $\mathbb{Z}$-acyclic and that
$S(\mathbb{R})\approx\mathbb{R}^{2}$. In fact it is known that $S_{\mathbb{C}}(\mathbb{C})$
is even contractible. The reduced total transforms of $D$ and $B'$
in the minimal log-resolutions $\beta:V_{0}\rightarrow\mathbb{P}_{\mathbb{R}}^{2}$
and $\beta':V_{0}'\rightarrow V'$ of the pairs $(\mathbb{P}_{\mathbb{R}}^{2},D)$
and $(V',B')$ respectively have the following structures: 

\begin{figure}[!htb]
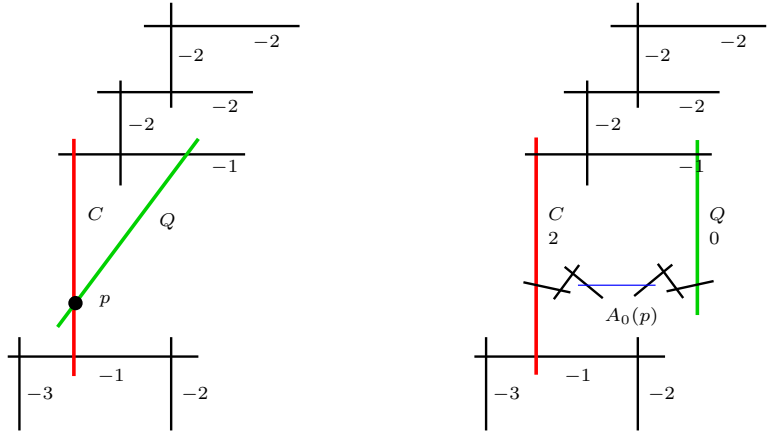

\input{Ramanujam-log-f.tex} 
\hspace{2cm}
\input{Ramanujam-subdiv-f.tex} 
\caption{Resolved boundaries}  
\label{fig:Ramanujam-reso}
\end{figure}

So $S$ belongs to $\mathcal{B}(\mathbb{P}_{\mathbb{R}}^{2},D,\chi,\Phi)$
where $(\chi,\Phi)=(\mathbf{1}_{\mathcal{E}(\beta)},\{q\})$. Clearly,
the pair $(V_{0}',(\beta')^{-1}(B')$ cannot be birationally equivalent
to either $(\mathbb{P}_{\mathbb{R}}^{2},\mathrm{Line})$ or a pair
$(V,B)$ described in $\S$\ref{par:Kod1-Construction} via a birational
map restricting to an isomorphism on $S$. So Theorem \ref{thm:Fake-Kodaira-1}
and the fact that $\mathbb{A}_{\mathbb{R}}^{2}$ is the only homology
euclidean plane of Kodaira dimension $\leq0$ imply that $S$ is a
homology euclidean plane of general type. 
\end{example}
\noindent Theorem A in \cite{tDPe93} admits the following real counterpart: 
\begin{thm}
\label{thm:Kod2-arrangement} Let $\mathbf{k}=\mathbb{R}$ or $\mathbb{C}$
and let $S$ be a smooth $\mathbf{k}$-rational surface of general
type such that $S_{\mathbb{C}}(\mathbb{C})$ is $\mathbb{Q}$-acyclic.
In the case where $\mathbf{k}=\mathbb{R}$, assume further that $S(\mathbb{R})$
is non compact. Then there exists an arrangement $D$ of reduced geometrically
rational curves in $\mathbb{P}_{\mathbf{k}}^{2}$, such that $S$
is isomorphic over $\mathbf{k}$ to the surface $S(\alpha)$ associated
with a birational morphism $(\alpha:V(\alpha)\rightarrow W)\in\mathcal{B}(\mathbb{P}_{\mathbf{k}}^{2},D,\chi,\Phi)$
for a suitable cutting datum $(\chi,\Phi)$. \end{thm}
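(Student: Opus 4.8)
The plan is to start from the arrangement and log-resolution furnished by Theorem \ref{thm:plan_arrangement_structure}, and then to reorganize the blow-ups so as to exhibit the boundary as the outcome of a cutting datum: a minimal log-resolution of a plane curve arrangement, followed by subdivisorial expansions at the double points where the cycles get cut.

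Since $S_{\mathbb{C}}$ is $\mathbb{Q}$-acyclic and, in the real case, $S(\mathbb{R})$ is non-compact, Theorem \ref{thm:plan_arrangement_structure} provides an arrangement $D\subset\mathbb{P}^{2}_{\mathbf{k}}$ of reduced geometrically rational curves, a log-resolution $\tau\colon V\to\mathbb{P}^{2}_{\mathbf{k}}$ of the pair $(\mathbb{P}^{2}_{\mathbf{k}},D)$, and a geometrically rational subtree $B$ of $\tau^{-1}(D)$ with $S\simeq V\setminus B$ satisfying conditions a) and b) of $\S$\ref{par:curve_arrangement_setup}. After contracting the possible $(-1)$-curves contained in $B$ along which the boundary stays SNC, I may assume in addition that $B$ is SNC-minimal over $\mathbf{k}$, modifying $D$ and $\tau$ accordingly.

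Next I would factor $\tau$ through the minimal log-resolution $\beta\colon V_{0}\to\mathbb{P}^{2}_{\mathbf{k}}$ of $(\mathbb{P}^{2}_{\mathbf{k}},D)$, writing $\tau=\beta\circ\alpha$ with $\alpha\colon V\to V_{0}$. I then define the partition $\chi$ by placing an exceptional divisor of $\beta$ into $\mathcal{E}_{1}$ exactly when its proper transform is a component of $B$, and into $\mathcal{E}_{0}$ otherwise; this produces the SNC divisor $D(\chi)=\beta_{*}^{-1}(D)+R_{1}$ on $V_{0}$, which is $\sigma$-invariant in the real case because $B$, $D$ and $\tau$ are defined over $\mathbb{R}$. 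The heart of the argument is to prove that $\alpha$ consists exactly of subdivisorial expansions centered at a finite $\sigma$-stable set $\Phi$ of double points of $D(\chi)_{\mathbb{C}}$, in such a way that deleting the valley curves $A_{0}(p)$ recovers the boundary, namely $B=\alpha^{-1}(D(\chi))-\sum_{p\in\Phi}A_{0}(p)=B(\alpha)$. This is where the general type hypothesis is used: the SNC-minimality of $B$ together with the bigness of $K_{V}+B$ forces each connected component of the exceptional locus of $\alpha$ to be the Hirzebruch--Jung chain of a subdivisorial expansion at a node of $D(\chi)_{\mathbb{C}}$ lying on one of the cycles that must be broken in order to turn the total transform $\alpha^{-1}(D(\chi))$ into the tree $B$.

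It then remains to check the two defining conditions of a cutting datum. The connectivity of $D(\chi)_{\mathbb{C}}$ and the identity $\mathrm{rk}\,\mathbb{Z}\langle(R_{0})_{\mathbb{C}}\rangle+s(D(\chi)_{\mathbb{C}})=\mathrm{rk}\,\mathbb{Z}\langle D_{\mathbb{C}}\rangle-1$ follow by combining the rank count of $\S$\ref{par:curve_arrangement_setup} b) for the pair $(V,B)$ with the bookkeeping that identifies the number $s(D(\chi)_{\mathbb{C}})$ of independent cycles cut by $\Phi$ with the number of exceptional curves of $\alpha$ removed from the boundary; and the condition that $(\Gamma_{0}(D(\chi)_{\mathbb{C}}),\Gamma_{1}(D(\chi)_{\mathbb{C}})\setminus\Phi_{\mathbb{C}})$ be a tree is exactly the assertion that $B$ is a tree, which holds by construction. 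I expect the main obstacle to be this middle step: controlling the local analytic type of $\alpha$ above each node of $D(\chi)$ and certifying that the blow-ups performed there are genuine subdivisorial expansions rather than more general blow-up chains; in the real case this is compounded by the need, handled through the Galois-equivariant formalism of Proposition \ref{prop:Galois_real_locus_charac}, to choose $\Phi$ and $\chi$ $\sigma$-invariantly so that the resulting cutting datum is defined over $\mathbb{R}$.
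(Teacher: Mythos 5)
Your reduction is the same as the paper's: invoke Theorem \ref{thm:plan_arrangement_structure}, factor the resulting log-resolution as $\tau=\beta\circ\alpha$ through the minimal log-resolution $\beta$, set $\chi(E)=1$ exactly when the proper transform of $E$ is a component of $B$, and let $\Phi$ be the image of the exceptional locus of $\alpha$. Everything beyond this -- that $\Phi$ consists of double points of $D(\chi)$, that $\alpha$ is a genuine subdivisorial expansion over each point of $\Phi$ with the removed curves being exactly the $A_{0}(p)$, and that $(\chi,\Phi)$ satisfies the two conditions of a cutting datum -- is the actual content of the theorem, and the paper disposes of it by stating that the verification is a verbatim transcription of Proposition 2.3 of \cite{tDPe93}. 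In your write-up this verification is exactly the step you yourself flag as ``the main obstacle'' and leave as an assertion. This is a genuine gap, and it sits precisely where the hypothesis $\kappa(S)=2$ must do its work: the cutting-datum format forbids removed exceptional curves lying over \emph{smooth} points of $D(\chi)$, yet such curves do occur for $\mathbb{Q}$-acyclic surfaces of Kodaira dimension $1$ (step 3 of the construction in $\S$ \ref{par:Kod1-Construction}, where the removed curve $A_{0}(p_{0})$ lies over a general point $p_{0}\in E_{0,0}$), so any correct argument must convert such a configuration into a contradiction with general type.

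The mechanism you propose for closing the gap is moreover doubtful on two counts. First, the preprocessing step ``contract $(-1)$-curves of $B$ so that $B$ becomes SNC-minimal, modifying $D$ and $\tau$ accordingly'' does not stay inside the framework of $\S$ \ref{par:curve_arrangement_setup}: what must remain SNC is the full total transform $\tau^{-1}(D)$, not just $B$, and the contracted surface must still be a log-resolution of a plane curve arrangement; a $(-1)$-curve of $B$ which meets a component of $\mathcal{E}_{0}$, or which is the proper transform of a component of $D$, cannot be contracted without destroying this structure. The pairs furnished by Theorem \ref{thm:plan_arrangement_structure} are inherently far from SNC-minimal, and the paper never minimalizes them. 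Second, ``SNC-minimality of $B$ together with bigness of $K_{V}+B$'' is not visibly what excludes the bad configurations: bigness of the log-canonical divisor is a global condition that does not by itself control whether the centers of $\alpha$ are nodes of $D(\chi)$ rather than smooth points, nor whether the local blow-up trees over those nodes are chains of subdivisorial type rather than branched trees; it is exactly this local analysis that Proposition 2.3 of \cite{tDPe93} carries out. Either reproduce that argument or cite it; as it stands the mathematical heart of the proof is missing.
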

\begin{proof}
By virtue of Proposition \ref{thm:plan_arrangement_structure}, there
exists an arrangement $D$ of geometrically reduced and geometrically
rational curves in $\mathbb{P}_{\mathbf{k}}^{2}$ and a birational
morphism $\alpha:V\rightarrow\mathbb{P}_{\mathbf{k}}^{2}$ with the
property that $\tau^{-1}(D)$ is SNC and that $B$ is a subtree of
$\tau^{-1}(D)$ containing the proper transform $\tau_{*}^{-1}D$
of $D$. Furthermore, the image of the exceptional locus of $\tau$
is supported on $D$ and, because $S_{\mathbb{Q}}$ is $\mathbb{Z}$-acyclic,
we have $\mathrm{rk}(\mathbb{Z}\langle\tau^{-1}(D)_{\mathbb{C}}\rangle)-\mathrm{rk}(\mathbb{Z}\langle B_{\mathbb{C}}\rangle)=\mathrm{rk}(\mathbb{Z}\langle D_{\mathbb{C}}\rangle)-1$.
Since $\tau:V\rightarrow\mathbb{P}_{\mathbf{k}}^{2}$ is a log-resolution
of the pair $(\mathbb{P}_{\mathbf{k}}^{2},D)$, there exists a unique
birational morphism $\alpha:V\rightarrow V_{0}$ such that $\alpha_{*}\tau^{-1}(D)=\beta^{-1}(D)$.
The function $\chi:\mathcal{E}(\beta)\rightarrow\{0,1\}$ defined
by $\chi(E)=1$ if and only if $E\in\alpha_{*}B$ defines a partition
of $\mathcal{E}(\beta)$ and letting $\Phi$ be the image of the exceptional
locus of $\alpha$, it is enough to check that $(\chi,\Phi)$ is a
cutting datum for the pair $(\mathbb{P}_{\mathbb{R}}^{2},D)$ for
which $(\alpha:V\rightarrow V_{0})$ belongs to $\mathcal{B}(\mathbb{P}_{\mathbb{R}}^{2},D,\chi,\Phi)$.
The proof is a verbatim of that of Proposition 2.3 in \cite{tDPe93}.
\end{proof}

\subsubsection{\label{sub:Real-forms-Kod2} Real forms of homology euclidean planes
of general type}

It follows from the proof of Theorem \ref{thm:Fake-Kodaira-1} that a homology
euclidean plane $S$ of Kodaira dimension $1$ does not admit non trivial
real forms. Furthermore, it always admits a smooth projective completion
$(V,B)$ obtained from $\mathbb{P}_{\mathbb{R}}^{2}$
by blowing-up $\mathbb{R}$-rational points only and whose rational
boundary tree consists of $\mathbb{R}$-rational curves only. Here
we construct examples of homology euclidean plane of general type
for which both properties fail. 

\begin{parn} \label{par:form-construction} Consider the nodal cubic
curves $C_{1},C_{2}\subset\mathbb{P}_{\mathbb{R}}^{2}$ with respective
equations $(x-y)(x^{2}+y^{2})-xyz=0$ and $(x-y)(x^{2}-4y^{2})-xyz=0$.
Both have the $\mathbb{R}$-rational point $[1:1:0]$ as a flex but
$C_{1}$ has a second $\mathbb{C}$-rational flex $C_{1}\cap\{x^{2}+y^{2}=0\}$
while $C_{2}$ possesses two other $\mathbb{R}$-rational flexes $[2:1:0]$
and $[-2:1:0]$. So $C_{1}$ and $C_{2}$ are not $\mathbb{R}$-isomorphic
but their complexifications are both projectively equivalent over
$\mathbb{C}$ to the curve with equation $x^{3}+y^{2}-xyz=0$. The
projective duals $\Gamma_{1}$ and $\Gamma_{2}$ of $C_{1}$ and $C_{2}$
respectively are cuspidal quartics, which are nontrivial $\mathbb{R}$-forms
of each other, with projectively equivalent complexifications. They
both have an ordinary $\mathbb{R}$-rational cusp $p_{0}$ corresponding
to the common $\mathbb{R}$-rational flex of $C_{1}$ and $C_{2}$,
and either a second $\mathbb{C}$-rational ordinary cusp $q$ for
$\Gamma_{1}$, or a pair of additional $\mathbb{R}$-rational ordinary
cusps $q_{1}$ and $q_{2}$ for $\Gamma_{2}$. The tangent $L$ to
$\Gamma_{1}$ (resp. $\Gamma_{2})$ at $p_{0}$ intersects $\Gamma_{1}$
(resp. $\Gamma_{2}$) transversally in a second $\mathbb{R}$-rational
point $p$.

\begin{figure}[!htb]
\input{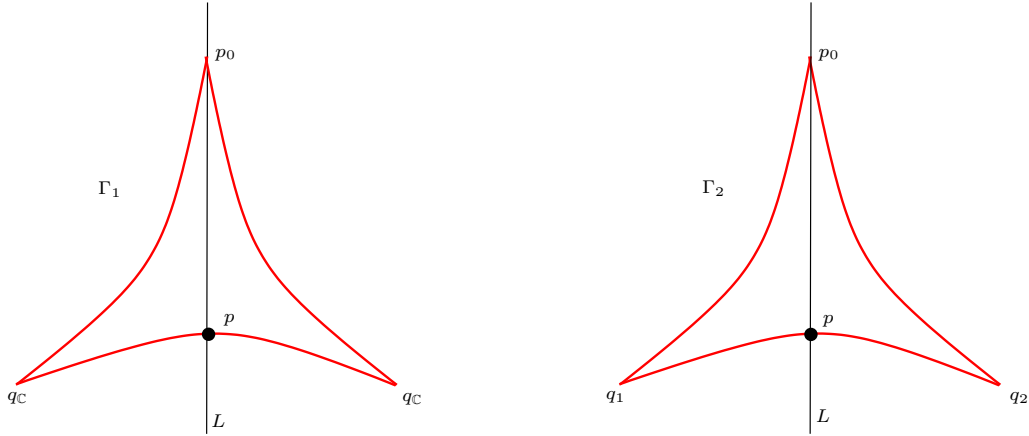} 
\caption{Real forms of a tricuspidal quartic}  
\label{fig:tricusp-quartics}
\end{figure}

Let $D_{i}=\Gamma_{i}\cup L$, $i=1,2$ and let $1\leq\mu_{-}\leq\mu_{+}$
be a pair of integers such that $\mu_{-}-4\mu_{+}=\pm1$. Then let
$\gamma_{i}:V_{i}'\rightarrow\mathbb{P}_{\mathbb{R}}^{2}$, $i=1,2$,
be the projective surface obtained from $\mathbb{P}_{\mathbb{R}}^{2}$
by the subdivisional expansion with center at the $\mathbb{R}$-rational
point $(\Gamma_{i}\cap L)_{p}$ with multiplicities $(\mu_{-},\mu_{+})$
and last exceptional divisor $A_{0}(p)$, let $B_{i}'=\gamma_{i}^{-1}(D_{i})-A_{0}(p)$
and let $S_{i}=V_{i}'\setminus B_{1}'$. Letting $W_{i}\rightarrow V_{i}'$
be a minimal resolution of the pair $(V'_{i},B_{i}')$, the reduced
total transform $B_{2}$ of $B_{2}'$ consists of $\mathbb{R}$-rational
curves only, while the reduced total transform $B_{1}$ of $B_{1}'$
contains a chain of $\mathbb{C}$-rational curves arising from the
resolution of the $\mathbb{C}$-rational cuspidal point $q$. 

\begin{figure}[!htb]
\input{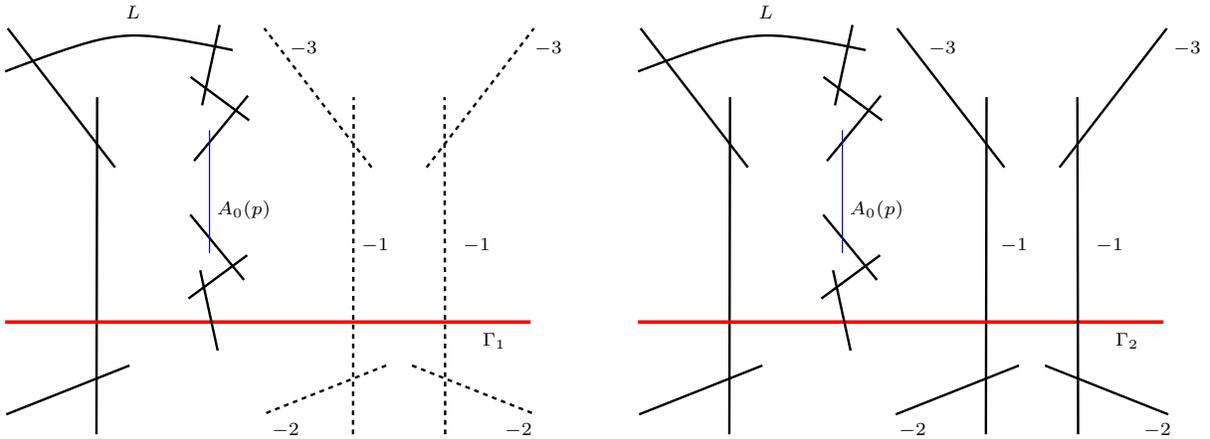} 
\caption{Total transforms of the boundaries in the miminal resolution}  
\label{fig:tricusp-reso}
\end{figure}

\noindent The complexifications $B_{i,\mathbb{C}}$ of $B_{i}$ have
the same structure: every irreducible component of $B_{2,\mathbb{C}}$
is invariant under the real structure $\sigma$, while $\sigma$ acts
on $B_{1,\mathbb{C}}$ by permuting the two ``cuspidal branches''
$[-3,-1,-2]$ and leaving all other irreducible component invariant. 

\end{parn}
\begin{prop}
\label{prop:Forms-Kod2}With the notation above, the following hold:

1) The surfaces $S_{1}$ and $S_{2}$ are non isomorphic homology
euclidean planes of general type, with isomorphic complexifications
$(S_{1})_{\mathbb{C}}$ and $(S_{2})_{\mathbb{C}}$.

2) The surface $S_{1}$ does not admit any smooth SNC-minimal completion
$(V,B)$ defined over $\mathbb{R}$ for which
$B$ consists of $\mathbb{R}$-rational curves only. \end{prop}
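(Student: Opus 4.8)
The plan is to treat the two parts separately, both resting on an analysis of the boundary divisor of the SNC-minimal completion together with the action induced on it by the real structure $\sigma$.

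For the first part, I would first identify $S_1$ and $S_2$ as homology euclidean planes exactly as in Example~\ref{Ex:Ramanujam-Surf}. Writing $D_i=\Gamma_i\cup L$, the homomorphism $d_\mathbb{C}\colon\mathbb{Z}\langle D_{i,\mathbb{C}}\rangle\to\mathrm{Cl}(\mathbb{P}^2_\mathbb{C})$ sends the quartic to $4$ and the line to $1$, hence is surjective with kernel $R=\mathbb{Z}\cdot(\Gamma_i-4L)$; since only $A_0(p)$ is removed we have $\mathbb{Z}\langle\mathcal{E}_0\rangle=\mathbb{Z}\langle A_0(p)\rangle$, and the coefficient of $A_0(p)$ in $\gamma_i^{*}(\Gamma_i-4L)$ equals $\mu_--4\mu_+=\pm1$, so the induced map $\varphi$ is an isomorphism. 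Lemma~\ref{lem:plane_arrangement_top_charac}(b) then gives that $(S_i)_\mathbb{C}(\mathbb{C})$ is $\mathbb{Z}$-acyclic, and since $D_i$ is defined over $\mathbb{R}$ with $D_i(\mathbb{R})\neq\emptyset$ and $\varphi$ is already a $\mathbb{Z}$-isomorphism, part (c) gives $S_i(\mathbb{R})\approx\mathbb{R}^2$. The two complexifications are isomorphic because $\Gamma_{1,\mathbb{C}}$ and $\Gamma_{2,\mathbb{C}}$ are projectively equivalent and $L$ is in each case the tangent at a cusp; composing with the automorphism group $\mathfrak{S}_3$ permuting the three cusps of the tricuspidal quartic one matches $(\mathbb{P}^2_\mathbb{C},D_{1,\mathbb{C}})$ with $(\mathbb{P}^2_\mathbb{C},D_{2,\mathbb{C}})$, and the identical subdivisorial data yield $(S_1)_\mathbb{C}\cong(S_2)_\mathbb{C}$. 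This surface is of general type (by the same exclusion argument as in Example~\ref{Ex:Ramanujam-Surf}, or because it is one of the general type homology planes of \cite{tDPe93}), and as $\kappa$ is invariant under extension of the base field, $\kappa(S_i)=2$.

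For the non-isomorphy I would use that a smooth affine surface of log general type admits a \emph{unique} SNC-minimal smooth completion, and that this completion is functorial: every isomorphism of the open surfaces extends to it. Hence the weighted dual graph of the minimal boundary, together with the action induced by $\sigma$, is an invariant of the $\mathbb{R}$-isomorphism class. As recorded in \S\ref{par:form-construction}, for $S_2$ every irreducible component of the minimal boundary is $\sigma$-invariant, whereas for $S_1$ the real structure exchanges the two conjugate cuspidal chains $[-3,-1,-2]$ arising from the resolution of the non-real pair of cusps $q,\overline q$ of $\Gamma_1$. An $\mathbb{R}$-isomorphism $S_1\to S_2$ would complexify to a $\sigma$-equivariant isomorphism of the common minimal completion, hence carry $\sigma$-orbits of boundary components to $\sigma$-orbits of equal cardinality; this is impossible, since $\sigma$ has an orbit of size two on the $S_1$ side and only fixed points on the $S_2$ side. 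Therefore $S_1\not\cong S_2$, completing Part~1.

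For Part~2 the same invariance is applied in reverse. Let $S_1\hookrightarrow(V,B)$ be any SNC completion defined over $\mathbb{R}$; its complexification is a $\sigma$-invariant SNC completion of $(S_1)_\mathbb{C}$, and the canonical contraction onto the unique SNC-minimal completion $(\tilde V,\tilde B)$ is $\sigma$-equivariant. Since $\tilde B$ contains the $\sigma$-exchanged pair of cuspidal chains, their total transforms in $B_\mathbb{C}$ form a $\sigma$-exchanged pair of components, so the corresponding irreducible component of $B$ over $\mathbb{R}$ is geometrically reducible and in particular not an $\mathbb{R}$-rational curve. As this holds for every completion over $\mathbb{R}$, it holds a fortiori for every SNC-minimal one. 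The main obstacle in this program is justifying the functoriality and $\sigma$-equivariance of the passage to the SNC-minimal completion in the log general type case, and checking that the exchanged cuspidal chains genuinely persist in the minimal boundary rather than being contracted away; once these points are secured, the distinction between the two real structures reduces to the elementary orbit-counting used above.
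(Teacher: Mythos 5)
Your verification that $S_{1}$ and $S_{2}$ are homology euclidean planes of general type with isomorphic complexifications matches the paper's argument: same generator $\Gamma_{i}-4L$ of the kernel $R_{i}$, same appeal to Lemma \ref{lem:plane_arrangement_top_charac}, same exclusion argument for $\kappa=2$, and your remark that one should compose a projective equivalence $\Gamma_{1,\mathbb{C}}\simeq\Gamma_{2,\mathbb{C}}$ with the symmetry group permuting the three cusps so as to match the marked cusp and its tangent is a correct elaboration of the paper's ``projectively equivalent, hence isomorphic by construction''. The real content, however, is the non-isomorphy and assertion 2), and there your proof has exactly the gap you flag yourself: everything rests on the claim that a surface of log general type admits a \emph{unique} SNC-minimal completion to which every isomorphism of interiors extends $\sigma$-equivariantly. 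This is neither proved nor cited, and it is not a routine fact: it fails badly in lower Kodaira dimension (surfaces carrying $\mathbb{A}^{1}$- or $\mathbb{A}_{*}^{1}$-fibrations admit infinitely many non-isomorphic SNC-minimal completions, related by elementary transformations along boundary components of self-intersection $0$, as the paper's own Lemma \ref{lem:Chains-normal-forms} and Example \ref{Ex:Q-acyclic-boundary-chains} illustrate), so any proof must use $\kappa=2$ to exclude such fibered modifications of the boundary --- which is precisely the verification your proposal skips.

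The paper assumes no such general theorem; it proves the needed instance by hand. Supposing $(V,B)$ is an SNC-minimal completion of $S_{1}$ over $\mathbb{R}$ with all components $\mathbb{R}$-rational, the induced birational map $\varphi:(V_{1,\mathbb{C}},B_{1,\mathbb{C}})\dashrightarrow(V_{\mathbb{C}},B_{\mathbb{C}})$ cannot be an isomorphism of pairs (the $\sigma$-actions on the component sets differ), so it is strictly birational; taking a minimal resolution $V_{1,\mathbb{C}}\stackrel{\alpha}{\leftarrow}X\stackrel{\alpha'}{\rightarrow}V_{\mathbb{C}}$ defined over $\mathbb{R}$, the morphism $\alpha'$ must contract $\sigma$-invariant $(-1)$-curves or disjoint conjugate pairs of $(-1)$-curves supported on the strict transform of $B_{1,\mathbb{C}}$; by the explicit weighted graph of $B_{1,\mathbb{C}}$ (Figure \ref{fig:tricusp-reso}) the only candidates are the last exceptional divisors over the three cusps of $\Gamma_{1,\mathbb{C}}$, and contracting any of them destroys SNC-ness of the image of $\alpha^{-1}(B_{1,\mathbb{C}})$, a contradiction. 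This resolution analysis is exactly what your appeal to ``uniqueness plus functoriality'' replaces; to complete your proof you would either have to carry it out (at which point you have reproduced the paper's proof), or supply a proof or reference for the general extension theorem in the log general type case, including its $\sigma$-equivariant refinement and the check that $(V_{1},B_{1})$ is SNC-minimal over $\mathbb{C}$ (it contains $(-1)$-curves, but each is a branching component, which is why no contraction preserves SNC). Note finally a structural difference: the paper deduces the non-isomorphy in 1) as a corollary of 2), using that $(V_{2},B_{2})$ is an SNC-minimal completion of $S_{2}$ with all-$\mathbb{R}$-rational boundary, so closing this single gap would repair both halves of your argument at once.
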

\begin{proof}
Choosing $r_{i}=\Gamma_{i}-4L$ as the generator of the kernel $R_{i}$
of the surjective homomorphism $\mathbb{Z}\langle(D_{i})\rangle\rightarrow\mathrm{Cl}(\mathbb{P}_{\mathbb{R}}^{2})$,
the choice of $(\mu_{-},\mu_{+})$ guarantees that the coefficient
of $A_{0}(p)$ in $\gamma_{i}^{*}(r_{i})$ is equal to $\mu_{-}-4\mu_{+}=\pm1$,
whence that the induced homomorphism $\varphi_{i}:R_{i}\rightarrow\mathbb{Z}\langle A_{0}(p)\rangle$(see
$\S$ \ref{par:curve_arrangement_setup}) is an isomorphism. Since
$D_{i}(\mathbb{R})$ is not empty, we deduce from Lemma \ref{lem:plane_arrangement_top_charac}
that $S_{i,\mathbb{C}}(\mathbb{C})$ is $\mathbb{Z}$-acyclic and
that $S_{i}(\mathbb{R})\approx\mathbb{R}^{2}$. The fact that $S_{i}$
is of general type follows from the same argument as in Example \ref{Ex:Ramanujam-Surf},
by comparing the structure of the minimal rational boundary tree $B_{i}$
in Figure \ref{fig:tricusp-reso} above with those described in $\S$
\ref{par:Kod1-Construction}. Since $\Gamma_{1,\mathbb{C}}$ and $\Gamma_{2,\mathbb{C}}$
are projectively equivalent, $S_{1,\mathbb{C}}$ and $S_{2,\mathbb{C}}$
are isomorphic by construction. Now suppose that $S_{1}$ admits smooth
projective completion $(V,B)$ defined over $\mathbb{R}$
for which $B$ is SNC-minimal and consists of $\mathbb{R}$-rational curves only. Then
there would exists a birational map of pairs $\varphi:(V_{1,\mathbb{C}},B_{1,\mathbb{C}})\dashrightarrow(V_{\mathbb{C}},B_{\mathbb{C}})$
defined over $\mathbb{R}$ and restricting to an isomorphism $V_{1,\mathbb{C}}\setminus B_{1,\mathbb{C}}\stackrel{\simeq}{\rightarrow}V_{\mathbb{C}}\setminus B_{\mathbb{C}}$.
Since every irreducible component of $B$ is $\mathbb{R}$-rational,
the real structure $\sigma$ on $V_{\mathbb{C}}$ acts trivially on
the set of irreducible components $B_{\mathbb{C}}$. So $\varphi$
cannot be an isomorphism of pairs because, as observed before, the
real structure $\sigma$ on $V_{1,\mathbb{C}}$ acts non trivially
on the set of irreducible components of $B_{1,\mathbb{C}}$. So $\varphi$
must be strictly birational and, letting $V_{1,\mathbb{C}}\stackrel{\alpha}{\leftarrow}X\stackrel{\alpha'}{\rightarrow}V_{\mathbb{C}}$
be a minimal resolution of $\varphi$ defined over $\mathbb{R}$,
the morphism $\alpha':X\rightarrow V_{\mathbb{C}}$ would consists
of a sequence of blow-downs of either $\sigma$-invariant $(-1)$-curves
or pairs of disjoint $(-1)$-curves exchanged by $\sigma$ supported
on the strict transform of $B_{1,\mathbb{C}}$ by $\alpha$. The structure
of $B_{1,\mathbb{C}}$ depicted in Figure \ref{fig:tricusp-reso}
above implies that the only possible such curves are the proper transforms
of the last exceptional divisors of the minimal log-resolution of
the pair $(V'_{1,\mathbb{C}},B'_{1,\mathbb{C}})$ over the three singular
points of $\Gamma_{1,\mathbb{C}}$. But the image of $\alpha^{-1}(B_{1,\mathbb{C}})$
after their contraction would no longer be SNC, which is excluded
since $B_{\mathbb{C}}$ is an SNC divisor by hypothesis. So the boundary
$B$ of every smooth projective SNC-minimal completion $(V,B)$ of $S_{1}$
must contain at least one non $\mathbb{R}$-rational component, which
shows 2). Since the pair $(V_{2},B_{2})$ constructed in $\S$ \ref{par:form-construction}
is a smooth projective SNC-minimal completion of $S_{2}$ for which $B_{2}$
consists of $\mathbb{R}$-rational curves only, we deduce in turn
that $S_{1}$ and $S_{2}$ are not isomorphic as schemes over $\mathbb{R}$. \end{proof}
\begin{rem}
The surfaces $S_{1}$ and $S_{2}$ above can also be obtained by a
cutting-cycle construction from arrangements $\Delta_{1}$ and $\Delta_{2}$
in $\mathbb{P}_{\mathbb{R}}^{2}$ consisting of lines and conics and
whose complexifications are both projectively equivalent to the following
arrangement $\Delta$ of $7$ lines with $3$ double points and $6$
triple points (see \cite{tDPe90-2}). 

\begin{figure}[!htb]
\input{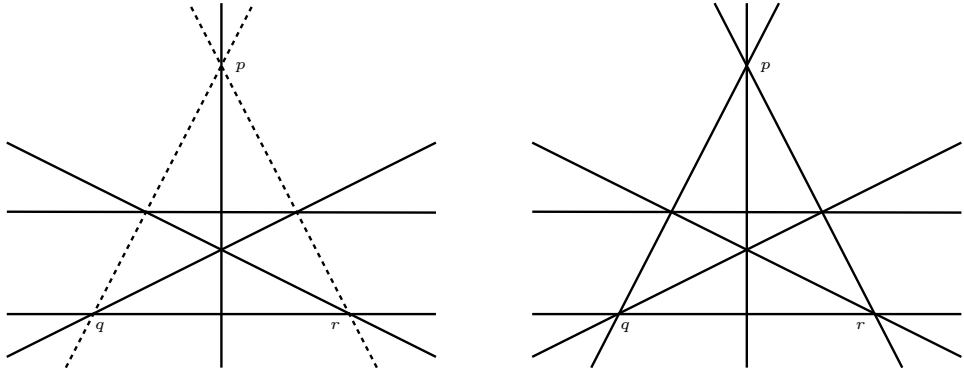} 
\caption{Arrangement of lines associated to tricuspidal quartics}  
\label{fig:tricusp-line-arrangement}
\end{figure}

\noindent In the case of $S_{1}$ the real structure $\sigma$ on
$\mathbb{P}_{\mathbb{C}}^{2}$ acts on $\Delta$ by exchanging the
lines $pq$ and $pr$ and fixing the others while in the case of $S_{2}$,
the real structure acts trivially on $\Delta$. 
\end{rem}

\section{$\mathbb{Q}$-acyclic euclidean planes of negative Kodaira dimension }

This section is devoted to the study of $\mathbb{Q}$-acyclic euclidean
planes $S$ of negative Kodaira dimension. We first give a complete
classification of these up to isomorphisms of schemes over $\mathbb{R}$.
More precisely, we show that they all admit an $\mathbb{A}^{1}$-fibration
$\pi:S\rightarrow\mathbb{A}_{\mathbb{R}}^{1}$ defined over $\mathbb{R}$,
that is, a surjective morphism with generic fiber isomorphic
to the affine line of the function field of $\mathbb{A}_{\mathbb{R}}^{1}$
and we characterize the $\mathbb{Q}$-acyclicity of $S_{\mathbb{C}}(\mathbb{C})$
and the property that $S(\mathbb{R})\approx\mathbb{R}^{2}$ in terms
of the degenerate fibers of $\pi$. 

We then consider the classification of such euclidean planes up to
$\mathbb{R}$-biregular birational equivalence. Recall that a rational
map $\varphi:X\dashrightarrow X'$ between real algebraic varieties
is called $\mathbb{R}$-\emph{regular} if it is regular at every $\mathbb{R}$-rational
point of $X$, equivalently, the real locus of $X$ is contained in
the domain of definition of $\varphi$. We say that $\varphi$ is
$\mathbb{R}$-\emph{biregular} if it admits an $\mathbb{R}$-regular
birational inverse $\psi$. If this holds the induced morphisms $\varphi(\mathbb{R}):X(\mathbb{R})\rightarrow X'(\mathbb{R})$
and $\psi(\mathbb{R}):X'(\mathbb{R})\rightarrow X(\mathbb{R})$ are
diffeomorphisms for the euclidean topology, and are inverse to each
other. 

We establish that a large class of $\mathbb{Q}$-acyclic euclidean
plane of negative Kodaira dimension are $\mathbb{R}$-regularly birationally equivalent to the affine plane $\mathbb{A}_{\mathbb{R}}^{2}$. 

\subsection{Structure of $\mathbb{Q}$-acyclic euclidean planes of negative Kodaira
dimension}

A deep result of Miyanishi-Sugie and Fujita (see e.g. \cite[Chapter 2, Theorem 2.1.1]{MiyBook}) asserts that every smooth complex affine surface of negative Kodaira dimension admits an $\mathbb{A}^{1}$-fibration over a smooth curve. But it is wrong in general that a smooth real affine surface of negative Kodaira dimension admits such an $\mathbb{A}^{1}$-fibration defined over $\mathbb{R}$. For instance, the complement $S$ of a smooth conic $Q\subset\mathbb{P}^2_{\mathbb{R}}$ without $\mathbb{R}$-rational point has negative Kodaira dimension but no $\mathbb{A}^1$-fibration defined over $\mathbb{R}$. Indeed if $S$ admitted such an $\mathbb{A}^{1}$-fibration $\pi:S\rightarrow C$ over a smooth curve defined over $\mathbb{R}$ then $C$ would be geometrically rational, hence rational since $S$ has $\mathbb{R}$-rational points. But then the closure in $\mathbb{P}^2_{\mathbb{R}}$ of a fiber of $\pi$ over a general $\mathbb{R}$-rational point of $C$ would intersect $Q$ in a unique point, necessarily $\mathbb{R}$-rational, which is impossible. 

This subsection is devoted to the proof of the following characterization which shows in particular that $\mathbb{Q}$-acyclic euclidean planes of negative Kodaira dimension do admit $\mathbb{A}^{1}$-fibrations defined over $\mathbb{R}$. 

\begin{thm}
\label{thm:Q-acyclic-Neg-Desc} For a smooth geometrically integral
surface $S$ defined over $\mathbb{R}$ the following are equivalent:

1) $S$ is a $\mathbb{Q}$-acyclic euclidean plane of negative Kodaira dimension,

2) $S$ admits an $\mathbb{A}^{1}$-fibration $\pi:S\rightarrow\mathbb{A}_{\mathbb{R}}^{1}$
defined over $\mathbb{R}$, whose closed degenerate fibers are all
isomorphic to the affine line over the corresponding residue fields
when equipped with their reduced structure and whose degenerate fibers
over $\mathbb{R}$-rational points of $\mathbb{A}_{\mathbb{R}}^{1}$
all have odd multiplicities. 
\end{thm}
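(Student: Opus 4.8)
The plan is to prove the two implications separately, relying on the structure theory of complex $\mathbb{Q}$-homology planes of negative Kodaira dimension together with a Galois-descent argument and a careful analysis of the real loci of the degenerate fibers.

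For the implication 2) $\Rightarrow$ 1), an affine surface carrying an $\mathbb{A}^1$-fibration has negative logarithmic Kodaira dimension, so $\kappa(S)=-\infty$ is automatic (see \cite{MiyBook}). To see that $S_{\mathbb{C}}(\mathbb{C})$ is $\mathbb{Q}$-acyclic I would complete $\pi$ into a $\mathbb{P}^1$-fibration $\overline{\pi}\colon V\rightarrow\mathbb{P}^1$ over $\mathbb{R}$ whose boundary $B=V\setminus S$ consists of a cross-section, the fiber over $\infty$, and the components coming from the degenerate fibers; since each closed degenerate fiber is, up to multiplicity, a single copy of the affine line, the reduced total transform introduces no cycle, so $B$ is a geometrically rational tree whose class map $j_{\mathbb{C}}\otimes\mathrm{id}_{\mathbb{Q}}$ is an isomorphism, and $\mathbb{Q}$-acyclicity follows from Lemma \ref{lem:Q_hom_alg_top_charac}. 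Finally $S(\mathbb{R})\approx\mathbb{R}^2$ is obtained from Proposition \ref{prop:TopCharac_Euc_Plane}: the map $\pi(\mathbb{R})\colon S(\mathbb{R})\rightarrow\mathbb{A}^1_{\mathbb{R}}(\mathbb{R})=\mathbb{R}$ is, away from the real degenerate fibers, a trivial $\mathbb{R}$-bundle, and the hypothesis that the multiplicity over every real point is odd guarantees (as in the local model $x^nz=y^r-x$ with $r$ odd, where $(x,z)\mapsto(x,\sqrt[r]{x^nz+x},z)$ is a homeomorphism) that $\pi(\mathbb{R})$ remains a trivial $\mathbb{R}$-bundle across those fibers, the non-real degenerate fibers contributing no real point over $\mathbb{A}^1(\mathbb{R})$. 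Hence $S(\mathbb{R})$ is connected and contractible, whence $\mathbb{Z}_2$-acyclic, and is diffeomorphic to $\mathbb{R}^2$.

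For 1) $\Rightarrow$ 2), the heart of the matter is producing the fibration over $\mathbb{R}$. If $S_{\mathbb{C}}(\mathbb{C})$ is moreover $\mathbb{Z}$-acyclic then $S_{\mathbb{C}}\cong\mathbb{A}^2_{\mathbb{C}}$ by \cite{MS80}, whence $S\cong\mathbb{A}^2_{\mathbb{R}}$ by \cite{Kam75}, and any coordinate projection does the job. Otherwise $H_1(S_{\mathbb{C}};\mathbb{Z})$ is a nontrivial finite group, and the complex theory provides an $\mathbb{A}^1$-fibration $\pi_{\mathbb{C}}\colon S_{\mathbb{C}}\rightarrow\mathbb{A}^1_{\mathbb{C}}$ which is unique up to an automorphism of the base. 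Applying the real structure $\sigma$ to $\pi_{\mathbb{C}}$ yields a second $\mathbb{A}^1$-fibration, which by uniqueness agrees with $\pi_{\mathbb{C}}$ up to the base; this $\sigma$-invariance descends the fibration to an $\mathbb{A}^1$-fibration $\pi\colon S\rightarrow B_0$ defined over $\mathbb{R}$, where $B_0$ is a form of $\mathbb{A}^1$. The $\mathbb{Q}$-acyclicity of $S_{\mathbb{C}}$ forces $B_{0,\mathbb{C}}\cong\mathbb{A}^1_{\mathbb{C}}$ rather than $\mathbb{G}_{m,\mathbb{C}}$, so $B_0$ is obtained from $\mathbb{P}^1_{\mathbb{R}}$ by deleting a single Galois orbit; since $S(\mathbb{R})=\mathbb{R}^2$ surjects onto $B_0(\mathbb{R})$, that orbit is a real point and $B_0\cong\mathbb{A}^1_{\mathbb{R}}$.

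It then remains to read off the fiber structure. That each closed degenerate fiber is, with its reduced structure, an affine line over its residue field is again a consequence of $\mathbb{Q}$-acyclicity through Lemma \ref{lem:Q_hom_alg_top_charac}: a reducible or non-$\mathbb{A}^1$ degenerate fiber would create either a boundary component raising $\mathrm{rk}\,\mathbb{Z}\langle B_{\mathbb{C}}\rangle$ beyond $\mathrm{rk}\,\mathrm{Cl}(V_{\mathbb{C}})$ or a cycle in the dual graph, violating the isomorphism $j_{\mathbb{C}}\otimes\mathrm{id}_{\mathbb{Q}}$. The parity condition is the delicate point and the main obstacle: over a real point $t_0$ with fiber of multiplicity $m$, I would analyze $S(\mathbb{R})$ near $\pi^{-1}(t_0)$ through the standard local model of a multiplicity-$m$ fiber of an $\mathbb{A}^1$-fibration; the real fiber is nonempty and $\pi(\mathbb{R})$ is a local $\mathbb{R}$-bundle there precisely when $m$ is odd, whereas even $m$ forces either an empty real fiber or a local branching of $\pi(\mathbb{R})$ incompatible with $S(\mathbb{R})\approx\mathbb{R}^2$ (equivalently, via Proposition \ref{prop:Galois_real_locus_charac}, an even multiplicity contributes a class obstructing the $\mathbb{Z}_2$-cohomological condition). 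Establishing this parity dictionary intrinsically, rather than on the explicit model, is where most of the work lies.
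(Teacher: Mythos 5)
Your implication 1) $\Rightarrow$ 2) has a fatal gap at the descent step. The claim that whenever $H_{1}(S_{\mathbb{C}};\mathbb{Z})$ is a nontrivial finite group the $\mathbb{A}^{1}$-fibration on $S_{\mathbb{C}}$ is unique up to automorphisms of the base is false. By \cite{Du05}, a $\mathbb{Q}$-acyclic smooth affine surface of negative Kodaira dimension other than $\mathbb{A}^{1}\times(\mathbb{A}^{1}\setminus\{0\})$ carries at least two $\mathbb{A}^{1}$-fibrations with distinct general fibers precisely when it admits an SNC completion whose boundary is a chain of rational curves, and such surfaces exist with $H_{1}\simeq\mathbb{Z}/\mu\mathbb{Z}$ for every $\mu\geq2$: they are exactly the surfaces of Example \ref{Ex:Q-acyclic-boundary-chains}. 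Real models satisfying hypothesis 1) do occur among them: running that construction with $\mathbb{R}$-rational centers and odd multiplicity $\mu\geq3$ gives, by Proposition \ref{prop:NegKod-FiberStruct}, a surface with $S(\mathbb{R})\approx\mathbb{R}^{2}$, $S_{\mathbb{C}}(\mathbb{C})$ $\mathbb{Q}$-acyclic, $H_{1}(S_{\mathbb{C}}(\mathbb{C});\mathbb{Z})\simeq\mathbb{Z}/\mu\mathbb{Z}\neq0$, and at least two distinct $\mathbb{A}^{1}$-fibrations on $S_{\mathbb{C}}$, so your $\sigma$-invariance argument cannot get started. This non-unique case is where the paper spends essentially all of its effort (Proposition \ref{prop:A1-fib-existence}): it shows the minimal real boundary tree must then be a geometrically rational \emph{chain}, and either all its components are $\mathbb{R}$-rational, in which case Lemma \ref{lem:Chains-normal-forms} produces a real completion with boundary in normal form $F\vartriangleright C\vartriangleright T$, $F^{2}=0$, $C^{2}=-1$, whose linear system $|F|$ induces the fibration over $\mathbb{R}$; or some component is non-real, in which case Corollary \ref{cor:Cplx-Normal-Form} together with the palindrome invariants of \cite{Gi-Da1,BD11} forces $S$ to be the complement of a smooth conic in $\mathbb{P}_{\mathbb{R}}^{2}$ with a real point, for which the pencil of osculating conics at that point gives the fibration. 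None of this chain analysis can be replaced by a uniqueness-plus-descent argument.

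There is a second, self-acknowledged gap: the parity dictionary, on which both of your implications rest, is never established. Note moreover that its correct form is global, not local: what Proposition \ref{prop:NegKod-FiberStruct} 2) proves--via the Galois cohomology computation of Lemma \ref{lem:plane_arrangement_top_charac} c), after presenting a completion of $S$ as a blow-up of an arrangement of lines through a point and computing $H^{2}(\varphi)\colon H^{2}(G,R)\rightarrow H^{2}(G,\mathbb{Z}\langle\mathcal{E}_{0}\rangle)$--is that each degenerate fiber over an $\mathbb{R}$-rational point contains \emph{exactly one} $\mathbb{R}$-rational irreducible component, of odd multiplicity, all remaining components being non-real affine lines. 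Your proposed local statement (that $\pi(\mathbb{R})$ is a locally trivial $\mathbb{R}$-bundle near a real multiple fiber if and only if the multiplicity is odd) is precisely the content requiring proof, and the paper's cohomological route is designed to avoid it; extrapolating from the explicit model $x^{n}z=y^{r}-x$ is not an argument. So the proposal is incomplete in both directions, and irreparably so in 1) $\Rightarrow$ 2) without the chain analysis.
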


The proof requires several steps which occupy the next subsections. After recalling basic properties of $\mathbb{A}^1$-fibrations and their completions into $\mathbb{P}^1$-fibrations on smooth projective surfaces in $\S$ \ref{par:A1-fib-completion}, we establish in Proposition \ref{prop:NegKod-FiberStruct} of $\S$ \ref{par:Top-A1-fib} a characterization of $\mathbb{Q}$-acyclic euclidean plane among real surfaces $S$ admitting an $\mathbb{A}^{1}$-fibration $\pi:S\rightarrow\mathbb{A}_{\mathbb{R}}^{1}$ defined over $\mathbb{R}$. The proof of the existence of an $\mathbb{A}^1$-fibration defined over $\mathbb{R}$ on every $\mathbb{Q}$-acyclic euclidean plane (Proposition \ref{prop:A1-fib-existence} in $\S$ \ref{par:A1-Fib-FakePlane} below) then follows from a careful analysis of the structure of smooth real surfaces admitting a smooth projective completion $(V,B)$ defined over $\mathbb{R}$ for which $B$ is a rational chain which is done in $\S$ \ref{par:Rat-Chains-real}. 

\subsubsection{Basic properties of $\mathbb{A}^{1}$-fibrations and their completions} \label{par:A1-fib-completion} 
The geometry of smooth complex affine surfaces admitting an $\mathbb{A}^1$-fibration is well-understood through the study of their completions into $\mathbb{P}^1$-fibered surfaces (see e.g. \cite[Chapter 3, $\S$ 1]{MiyBook}). Let us recollect basic properties of these fibrations and their completions in a form which also covers the case of real affine surface.

\begin{lem}\label{lem:A1-fibComp} 
Let $\mathbf{k}=\mathbb{R}$ or $\mathbb{C}$, let $S$ be a smooth geometrically integral surface
defined over $\mathbf{k}$ and let $\pi:S\rightarrow C$ be an $\mathbb{A}^{1}$-fibration over
a smooth curve, either affine or projective, defined over $\mathbf{k}$.
Then there exists a smooth projective completion $(V,B)$ of $S$ defined over $\mathbf{k}$ with the following properties:

a) $V$ admits a $\mathbb{P}^1$-fibration $\overline{\pi}:V\rightarrow \overline{C}$ defined over $\mathbf{k}$ over a smooth projective completion $\overline{C}$ of $C$ and there is a commutative diagram \[\xymatrix{ S \ar[r] \ar[d]_{\pi} & V \ar[d]^{\overline{\pi}} \\ C \ar[r] & \overline{C}.}\].

b) If $S$ is affine, then the divisor $B=V\setminus S$ is a tree which can be written
in the form \[B=\bigcup_{c\in\overline{C}\setminus C}F_{c}\cup\overline{C}_{0}\cup\bigcup_{p\in C}H_{p},\]
where $\overline{C}_{0}$ is a section of $\overline{\pi}$, $F_{c}=\overline{\pi}^{-1}(c)\simeq\mathbb{P}_{\kappa(c)}^{1}$
and $H_{p}$ is either empty or a proper strictly SNC-minimal rational subtree of $\overline{\pi}^{-1}(p)$
containing a $\kappa(p)$-rational component intersecting
$\overline{C}_{0}$, the full fiber $\overline{\pi}^{-1}(p)$ being
equal to the union of $H_{p}$ and of the closure in $V$ of $\pi^{-1}(p)$.

Furthermore, the closure in $V$ of every irreducible component of $\pi^{-1}(p)$
is isomorphic to the projective line over a finite extension $\kappa'$
of $\kappa(p)$ and it intersects $H_{p}$ transversally in a unique
$\kappa'$-rational point. 
\end{lem}

\begin{figure}[!htb]
\input{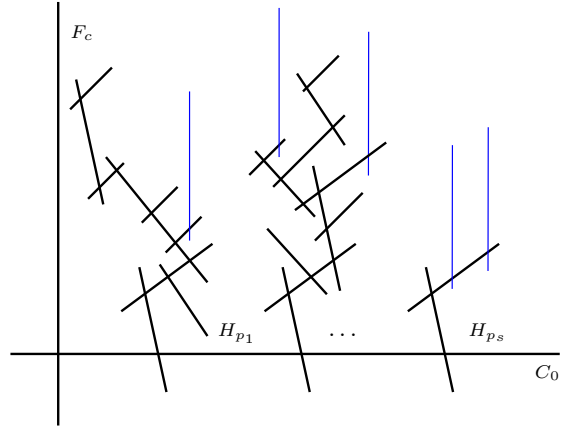} 
\caption{Structure of degenerate fibers in a minimal completion}  
\label{fig:negKod-A1-fibstruct}
\end{figure}

\begin{proof} Letting $V$ be any smooth projective completion of $S$ defined over $\mathbf{k}$, the fibration
$\pi:S\rightarrow C$ extends to a rational map $\overline{\pi}:V\dashrightarrow\overline{C}$
over the smooth projective model $\overline{C}$ or $C$ over $\mathbf{k}$,
and, after taking a log-resolution of the indeterminacies of $\overline{\pi}$
and of the boundary divisor $V\setminus S$, we may assume that $V$
is a smooth projective completion of $S$ with SNC boundary $B$ on
which $\pi$ extends to a morphism $\overline{\pi}:V\rightarrow\overline{C}$.
Up to performing a sequence of blow-downs defined over $\mathbf{k}$,
we may further assume that $B$ does not contain any irreducible component
$B_{i}\simeq\mathbb{P}_{\kappa}^{1}$ where $\kappa=\mathbb{R}$ or
$\mathbb{C}$, with self-intersection $-\mathrm{deg}(\kappa/\mathbf{k})$
contained in a fiber of $\overline{\pi}$ and intersecting at most
two other irreducible components of $B$. Since $V$ is smooth and
the generic fiber of $\pi$ is isomorphic to the affine line over
the function field of $C$, the generic fiber of $\overline{\pi}$
is isomorphic to the projective line over the function field of $\overline{C}$,
and the boundary divisor $B$ contains precisely one irreducible component,
say $\overline{C}_{0}$, which is a section of $\overline{\pi}$.
So there exists a birational morphism $\tau:V\rightarrow\mathbb{P}(E)$
defined over $\mathbf{k}$ to a ruled surface $\mathbb{P}(E)\rightarrow\overline{C}$
for a certain rank $2$ vector bundle $E$ over $\overline{C}$. Arguing for instance as in the proof of 
\cite[Lemma 1.0.7]{BD11}, we may, and do, further assume up to changing $\tau$ for a different birational morphism, 
that $\tau$ restricts to an isomorphism in a open neighborhood
of $\overline{C}_{0}$. As a consequence, every fiber of $\overline{\pi}$
over a closed point $c\in\overline{C}$ is a geometrically rational
tree defined over of the residue field $\kappa(c)$ of $c$, containing
at least one irreducible component isomorphic to $\mathbb{P}_{\kappa(c)}^{1}$. 
In the case where $S$ is affine, assertion b) then immediately follows from our minimality assumptions. 
\end{proof}

\subsubsection{Topology of $\mathbb{A}^1$-fibered affine surfaces}\label{par:Top-A1-fib} 
The following proposition provides in particular a characterization of $\mathbb{Q}$-acyclic euclidean plane among real surfaces $S$ admitting an $\mathbb{A}^{1}$-fibration $\pi:S\rightarrow\mathbb{A}_{\mathbb{R}}^{1}$ defined over $\mathbb{R}$. 

\begin{prop}
\label{prop:NegKod-FiberStruct} Let $S$ be a smooth geometrically
integral surface defined over $\mathbf{k}=\mathbb{R}$ or $\mathbb{C}$
and let $\pi:S\rightarrow C$ be an $\mathbb{A}^{1}$-fibration defined
over $\mathbf{k}$.

1) The surface $S_{\mathbb{C}}(\mathbb{C})$ is $\mathbb{Q}$-acyclic
if and only if $C\simeq\mathbb{A}_{\mathbf{k}}^{1}$ and every closed
fiber of $\pi$ is isomorphic to the affine line when equipped with
its reduced structure.

2) If $\mathbf{k}=\mathbb{R}$, $S$ is affine and $C\simeq\mathbb{A}_{\mathbb{R}}^{1}$,
then $S(\mathbb{R})\approx\mathbb{R}^{2}$ if and only if the scheme theoretic
fiber of $\pi$ over every $\mathbb{R}$-rational point is of the
form $mR+R'$, where $R\simeq\mathbb{A}_{\mathbb{R}}^{1}$, $m\geq1$
is odd, and $R'$ is an effective divisor whose support is disjoint
from $R$ and consists of a disjoint union of affine lines defined
over $\mathbb{C}$. \end{prop}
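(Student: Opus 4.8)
The plan is to read off both statements from the smooth projective completion $(V,B)$ of $S$ into a $\mathbb{P}^{1}$-fibration $\overline{\pi}\colon V\to\overline{C}$ constructed in $\S\ref{par:A1-fib-completion}$, translating the topology of $S_{\mathbb{C}}(\mathbb{C})$ and of $S(\mathbb{R})$ into the combinatorics of the fibers of $\overline{\pi}$. Throughout I work with the induced $\mathbb{P}^{1}$-fibration $\overline{\pi}_{\mathbb{C}}\colon V_{\mathbb{C}}\to\overline{C}_{\mathbb{C}}$ and record, for each closed point $t$, the number $n_{t}$ of irreducible components of $\overline{\pi}^{-1}(t)_{\mathbb{C}}$, split into a \emph{boundary part} (the components lying in $B_{\mathbb{C}}$) and an \emph{interior part} (the closures of the components of $\pi^{-1}(t)$).

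For assertion 1) I would first note that $\mathbb{Q}$-acyclicity of $S_{\mathbb{C}}(\mathbb{C})$ forces, by Lemma \ref{lem:Q_hom_alg_top_charac}, that $B_{\mathbb{C}}$ is a geometrically rational tree; since the section $\overline{C}_{0}$ dominates $\overline{C}$, this already gives $\overline{C}\simeq\mathbb{P}^{1}$. The same lemma reduces $\mathbb{Q}$-acyclicity to the statement that the classes of the irreducible components of $B_{\mathbb{C}}$ form a $\mathbb{Q}$-basis of $\mathrm{Cl}(V_{\mathbb{C}})\otimes_{\mathbb{Z}}\mathbb{Q}$. Writing $\rho(V_{\mathbb{C}})=2+\sum_{t}(n_{t}-1)$ for the $\mathbb{P}^{1}$-fibration and comparing it with the number of boundary components, I would exploit that linear independence of the boundary classes is incompatible with having two distinct fibers entirely contained in $B_{\mathbb{C}}$: the relation $[\overline{\pi}^{-1}(c)]\sim[\overline{\pi}^{-1}(c')]\sim F$ would then be a nontrivial relation among boundary classes. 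This rules out more than one point at infinity of the base, giving $C\simeq\mathbb{A}^{1}$; feeding $|\,\overline{C}_{\mathbb{C}}\setminus C_{\mathbb{C}}|=1$ back into the rank equality forces every degenerate fiber over $C$ to carry a single interior component, i.e. $\pi^{-1}(t)_{\mathrm{red}}\simeq\mathbb{A}^{1}$. The converse follows from the very same count, which shows the boundary classes are a basis once $C\simeq\mathbb{A}^{1}$ and all closed fibers are reduced lines (multiplicities being irrelevant to the counting of components); here one first records that such an $S$ is affine, so that the completion of $\S\ref{par:A1-fib-completion}$ and Lemma \ref{lem:Q_hom_alg_top_charac} apply.

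For assertion 2) I would study the induced map $\pi_{\mathbb{R}}\colon S(\mathbb{R})\to\mathbb{A}^{1}(\mathbb{R})=\mathbb{R}$ on real loci, where $S(\mathbb{R})$ is a smooth $2$-manifold. The real locus of a fiber $\pi^{-1}(t_{0})$, $t_{0}\in\mathbb{R}$, is the union of the real loci of its reduced components, so the non-real lines forming $R'$ contribute nothing, and the fiber condition is precisely the requirement that this real locus be a single line $R(\mathbb{R})\simeq\mathbb{R}$ carried with odd multiplicity. The decisive local computation is that, $S$ being smooth with $R\simeq\mathbb{A}^{1}_{\mathbb{R}}$ the unique real component, in suitable real-analytic coordinates $(w,s)$ near a real point of $R$ one has $\pi-t_{0}=u\cdot w^{m}$ with $u$ a local unit and $m$ the multiplicity of $R$; the map $w\mapsto u w^{m}$ is a local homeomorphism of $\mathbb{R}$ exactly when $m$ is odd. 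Granting the fiber condition, this makes $\pi_{\mathbb{R}}$ a locally trivial $\mathbb{R}$-bundle over $\mathbb{R}$ with connected total space, hence trivial, so $S(\mathbb{R})\approx\mathbb{R}^{2}$; one may alternatively conclude through Proposition \ref{prop:TopCharac_Euc_Plane} after verifying $\mathbb{Z}_{2}$-acyclicity. For the converse I argue by contraposition: if some real fiber has empty real locus then $\pi_{\mathbb{R}}$ omits $t_{0}$ and $S(\mathbb{R})$ is disconnected; if it has at least two real components the fiber real locus is disconnected, breaking $\mathbb{Z}_{2}$-acyclicity; and the remaining case of a single real component with even multiplicity is detected on the completion, where the factor $m$ enters the cycle map $j_{\mathbb{C}}$ modulo $2$ and obstructs the isomorphism $H^{2}(j_{\mathbb{C}})$ demanded by Proposition \ref{prop:Galois_real_locus_charac}.

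I expect the main obstacle to be this last, even-multiplicity, case of the reverse implication in 2): the failure is \emph{global} rather than local, since $S(\mathbb{R})$ remains a smooth surface with every real fiber homeomorphic to $\mathbb{R}$, yet it is not $\mathbb{R}^{2}$ (it arises as a nontrivial double of a planar region). Handling it cleanly requires converting the parity of the multiplicity of the real fiber component into the behaviour of the boundary $B(\mathbb{R})$ and of the mod-$2$ cycle map on the completion, and then invoking Proposition \ref{prop:Galois_real_locus_charac} together with the description of $B(\mathbb{R})$ supplied by Lemma \ref{lem:real_loc_rat_tree}.
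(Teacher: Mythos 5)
Your treatment of assertion 1) is essentially sound and amounts to the same argument as the paper's, in a slightly different packaging: the paper also first uses the boundary-class criterion (non-surjectivity of $j_{\mathbb{C}}$ if $B$ contains no full fiber, non-injectivity if it contains two) to get $C\simeq\mathbb{A}^{1}_{\mathbf{k}}$, and then, instead of your direct Picard-rank count on the ruled completion, contracts to $\mathbb{P}^{2}$ and invokes Lemma \ref{lem:plane_arrangement_top_charac}. Two small omissions in your write-up: you only rule out \emph{two or more} fibers at infinity, but the case $C=\overline{C}$ projective (no full fiber in $B$, so $j_{\mathbb{C}}\otimes\mathbb{Q}$ fails to be surjective) must also be excluded; and in the converse, equality of ranks is not by itself a proof that the boundary classes form a $\mathbb{Q}$-basis --- one should invoke the standard fact that a section, one full fiber, and all-but-one components of each degenerate fiber give a basis of $\mathrm{Cl}(V_{\mathbb{C}})\otimes\mathbb{Q}$ (this is where the paper's diagonal matrix $\mathrm{diag}(\mu_{1},\ldots,\nu_{r})$ does the work).

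Assertion 2) is where your route genuinely diverges from the paper's, and it has a real gap. The paper never works with the open map $\pi_{\mathbb{R}}$ at all: it proves \emph{both} directions at once on the proper completion, by identifying bases of $H^{2}(G,R)$ and $H^{2}(G,\mathbb{Z}\langle\mathcal{E}_{0}\rangle)$ and computing $H^{2}(\varphi)([F_{i,\mathbb{C}}-F_{\infty,\mathbb{C}}])=\sum_{\ell}\mu_{i,\ell}[A_{i,\ell}(p_{i})_{\mathbb{C}}]$, so that the criterion of Lemma \ref{lem:plane_arrangement_top_charac} c) reads off exactly ``one real component, odd multiplicity''. Your forward direction instead rests on the step ``the local normal form $\pi-t_{0}=u\,w^{m}$, $m$ odd, makes $\pi_{\mathbb{R}}$ a locally trivial $\mathbb{R}$-bundle''. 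That inference is unjustified: $\pi_{\mathbb{R}}$ is not proper and its fibers are non-compact, so an Ehresmann-type conclusion is unavailable, and a topological submersion of a surface onto $\mathbb{R}$ all of whose fibers are homeomorphic to $\mathbb{R}$ does not come with local trivializations over the base for free --- pointwise product structure in the source says nothing about how the non-compact ends of nearby fibers match up, which is precisely what a trivialization over an interval requires. Any honest repair is a \emph{global} argument (e.g.\ passing to the proper map $\overline{\pi}(\mathbb{R})\colon V(\mathbb{R})\rightarrow\mathbb{P}^{1}(\mathbb{R})$ and controlling $B(\mathbb{R})$, or a separation/van Kampen analysis of the foliation by fibers), at which point one is redoing the paper's completion-based proof. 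Your converse is also incomplete in two of its three cases: the claim that two disjoint real fiber components ``break $\mathbb{Z}_{2}$-acyclicity'' is asserted rather than proved (a map $\mathbb{R}^{2}\rightarrow\mathbb{R}$ can perfectly well have disconnected fibers, so the contradiction must exploit that these components are disjoint properly embedded lines limiting onto connected nearby fibers); and the even-multiplicity case --- which you correctly single out as the crux --- is deferred to Proposition \ref{prop:Galois_real_locus_charac} ``on the completion'' without the computation being carried out, yet that computation of the mod-$2$ cycle map is exactly the content of the paper's proof, and once one sets it up it handles all cases of both implications simultaneously, making the local analysis superfluous.
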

\begin{proof}
The first assertion follows from \cite[Theorem 4.3.1 p. 231]{MiyBook} and its proof, together with the fact that there is no nontrivial real form of $\mathbb{A}^1_{\mathbb{C}}$. 
 
For the second assertion, we consider a smooth projective completion $(V,B)$ of $S$ as in Lemma \ref{lem:A1-fibComp} above. Since $S$ is affine and rational, $V$ is obtained from a Hirzebruch surface $\rho_{n}:\mathbb{F}_{n}\rightarrow\mathbb{P}_{\mathbb{R}}^{1}$
by a sequence of blow-ups $\tau:V\rightarrow\mathbb{F}_{n}$ defined over $\mathbb{R}$
mapping the irreducible component $\overline{C}_0$ of
$B$ isomorphically onto a section of $\rho_{n}$. Up to changing $V$ for another smooth projective completion obtained by making elementary transformation with center
on the fiber $F_{\infty}\simeq \mathbb{P}^1_{\mathbb{R}}$ of $\overline{\pi}$ over $\infty=\mathbb{P}_{\mathbb{R}}^{1}\setminus\mathbb{A}_{\mathbb{R}}^{1}$, we can assume that $n=1$ and that $\tau_*(\overline{C}_0)$ is the exceptional section of $\rho_{1}$ with self-intersection $-1$.  Let $p_{1},\ldots,p_{s}$ 
and  $q_{1},\ldots,q_{r}$ be respectively the $\mathbb{R}$-rational and $\mathbb{C}$-rational points
of $\mathbb{A}_{\mathbb{R}}^{1}$ over which the fiber
of $\pi:S\rightarrow\mathbb{A}_{\mathbb{R}}^{1}$ is degenerate. Let
$F_{i}\simeq \mathbb{P}^1_{\mathbb{R}}$ and $G_{j}\simeq \mathbb{P}^1_{\mathbb{C}}$ be the fibers of $\rho_{1}$ over these points.
The images of $F_{\infty}$, $F_{i}$ and $G_{j}$ in $\mathbb{P}_{\mathbb{R}}^{2}$
by the contraction of $\tau_{*}(\overline{C}_{0})$ form an arrangement
$D$ of lines and geometrically reducible conics meeting each other
in a unique $\mathbb{R}$-rational point. This yields a presentation
of $(V_{\mathbb{C}},B_{\mathbb{C}})$ as the blow-up of an arrangement
$D_{\mathbb{C}}$ of $s+2r+1$ lines meeting each other in a unique
point. The image of $F_{\infty}\simeq\mathbb{P}_{\mathbb{R}}^{1}$
in $\mathbb{P}_{\mathbb{\mathbb{R}}}^{2}$ is a generator of $\mathrm{Cl}(\mathbb{P}_{\mathbb{R}}^{2})$,
and a basis of the kernel $R$ of the homomorphism $j_{\mathbb{C}}:\mathbb{Z}\langle D_{\mathbb{C}}\rangle\rightarrow\mathrm{Cl}(\mathbb{P}_{\mathbb{C}}^{2})$ consists for instance of the classe of the images in $\mathbb{P}_{\mathbb{C}}^{2}$
of the divisors $F_{i,\mathbb{C}}-F_{\infty,\mathbb{C}}$, $T_{j}-F_{\infty,\mathbb{C}}$
and $\overline{T}_{j}-F_{\infty,\mathbb{C}}$, where $T_{j}$ and
$\overline{T}_{j}$ denotes the two connected components of $G_{j,\mathbb{C}}$,
exchanged by the real structure $\sigma$. Let $\mathcal{E}_{0}$ be the set consisting of the complexifications of the closures $A_{i,\ell}(p_{i})$, $\ell=1,\ldots,n_{i}$ and $A_{j,\ell}(q_{j})$,
$\ell=1,\ldots,m_{j}$ in $V$ of the irreducible components of the fibers of $\pi$ over the points
$p_{i}$ and $q_{j}$. We let $\mu_{i,\ell}\geq1$ and $\nu_{j,\ell}\geq1$
be the multiplicities $A_{i,\ell}(p_{i})$ and $A_{j,\ell}(q_{j})$
in the fibers $\overline{\pi}^{*}(p_{i})$ and $\overline{\pi}^{*}(q_{j})$
respectively. 

Since $B(\mathbb{R})$ is not empty, it follows
from c) in Lemma \ref{lem:plane_arrangement_top_charac} that $S(\mathbb{R})\approx\mathbb{R}^{2}$
if and only if the map $H^{2}(\varphi):H^{2}(\mathbb{Z}_{2},R)\rightarrow H^{2}(\mathbb{Z}_{2},\mathbb{Z}\langle\mathcal{E}_{0}\rangle)$
is an isomorphism. The Galois cohomology groups $H^{2}(\mathbb{Z}_{2},R)$ and
$H^{2}(\mathbb{Z}_{2},\mathbb{Z}\langle\mathcal{E}_{0}\rangle)$ have
bases consisting respectively of the classes of the $\sigma$-invariant
curves $F_{i,\mathbb{C}}-F_{\infty,\mathbb{C}}$ and of the classes
of the complexifications of the $\mathbb{R}$-rational curves among
the $A_{i,\ell}(p_{i})$. By construction, $H^{2}(\varphi)([F_{i,\mathbb{C}}-F_{\infty,\mathbb{C}}])=\sum_{\ell=1}^{n_{i}}\mu_{i,\ell}[A_{i,\ell}(p_{i})_{\mathbb{C}}]$
where the sum is taken over these $\mathbb{R}$-rational irreducible
components. So $H^{2}(\varphi)$ is an isomorphism if and only if
for every $i=1,\ldots,s,$ there exists exactly one $\mathbb{R}$-rational
curve among the $A_{i,\ell}(p_{i})$, say $A_{i,1}(p_{i})$, and the
residue class of $\mu_{i,1}$ modulo $2$ is nonzero. This proves
2). 
\end{proof}

\subsubsection{Affine surfaces completable by a rational chain} \label{par:Rat-Chains-real}
Here we establish auxiliary results concerning the existence of "normal forms" for boundary chains of smooth affine surfaces $S$ defined over $\mathbf{k}=\mathbb{R}$ or $\mathbb{C}$ admitting a smooth completion $(V,B)$ whose boundary $B$ is a rational chain. 

\begin{lem}
\label{lem:Chains-normal-forms} Let $\mathbf{k}=\mathbb{R}$ or $\mathbb{C}$,
let $(V,B)$ be a pair defined over $\mathbf{k}$ consisting of a
smooth projective surface $V$ and a geometrically rational chain
$B$ supporting a effective ample divisor on $V$ and let $S=V\setminus B$.
Then the following hold:

1) If every irreducible component of $B$ is $\mathbf{k}$-rational,
then there exists a smooth projective completion $(V_{1},B_{1})$ of $S$
defined over $\mathbf{k}$ whose boundary $B_{1}$ is a chain of $\mathbf{k}$-rational
curves of the form $B_{1}=F\vartriangleright C\vartriangleright E$
where $F^{2}=0$, $C^{2}=-1$ and $E$ is either empty, or an irreducible
curve with self-intersection $0$, or a chain of rational curves with
self-intersections $\leq-2$. 

2) If $B$ has a non $\mathbf{k}$-rational irreducible component
then there exists a smooth projective completion $(V_{1},B_{1})$ of $S$
defined over $\mathbf{k}$ whose boundary $B_{1}$ is a geometrically
rational chain such that $B_{1,\mathbb{C}}$ has one of the following
forms:

$\quad$a) \textup{$B_{1,\mathbb{C}}=H\vartriangleright\overline{H}$
where $H$ is irreducible with self-intersection $1$ and $\overline{H}$
is its image by the real structure $\sigma$ on $V_{1,\mathbb{C}}$. }

$\quad$ b) $B_{1,\mathbb{C}}=H\vartriangleright\overline{H}$ where
$H=E\vartriangleright G$ is a chain consisting of an irreducible
curve $G$ with self-intersection $0$ and a possible empty chain
$E$ of curves with self-intersections $\leq-2$ and $\overline{H}$
is the image of $H$ by the real structure $\sigma$ on $V_{1,\mathbb{C}}$. 

$\quad$ b') $B_{1,\mathbb{C}}=H\vartriangleright\overline{H}$ where
$H=E\vartriangleright G$ is a chain consisting of an irreducible
curve $G$ with self-intersection $-1$ and a nonempty chain of curves
with self-intersections $\leq-2$ and $\overline{H}$ is the image
of $H$ by the real structure $\sigma$ on $V_{1,\mathbb{C}}$. 

$\quad$ c) \textup{$B_{1,\mathbb{C}}=H\vartriangleright C\vartriangleright\overline{H}$
}where $C$ is an irreducible curve of self-intersection $0$ invariant
by the real structure $\sigma$ on $V_{1,\mathbb{C}}$, $H$ is either
an irreducible curve with self-intersection $0$ or a chain of curves
with self-intersections $\leq-2$, except maybe its right boundary
which is a $(-1)$-curve, and $\overline{H}$ is the image of $H$
by the real structure $\sigma$ on $V_{1,\mathbb{C}}$. \end{lem}
\begin{proof}
We may assume from the very beginning that $B$ is minimal over $\mathbf{k}$,
i.e. that it does not contain any irreducible component $B_{i}\simeq\mathbb{P}_{\kappa}^{1}$,
where $\kappa=\mathbb{R}$ or $\mathbb{C}$ with self-intersection
$-\deg(\kappa/\mathbf{k})$. Being the support of an effective ample
divisor on $V_{\mathbb{C}}$, $B_{\mathbb{C}}$ must then contain
an irreducible component with self-intersection $\geq-1$. 

The first assertion is well-known, so we only sketch the argument,
referring the reader for instance to \cite{Gi-Da1} for the detail.
If every irreducible component of $B$ is $\mathbf{k}$-rational,
then the previous observation together implies that $B$ contains
at least one irreducible component of nonnegative self-intersection.
Fixing an orientation on $B$, we let $D$ be the leftmost irreducible
component of $B$ with this property and we let $D_{\leq}\subset B$
be the subchain of $B$ consisting of $D$ and the components on the
left of it. Then by a sequence of birational transformations, consisting
of blow-ups of double points of the support of the successive total
transforms of $D{}_{\leq}$ and contractions of irreducible components
of them, we can transform $(V,B)$ into a smooth projective completion
$(V',B')$ of $S$ defined over $\mathbf{k}$, whose boundary $B'$
consists of $\mathbf{k}$-rational curves, in such a way that the
left boundary $F$ of $B'$ has self-intersection $0$. (e.g. see
\cite[Lemma 2.7]{Du05} for a description of such kind of birational
transformations). The surface $V'\setminus B'\simeq V\setminus B$
being affine, $B'$ has at least a second irreducible component and
up to making additional elementary transformations with centers at
$\mathbf{k}$-rational points of $F$, we may assume that the irreducible
component of $B'$ intersecting $F$, say $C$, has self-intersection
$-1$. It follows that the complete linear system $|F|$ generates
a $\mathbb{P}^{1}$-fibration $\overline{\pi}:V'\rightarrow\mathbb{P}_{\mathbf{k}}^{1}$
having $F$ as a full fiber, $C$ as a section, and the remaining
irreducible components of $B'$ form a possibly empty chain $T$ contained
in a single other fiber of $\overline{\pi}$. If $T$ is not empty
then after contracting all successive $(-1)$-curves contained in
its support and performing additional elementary transformations with
centers at $\mathbf{k}$-rational points of $F$ if necessary, we
reach a smooth projective completion $(V_{1},B_{1})$ defined over
$\mathbf{k}$, whose boundary is a chain $B_{1}=F\vartriangleright C\vartriangleright T$
of $\mathbf{k}$-rational curves with the desired properties. Note
for further use that the initial boundary $B$ contained two disjoint
irreducible components with non negative self-intersections if and
only it consisted of a chain of three irreducible components $D\vartriangleright C\vartriangleright E$
with $D^{2}=(E)^{2}=0$ and $C^{2}$ arbitrary. Indeed, since the
birational transformations involved in the construction the pair $(V',B')$
restricts to isomorphisms outside $D_{\leq}$ and its successive total
transforms, the proper transform $E'$ in $V'$ of an irreducible
component $E\subset B$ disjoint from $D$ is disjoint from $F$ and
has self-intersection $(E')^{2}=E^{2}$. The only possibility is thus
that $E'$ is a fiber of $\overline{\pi}:V'\rightarrow\mathbb{P}_{\mathbf{k}}^{1}$.
Thus $E$ has self-intersection $0$ and is necessarily the right
boundary of $B$, and the same argument implies that $B=D\vartriangleright C\vartriangleright E$
as desired. 

Now suppose that at least one of the irreducible component of $B$
is not $\mathbf{k}$-rational. So $\mathbf{k}=\mathbb{R}$ and we have the following
alternative: either $B(\mathbb{R})$ consists of a unique point $p$
and then $B_{\mathbb{C}}$ is a chain of the form $H\vartriangleright\overline{H}$
where $H$ and $\overline{H}$ are chains intersecting each others
in $p$ and exchanged by the real structure $\sigma$ on $V_{\mathbb{C}}$,
or $B$ contains a unique geometrically irreducible component with
empty real locus, or a unique $\mathbb{R}$-rational irreducible component,
say $C_{0}$, and $B_{\mathbb{C}}$ is a chain of the form $H\vartriangleright C_{0,\mathbb{C}}\vartriangleright\overline{H}$
where $C_{0,\mathbb{C}}\simeq\mathbb{P}_{\mathbb{C}}^{1}$ and $H$
and $\overline{H}$ are possibly empty chains exchanged by the real
structure $\sigma$ on $V_{\mathbb{C}}$. We consider two sub-cases:

1) If $B_{\mathbb{C}}$ is SNC-minimal, then by the observation at
the beginning of the proof, there exists an irreducible component
$D_{0}$ of $B_{\mathbb{C}}$ with non negative self-intersection.
If $B(\mathbb{R})=\{p\}$ then $B_{\mathbb{C}}=H\cup\overline{H}$
and since $B$ defined over $\mathbb{R}$, it follows that $B_{\mathbb{C}}$
contains at least two irreducible components with non negative self-intersection,
$D_{0}$ and its image $\overline{D}_{0}$ by the real structure $\sigma$
on $V_{\mathbb{C}}$. If $D_{0}$ and $\overline{D}_{0}$ are disjoint
then $B_{\mathbb{C}}=D_{0}\cup C\cup\overline{D}_{0}$ where $C\simeq\mathbb{P}_{\mathbb{C}}^{1}$.
But then it would follow that $B(\mathbb{R})$ is either empty or
homeomorphic to $S^{1}$, a contradiction. So up to the choice of
an ordering of $B_{\mathbb{C}}$ and the exchange of $D_{0}$ and
$\overline{D}_{0}$, we may assume that $B_{\mathbb{C}}=G\vartriangleright D_{0}\vartriangleright\overline{D}_{0}\vartriangleright\overline{G}$
where $D_{0}$ and $\overline{D}_{0}$ intersect in $\{p\}$ and $G$
and $\overline{G}$ are possibly empty chains of rational curves with
self-intersection $\leq-2$ exchanged by the real structure $\sigma$.
Furthermore $D_{0}^{2}\leq1$ for otherwise, by blowing-up $\{p\}$
with exceptional $E$, we would obtain a new smooth projective completion
$(V',B')$ of $S$ defined over $\mathbb{R}$ whose boundary chain
$B'$ would have the property that $B'_{\mathbb{C}}=G\vartriangleright D_{0}\vartriangleright E\vartriangleright\overline{D}_{0}\vartriangleright\overline{G}$
contains two disjoint irreducible components with positive self-intersection.
For the same reason we conclude that either $D_{0}^{2}=1$ and then
$B=D_{0}\vartriangleright\overline{D}_{0}$ or $D_{0}^{2}=\overline{D}_{0}^{2}=0$
and then $B_{\mathbb{C}}=G\vartriangleright D_{0}\vartriangleright\overline{D}_{0}\vartriangleright\overline{G}$
where $G$ and $\overline{G}$ are possibly empty chains of rational
curves with self-intersection $\leq-2$ exchanged by the real structure
$\sigma$. This corresponds to cases a) and b) respectively.

Otherwise, if $B(\mathbb{R})=\emptyset$ or $S^{1}$ then $B_{\mathbb{C}}=H\vartriangleright C_{0,\mathbb{C}}\vartriangleright\overline{H}$.
If $H$ contains an irreducible component with non negative self-intersection
then by the same argument as above, we conclude that $B=D_{0}\vartriangleright C_{0,\mathbb{C}}\vartriangleright\overline{D}_{0}$
with $D_{0}^{2}=\overline{D}_{0}^{2}=0$ and by elementary
transformations defined over $\mathbb{R}$ with centers on $D_{0}\cup\overline{D}_{0}$,
we may assume that $C_{0,\mathbb{C}}^{2}=0$ or $-1$, the second
case being then reduced further to the one $B=D_{0}\cup\overline{D}_{0}$
with $D_{0}^{2}=\overline{D}_{0}^{2}=1$ by contracting $C_{0,\mathbb{C}}$.
The only other possibility is that $C_{0,\mathbb{C}}^{2}\geq0$ and
that $H$ and $\overline{H}$ consists of chains of curves with self-intersections
$\leq-2$ exchanged by the real structure $\sigma$. By blowing-up
pairs of double points on $C_{0,\mathbb{C}}$ exchanged by the real
structure $\sigma$, we may reduce to either case c), that is, $B_{\mathbb{C}}=H\vartriangleright C_{0,\mathbb{C}}\vartriangleright\overline{H}$
where $C_{0,\mathbb{C}}^{2}=0$, $H$ consists of a chain of curves
with self-intersection $\leq-2$ except maybe its right boundary which
is a $(-1)$-curve, and $\overline{H}$ is the image of $H$ by the
real structure $\sigma$, or to the situation that $C_{0,\mathbb{C}}^{2}=-1$
from which we reach case b) by contracting $C_{0}$. 

2) If $B_{\mathbb{C}}$ is not minimal over $\mathbb{C}$, then the
hypothesis that $B$ is minimal over $\mathbb{R}$ implies that $B_{\mathbb{C}}=E\vartriangleright D_{0}\vartriangleright\overline{D}_{0}\vartriangleright\overline{E}$
where $D_{0}$ and $\overline{D}_{0}$ are irreducible with self-intersection
$-1$ and $E$ is a chain of curves with self-intersection different
from $-1$. Since $B_{\mathbb{C}}$ is the support of an ample divisor
on $V_{\mathbb{C}}$, $E$ cannot be empty. Furthermore, it cannot
contain any irreducible component with nonnegative self-intersection
for otherwise $B_{\mathbb{C}}$ would contain two disjoint such components.
This yields case b'). \end{proof}

\begin{cor}
\label{cor:Cplx-Normal-Form} Let $(V,B)$ be a pair defined over
$\mathbb{R}$ consisting of a smooth projective surface $V$ and a
geometrically rational chain $B$ supporting a effective ample divisor
on $V$. If the irreducible component of $B$ are not all $\mathbb{R}$-rational
then there exists a smooth projective completion $(V_{2},B_{2})$ of $S_{\mathbb{C}}=V_{\mathbb{C}}\setminus B_{\mathbb{C}}$
defined over $\mathbb{C}$ whose boundary $B_{2}$ is a chain of $\mathbb{C}$-rational
curves of the form $B_{2}=F\vartriangleright C\vartriangleright E$
where $F^{2}=0$, $C^{2}=-1$ and where $E$ is either empty, or an
irreducible curve with self-intersection $0$, or a chain of the one
of the following types 

i) $[-e_{1},-e_{2},\cdots,-e_{n},-e_{n},\ldots,-e_{1}]$, where $n\geq1$
and $e_{i}\geq2$ for every $i=1,\ldots,n$.

ii) \textup{$[-e_{1},\ldots,-e_{n-1},-2e_{n},-e_{n-1},\ldots,-e_{1}]$
}where $n\geq2$ and $e_{i}\geq2$ for every $i=1,\ldots,n$.

iii) $[-e_{1},\ldots,-e_{n-1},-2e_{n}+1,-e_{n-1},\ldots,-e_{1}]$
where $n\geq2$ and $e_{i}\geq2$ for every $i=1,\ldots,n$.\end{cor}
\begin{proof}
Let $(V_{1},B_{1})$ be the smooth projective completion $(V_{1},B_{1})$
of $S$ defined over $\mathbb{R}$ with geometrically  rational chain
boundary $B_{1}$ constructed in the previous lemma. In case a) we
reach a chain of three irreducible components with self-intersection
$(0,-1,0)$ by blowing-up the intersection point of $H\vartriangleright\overline{H}$.
In case b) and $E$ is empty, then $B_{1,\mathbb{C}}$ consists of
a pair of irreducible curves $G$ and $\overline{G}$ with self-intersection
$0$ which can be transformed into a pair of curves with self-intersection
$0$ and $-1$ by performing an elementary transformation at their
intersection point. Otherwise, if $E=E_{1}\vartriangleright\cdots\vartriangleright E_{n}$
is not empty, we let $e_{i}=-E_{i}^{2}=-(\overline{E}_{i})^{2}\geq2$
for every $i=1,\ldots,n$. We desired smooth projective completion of $S_{\mathbb{C}}$
is obtained from $(V_{1,\mathbb{C}},B_{1,\mathbb{C}})$ by performing
the following sequence of birational transformations with centers
and exceptional curves all supported on the successive total transforms
of $B_{1,\mathbb{C}}$. We first blow-up the point $G\cap\overline{G}$
and contract the proper transform of $G$. The self-intersection of
$E_{n}$ increased by $1$, the self-intersection of $\overline{G}$
decreased by one and the proper transform of the exceptional divisor
of the blow-up has self-intersection $0$. We repeat the same operation
again $e_{n}-2$ times until the proper transform of $E_{n}$ has
self-intersection $-1$. Then we blow-up again the intersection point
of the proper transform of the last exceptional divisor $E$ with
$\overline{G}$ to get a chain of the form $E_{1}\vartriangleright E_{2}\vartriangleright\cdots\vartriangleright E_{n}\vartriangleright E\vartriangleright E'\vartriangleright\overline{G}\vartriangleright\overline{E}_{n}\vartriangleright\cdots\vartriangleright\overline{E}_{1}$
with $E_{n}^{2}=E^{2}=-1$ and $\overline{G}^{2}=-e_{n}$. Then we
contract $E_{n}$ to get a chain of the form $E_{1}\vartriangleright E_{2}\vartriangleright\cdots\vartriangleright E_{n-1}\vartriangleright E\vartriangleright E'\vartriangleright\overline{G}\vartriangleright\overline{E}_{n}\cup\cdots\vartriangleright\overline{E}_{1}$
where $E_{n-1}^{2}=-e_{n-1}+1$, $E^{2}=0$ and $(E')^{2}=-1$. We
continue by induction until we reach a chain of the form $E_{1}\vartriangleright E\vartriangleright E'\vartriangleright\tilde{T}$
where $E_{1}^{2}=E^{2}=-1$, $(E')^{2}=-e_{1}$ and $\tilde{T}$ is
a chain of irreducible curves of type $[-e_{2},\cdots,-e_{n},-e_{n},\ldots,-e_{1}]$.
By contracting $E_{1}$, we eventually reach the desired smooth completion
with boundary chain of type i). 

The remaining two cases, corresponding respectively to smooth projective
completions $(V_{1},B_{1})$ of the form c) and b') in Lemma \ref{lem:Chains-normal-forms},
follow from similar arguments. We leave the detail to the reader. \end{proof}

\begin{example}
\label{Ex:Q-acyclic-boundary-chains} ($\mathbb{Q}$-acyclic surfaces
completable by a chain of rational curves). 
Let $S$ be smooth complex $\mathbb{Q}$-acyclic surface non isomorphic to $\mathbb{A}^2_{\mathbb{C}}$, with an $\mathbb{A}^{1}$-fibration $\pi:S\rightarrow\mathbb{A}_{\mathbb{C}}^{1}$ and admitting a smooth projective completion $(V,B)$ as in Lemma \ref{lem:A1-fibComp} for which $B=F_{\infty} \vartriangleright \overline{C}_0 \vartriangleright E$ is a chain, where $\infty=\mathbb{P}^{1}_{\mathbb{C}}\setminus \mathbb{A}^{1}_{\mathbb{C}}$. Since $S\not\simeq \mathbb{A}^2_{\mathbb{C}}$, $E$ is not empty. Letting $q=\overline{\pi}(E)$, it follows from Lemma \ref{lem:A1-fibComp} and Proposition \ref{prop:NegKod-FiberStruct} that $\pi^{-1}(q)$ is the unique degenerate fiber of $\pi$. Its closure in $V$ is irreducible, of multiplicity $\mu\geq2$ as a component of $\overline{\pi}^{*}(q)$
and is the unique $(-1)$-curve contained in $\overline{\pi}^{-1}(q)$.
It follows that $V$ is obtained from the Hirzebruch surface $\rho_{n}:\mathbb{F}_{n}\rightarrow\mathbb{P}_{\mathbb{C}}^{1}$, where $n=\overline{C}^2_0$, by a sequence of blow-ups $\tau:V\rightarrow \mathbb{F}_{n}$ of the following type: the
first step is the blow-up of a point of $F_{q}=\rho_{n}^{-1}(q)$ distinct from its
intersection with the exceptional section of $\rho_{n}$, say with
exceptional divisor $E_{0}$. The second step consists of a subdivisional
expansion with center at the point $p=(F_{q}\cap E_{0})$ with last
exceptional divisor $A_{0}(p)$ and multiplicities $(\mu,\nu)$ for
a certain integer $\nu\geq1$. The final step consists of the blow-up
of a simple point $r$ of the total transform of $F_{q}$ supported on
$A_{0}(p)$, with exceptional divisor $D$. We then have $E=\tau^{-1}(F_{q})\setminus D$. 
\begin{figure}[!htb]
\input{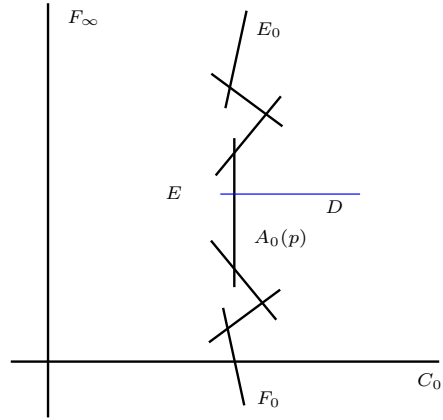} 
\caption{Boundary chain of a $\mathbb{Q}$-acyclic surface with negative Kodaira dimension}  
\label{fig:negKod-chains-boundary}
\end{figure}

\noindent Note in particular that the left and right boundary curves
of the chain $E$ have distinct self-intersections, except if $E$
consists of three irreducible components with self-intersection $-2$.
In this case, the contraction of the chain $\overline{C}_{0}\vartriangleright E$
defines a birational morphism $\beta:V\rightarrow\mathbb{P}_{\mathbb{C}}^{2}$
which maps $S$ isomorphically onto the complement of the image $\beta_{*}(F_{\infty})$
of $F_{\infty}$, which is a smooth conic. 

In the case where $q$ is a real point of $\mathbb{P}^1_{\mathbb{C}}$ equipped with the standard real structure and $r$ is a real point of $A_{0}(p)$, the pair $(V,B)$ is defined over $\mathbb{R}$ and so is the $\mathbb{A}^1$-fibration $\pi:S\rightarrow \mathbb{A}^1_{\mathbb{C}}$. By virtue of Proposition \ref{prop:NegKod-FiberStruct}, $S$ is a then a $\mathbb{Q}$-acyclic euclidean plane if and only if $\mu$ is odd. 
\end{example}

\subsubsection{Existence of real $\mathbb{A}^1$-fibrations}\label{par:A1-Fib-FakePlane}
\noindent The next proposition completes the proof of Theorem \ref{thm:Q-acyclic-Neg-Desc}. 
\begin{prop}
\label{prop:A1-fib-existence} Let $S$ be a smooth geometrically
integral surface defined over $\mathbb{R}$ of negative Kodaira dimension
such that $S_{\mathbb{C}}(\mathbb{C})$ is $\mathbb{Q}$-acyclic and
$S(\mathbb{R})$ is non compact. Then $S$ admits an $\mathbb{A}^{1}$-fibration
$\pi:S\rightarrow\mathbb{A}_{\mathbb{R}}^{1}$ defined over $\mathbb{R}$. \end{prop}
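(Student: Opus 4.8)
The plan is to descend to $\mathbb{R}$ the $\mathbb{A}^1$-fibration that $S_{\mathbb{C}}$ already carries by the complex theory, exploiting the fact that this fibration is canonical in all but one exceptional case. If $S_{\mathbb{C}}\simeq\mathbb{A}^2_{\mathbb{C}}$ there is nothing to prove: by Kambayashi's theorem \cite{Kam75} the affine plane has no nontrivial forms over a field of characteristic zero, so $S\simeq\mathbb{A}^2_{\mathbb{R}}$ and the first projection is the desired fibration. So I assume $S_{\mathbb{C}}\not\simeq\mathbb{A}^2_{\mathbb{C}}$. Since $S_{\mathbb{C}}$ is a smooth affine surface with $\kappa(S_{\mathbb{C}})=-\infty$, Miyanishi's structure theory of open surfaces of negative Kodaira dimension \cite{MiyBook} provides an $\mathbb{A}^1$-fibration $\pi_{\mathbb{C}}\colon S_{\mathbb{C}}\to C_{\mathbb{C}}$, and Proposition~\ref{prop:NegKod-FiberStruct}(1) identifies the base as $C_{\mathbb{C}}\simeq\mathbb{A}^1_{\mathbb{C}}$.

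Write $A=\mathcal{O}(S_{\mathbb{C}})=\mathcal{O}(S)\otimes_{\mathbb{R}}\mathbb{C}$ and let $\sigma$ be the antilinear involution giving the real structure. The fibration $\pi_{\mathbb{C}}$ corresponds to the subalgebra $A_{\pi}=\pi_{\mathbb{C}}^{*}\mathcal{O}(\mathbb{A}^1_{\mathbb{C}})\subset A$ of functions constant along its fibers; this subalgebra depends only on the family of general fibers of $\pi_{\mathbb{C}}$, not on the coordinate chosen on the base. The conjugate $\sigma(A_{\pi})$ is the analogous subalgebra of the conjugate fibration $\sigma\circ\pi_{\mathbb{C}}\circ\sigma$, which is again an $\mathbb{A}^1$-fibration of $S_{\mathbb{C}}$. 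The key input is that, apart from $\mathbb{A}^2_{\mathbb{C}}$, the only smooth $\mathbb{Q}$-acyclic surface of negative Kodaira dimension admitting two $\mathbb{A}^1$-fibrations with distinct general fibers (equivalently, with trivial Makar--Limanov invariant) is the complement $\mathbb{P}^2_{\mathbb{C}}\setminus Q_{\mathbb{C}}$ of a smooth conic; this is precisely the dichotomy visible in Example~\ref{Ex:Q-acyclic-boundary-chains}, where the two end curves of the boundary chain share the same self-intersection only in the conic case. Excluding that case for the moment, $\pi_{\mathbb{C}}$ is the unique $\mathbb{A}^1$-fibration of $S_{\mathbb{C}}$ up to an automorphism of the base, whence $\sigma(A_{\pi})=A_{\pi}$.

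Thus $A_{\pi}$ is a $\sigma$-stable $\mathbb{C}$-subalgebra of $A$, so it descends: $A_{0}:=A_{\pi}^{\sigma}$ is an $\mathbb{R}$-subalgebra of $\mathcal{O}(S)$ with $A_{0}\otimes_{\mathbb{R}}\mathbb{C}=A_{\pi}$, and $B_{0}=\mathrm{Spec}(A_{0})$ is a real form of $\mathbb{A}^1$ for which the inclusion $A_{0}\hookrightarrow\mathcal{O}(S)$ defines a flat surjective morphism $\pi\colon S\to B_{0}$ over $\mathbb{R}$ whose complexification is $\pi_{\mathbb{C}}$. Since $S(\mathbb{R})\neq\emptyset$, also $B_{0}(\mathbb{R})\neq\emptyset$, and the only form of $\mathbb{A}^1$ over $\mathbb{R}$ with a rational point is $\mathbb{A}^1_{\mathbb{R}}$, so $B_{0}\simeq\mathbb{A}^1_{\mathbb{R}}$. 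The same supply of real points, available because $S(\mathbb{R})$ is non-compact, forces the general real fibers of $\pi$ to be untwisted affine lines, which shows that the generic fiber of $\pi$, a priori only a form of the affine line over $\mathbb{R}(t)$, is in fact $\mathbb{A}^1_{\mathbb{R}(t)}$. Hence $\pi\colon S\to\mathbb{A}^1_{\mathbb{R}}$ is an $\mathbb{A}^1$-fibration defined over $\mathbb{R}$, as wanted.

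I expect the main obstacle to be the excluded case $S_{\mathbb{C}}\simeq\mathbb{P}^2_{\mathbb{C}}\setminus Q_{\mathbb{C}}$, where the complex fibration is genuinely non-unique and the descent above no longer singles out a Galois-invariant fibration; here the non-compactness hypothesis becomes decisive. In this case $S\simeq\mathbb{P}^2_{\mathbb{R}}\setminus Q$ for a smooth real conic $Q$, and if $Q(\mathbb{R})=\emptyset$ then $S(\mathbb{R})\approx\mathbb{R}\mathbb{P}^2$ is compact, contrary to assumption; so $Q(\mathbb{R})\neq\emptyset$ and $Q\simeq\mathbb{P}^1_{\mathbb{R}}$. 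The subgroup of $\mathrm{Aut}(\mathbb{P}^2_{\mathbb{R}})$ preserving $Q$ is then $\mathrm{PGL}_{2,\mathbb{R}}$ acting through the second Veronese embedding, and it contains a nontrivial unipotent one-parameter subgroup defined over $\mathbb{R}$. Its action on $S=\mathbb{P}^2_{\mathbb{R}}\setminus Q$ is a nontrivial algebraic $\mathbb{G}_{a,\mathbb{R}}$-action, whose algebraic quotient $S\to S/\!/\mathbb{G}_{a,\mathbb{R}}$ is an $\mathbb{A}^1$-fibration defined over $\mathbb{R}$, the base being $\mathbb{A}^1_{\mathbb{R}}$ once more because $S(\mathbb{R})\neq\emptyset$. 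This settles the remaining case and completes the construction in all cases.
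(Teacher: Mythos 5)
Your overall strategy --- descend the complex $\mathbb{A}^{1}$-fibration by Galois invariance when it is unique, and treat the non-unique case separately --- is the same as the paper's, and your descent argument in the unique case is correct (it is essentially the paper's first paragraph, rephrased via the $\sigma$-stable subalgebra $A_{\pi}$). The gap is your key classification claim for the non-unique case: it is \emph{false} that $\mathbb{A}_{\mathbb{C}}^{2}$ and $\mathbb{P}_{\mathbb{C}}^{2}\setminus Q_{\mathbb{C}}$ are the only smooth $\mathbb{Q}$-acyclic surfaces of negative Kodaira dimension carrying two $\mathbb{A}^{1}$-fibrations with distinct general fibers. By \cite{Du05} (as quoted in the paper's proof), such surfaces are exactly those, other than $\mathbb{A}_{\mathbb{C}}^{1}\times(\mathbb{A}_{\mathbb{C}}^{1}\setminus\{0\})$, admitting a smooth SNC completion whose boundary is a chain of rational curves; and Example \ref{Ex:Q-acyclic-boundary-chains} exhibits infinitely many $\mathbb{Q}$-acyclic surfaces of negative Kodaira dimension of this kind: for every multiplicity $\mu\geq2$ of the unique degenerate fiber one obtains such a surface with $H_{1}\simeq\mathbb{Z}/\mu\mathbb{Z}$, completable by the chain of Figure \ref{fig:negKod-chains-boundary}, and only the case where $E$ is the chain $[-2,-2,-2]$ gives the conic complement. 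What that example actually records is that among these boundary chains only the conic one is symmetric (equal self-intersections of the two end curves); you have read this symmetry statement as a uniqueness-of-fibration statement, which it is not.

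Because of this, your trichotomy misses exactly the surfaces that constitute the hard case: $\mathbb{Q}$-acyclic surfaces of negative Kodaira dimension with an asymmetric boundary chain (for instance one with $H_{1}\simeq\mathbb{Z}/3\mathbb{Z}$). For these, the complex $\mathbb{A}^{1}$-fibration is not unique, so $\sigma(A_{\pi})=A_{\pi}$ does not follow, and they are not conic complements, so your explicit $\mathbb{G}_{a}$-construction does not apply either; your proof gives nothing for them. The paper disposes of this case by a different mechanism: it takes an SNC-minimal completion $(V,B)$ of $S$ defined over $\mathbb{R}$, shows that $B$ is a geometrically rational chain, and distinguishes two subcases. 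If all components of $B$ are $\mathbb{R}$-rational, Lemma \ref{lem:Chains-normal-forms} produces a real completion with boundary $F\vartriangleright C\vartriangleright T$, $F^{2}=0$, $C^{2}=-1$, and the pencil $|F|$ restricts to the desired $\mathbb{A}^{1}$-fibration over $\mathbb{A}_{\mathbb{R}}^{1}$ --- no uniqueness over $\mathbb{C}$ is used. If some component of $B$ is not $\mathbb{R}$-rational, then Corollary \ref{cor:Cplx-Normal-Form} forces the complexified chain to be a palindrome, and the invariance of chain types under isomorphism (\cite{Gi-Da1}, \cite{BD11}), compared with Example \ref{Ex:Q-acyclic-boundary-chains}, shows that the only $\mathbb{Q}$-acyclic possibility is the conic complement; only then does your final case enter. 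A secondary point: in that final case you assert without proof that every real form of $\mathbb{P}_{\mathbb{C}}^{2}\setminus Q_{\mathbb{C}}$ is of the shape $\mathbb{P}_{\mathbb{R}}^{2}\setminus Q$; since this surface has a large automorphism group not acting through any single completion, this too needs an argument, which in the paper comes out of the same real-completion analysis.
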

\begin{proof}
By \cite[Chapter 4, Theorem 4.3.1]{MiyBook} $S_{\mathbb{C}}$ admits an $\mathbb{A}^{1}$-fibration
$q:S_{\mathbb{C}}\rightarrow \mathbb{A}^1_{\mathbb{C}}$.  If $q$ is the
unique such $\mathbb{A}^{1}$-fibration on $S_{\mathbb{C}}$ up to
composition by automorphisms of $\mathbb{A}^1_{\mathbb{C}}$, it must be the complexification of a morphism 
$\pi:S\rightarrow C$ defined over $\mathbb{R}$. Since the affine line does not have nontrivial forms over fields of
characteristic zero, we conclude that $C=\mathbb{A}^1_{\mathbb{R}}$ and that the generic fiber of $\pi$ is isomorphic to the affine line
over the function field of $C$. So $\pi:S\rightarrow C\simeq\mathbb{A}_{\mathbb{R}}^{1}$
is the desired $\mathbb{A}^{1}$-fibration defined over $\mathbb{R}$.

So it remains to consider the case where $S_{\mathbb{C}}$ admits
at least two $\mathbb{A}^{1}$-fibrations over $\mathbb{A}_{\mathbb{C}}^{1}$  
with distinct general fibers. By virtue of \cite[Theorem 2.4]{Du05} this holds
if and only if $S_{\mathbb{C}}$ is not isomorphic to $\mathbb{A}_{\mathbb{C}}^{1}\times(\mathbb{A}_{\mathbb{C}}^{1}\setminus\{0\})$
and admits a smooth projective completion whose boundary divisor consists
of a chain of smooth proper rational curves. Furthermore, the boundary
in any other smooth projective completion of $S_{\mathbb{C}}$ with SNC-minimal boundary is
then again a chain of rational curves. Now let $(V,B)$ be a smooth
projective completion of $S$ defined over $\mathbb{R}$ and such that $B$
is a geometrically rational tree minimal over $\mathbb{R}$. The tree
$B_{\mathbb{C}}$ is then minimal over $\mathbb{C}$ unless it contains
pairs of non-disjoint $(-1)$-curves exchanged by the real structure
$\sigma$, in which case a minimal smooth projective completion $(V_{0},B_{0})$
over $\mathbb{C}$ is obtained from $(V_{\mathbb{C}},B_{\mathbb{C}})$
by blowing-down all possible successive $(-1)$-curves in $B_{\mathbb{C}}$.
Since $B_{0}$ must be a chain, it follows that $B_{\mathbb{C}}$
is a chain too, and so, $B$ is a geometrically  rational chain. Indeed,
if $B_{\mathbb{C}}$ contains an irreducible component $R$ intersecting
at least three other components, then because $B_{0}$ is a chain,
at least one the connected component of $B_{\mathbb{C}}\setminus R$,
say $D$, is contracted to a smooth point by the above sequence of
blow-downs. So it must contain at least a $(-1)$-curve $E$ intersecting
at most two other irreducible components of $B_{\mathbb{C}}$, one
of which being, by the minimality assumption, its image $\overline{E}$
by the real structure $\sigma$. But then $D$ contains a pair of
non-disjoint $(-1)$-curves, contradicting the negative definiteness
of its intersection matrix. 

a) If $B$ consists of $\mathbb{R}$-rational components only then
by virtue of 1) in Lemma \ref{lem:Chains-normal-forms}, there
exists a smooth projective completion $(V',B')$ of $S$ defined over $\mathbb{R}$
whose boundary $B'$ is a chain of $\mathbb{R}$-rational curves of
the form $B'=F\vartriangleright C\vartriangleright T$ where $F^{2}=0$,
$C^{2}=-1$ and $T$ is either empty, or an $\mathbb{R}$-rational
curve with self-intersection $0$, or a chain of $\mathbb{R}$-rational
curves with self-intersections $\leq-2$. Since $V'$ is $\mathbb{R}$-rational,
the complete linear system $|F|$ defines a $\mathbb{P}^{1}$-fibration
$V'\rightarrow\mathbb{P}_{\mathbb{R}}^{1}$ having $F$ as a fiber
and $C$ as a section and whose restriction to $S$ is an $\mathbb{A}^{1}$-fibration
$\pi:S\rightarrow\mathbb{A}_{\mathbb{R}}^{1}$ if $T$ does not consists
of a unique curve, or an $\mathbb{A}^{1}$-fibration $\pi:S\rightarrow\mathbb{A}_{\mathbb{R}}^{1}\setminus\{0\}$
otherwise, the second case being excluded by the fact that $S_{\mathbb{C}}(\mathbb{C})$
is $\mathbb{Q}$-acyclic by hypothesis. 

b) If $B$ contains at least one non $\mathbb{R}$-rational component
then by virtue of Corollary \ref{cor:Cplx-Normal-Form}, there exists
a smooth projective completion $(V',B')$ of $S_{\mathbb{C}}$ whose boundary
$B'$ is a chain of $\mathbb{C}$-rational curves $B'=F\vartriangleright C\vartriangleright E$
where $F^{2}=0$, $C^{2}=-1$ and where $E$ is either empty, or an
irreducible curve with self-intersection $0$ or a chain of curves
with negative self-intersections listed in the corollary. In the first
two cases, it follows that $S_{\mathbb{C}}$ is either isomorphic
to $\mathbb{A}_{\mathbb{C}}^{2}$, in which case $S\simeq\mathbb{A}_{\mathbb{R}}^{2}$
and we are done, or it admits an $\mathbb{A}^{1}$-fibration over $\mathbb{A}_{\mathbb{C}}^{1}\setminus\{0\}$.
But the second possibility is again excluded by the hypothesis that
$S_{\mathbb{C}}(\mathbb{C})$ is $\mathbb{Q}$-acyclic. In the remaining
cases, it follows from the description given in Corollary \ref{cor:Cplx-Normal-Form}
that the sequences of self-intersections of the irreducible components
of $E$ are all symmetric. By virtue of \cite[Corollary 2]{Gi-Da1}
(see also \cite[Corollary 3.2.3]{BD11}), such sequences up to reversion
of the ordering are invariants of the isomorphism type of $S_{\mathbb{C}}$.
Comparing with the sequences obtained in Example \ref{Ex:Q-acyclic-boundary-chains}
above for $\mathbb{Q}$-acyclic complex surfaces admitting a smooth
projective completion whose boundary is a chain, we conclude that the only
possibility is that $E$ consists of a chain of three curves with
self-intersection $-2$, whence that $S_{\mathbb{C}}$ is isomorphic
to the complement of a smooth conic in $\mathbb{P}_{\mathbb{C}}^{2}$.
This implies in turn that $S$ is isomorphic to the complement of
a smooth conic $D$ in $\mathbb{P}_{\mathbb{R}}^{2}$, and since $S(\mathbb{R})$
is not compact, $D$ has an $\mathbb{R}$-rational point $p$. It
follows that $S$ admits an $\mathbb{A}^{1}$-fibration $\pi:S\rightarrow\mathbb{A}_{\mathbb{R}}^{1}$
defined over $\mathbb{R}$, induced by the restriction of pencil of
conics osculating $D$ at $p$, i.e the pencil generated by $D$ and
twice its tangent line at $p$. 
\end{proof}

\begin{rem}
The proof of Proposition \ref{prop:A1-fib-existence} shows more generally
that a smooth affine geometrically integral surface $S$ defined over
$\mathbb{R}$ and of negative Kodaira dimension admits an $\mathbb{A}^{1}$-fibration
$\pi:S\rightarrow\mathbb{A}_{\mathbb{R}}^{1}$ defined over $\mathbb{R}$
provided that $S_{\mathbb{C}}$ does not admit any smooth projective completion
$(V,B)$ not defined over $\mathbb{R}$ and whose boundary is a chain
of rational curves of the form $B=F\vartriangleright C\vartriangleright E$
where $F^{2}=0$, $C^{2}=-1$ and $E$ is a nonempty ordered chain
of rational curves whose type $[-a_{1},\ldots,-a_{n}]$, $a_{i}\geq2$,
is symmetric, in the sense that the sequences $(-a_{1},\ldots,-a_{n})$
and $(-a_{n},\ldots,-a_{1})$ are equal. Chains with this symmetry
property are called palindromes in \cite{BD11}. 

The $\mathbb{Q}$-acyclicity of $S_{\mathbb{C}}(\mathbb{C})$ and the non compactness of $S(\mathbb{R})$
play a crucial in the characterization obtained in this proposition.
Indeed, as already observed in the proof of Proposition \ref{prop:A1-fib-existence},
the complexification of the complement $S$ of a smooth conic $D\subset\mathbb{P}_{\mathbb{R}}^{2}$
without $\mathbb{R}$-rational point admits a smooth projective completion
$(V,B)$ not defined over $\mathbb{R}$ whose boundary $B$ has the
form $B=F\vartriangleright C\vartriangleright E$ where $E$ is a
palindrome of type $[-2,-2,-2]$, and this surface $S$ does not admit
any $\mathbb{A}^{1}$-fibration defined over $\mathbb{R}$. 

One can show along the same lines that there even exists smooth surfaces
$S$ of negative Kodaira dimension with $S(\mathbb{R})\approx\mathbb{R}^{2}$
but $S_{\mathbb{C}}(\mathbb{C})$ not $\mathbb{Q}$-acyclic which
do not admit any $\mathbb{A}^{1}$-fibration $\pi:S\rightarrow\mathbb{A}_{\mathbb{R}}^{1}$
defined over $\mathbb{R}$. We leave to the reader to check that this
holds for instance for the nontrivial real form $S=\{x^{2}+y^{2}=z^{3}-1\}\subset\mathbb{A}_{\mathbb{R}}^{3}$
of the surface $S'=\{uv=z^{3}-1\}\subset\mathbb{A}_{\mathbb{R}}^{3}$,
whose complexification has $H_{2}(S_{\mathbb{C}}(\mathbb{C});\mathbb{Z})\simeq\mathbb{Z}^{2}$.
The real loci of $S$ and $S'$ are both homeomorphic to $\mathbb{R}^{2}$
via the maps $\mathbb{R}^{2}\rightarrow S(\mathbb{R})$, $(x,y)\mapsto(x,y,\sqrt[3]{x^{2}+y^{2}+1})$
and $\mathbb{R}^{2}\rightarrow S'(\mathbb{R})$, $(u,v)\mapsto(u,v,\sqrt[3]{uv+1})$
respectively. The surface $S'$ admits an $\mathbb{A}^{1}$-fibration
$\pi'=\mathrm{pr}_{u}\mid_{S'}:S'\rightarrow\mathbb{A}_{\mathbb{R}}^{1}$
defined over $\mathbb{R}$ whose unique degenerate fiber $(\pi')^{-1}(0)$
consists of the disjoint union of the curves $\mathbb{A}_{\mathbb{R}}^{1}=\mathrm{Spec}(\mathbb{R}[v])$
and $\mathbb{A}_{\mathbb{C}}^{1}=\mathrm{Spec}(\mathbb{R}[z]/((z^{2}+z+1)[v])$.
So $\kappa(S)=\kappa(S_{\mathbb{C}})=\kappa(S')=-\infty$. Furthermore,
$S'$ admits a smooth projective completion $(V',B')$ defined over $\mathbb{R}$
whose boundary $B$ is a chain of three $\mathbb{R}$-rational curves
$F\vartriangleright C\vartriangleright E$ as above where $E$ is
palindrome consisting of a unique curve with self-intersection $-3$
(see e.g. \cite[§ 5.4]{BD11}). 
\end{rem}

\subsection{$\mathbb{R}$-biregular birational rectification of $\mathbb{Q}$-acyclic
euclidean planes of negative Kodaira dimension}

In this subsection, we consider the question of classification of
$\mathbb{Q}$-acyclic euclidean planes of negative Kodaira dimension
up to $\mathbb{R}$-biregular birational equivalence. We say for short
that such an euclidean plane $S$ is $\mathbb{R}$-\emph{biregularly
birationally
 rectifiable} if there exists an $\mathbb{R}$-biregular
birational map $\varphi:S\dashrightarrow\mathbb{A}_{\mathbb{R}}^{2}$,
i.e. a birational map defined over $\mathbb{R}$, containing the real
locus of $S$ in its domain of definition and inducing a diffeomorphism
$\varphi(\mathbb{R}):S(\mathbb{R})\stackrel{\simeq}{\rightarrow}\mathbb{R}^{2}=\mathbb{A}_{\mathbb{R}}^{2}(\mathbb{R})$.
The following theorem implies in particular that a large class of
$\mathbb{Q}$-acyclic euclidean planes of negative Kodaira are indeed
$\mathbb{R}$-biregularly birationally
 rectifiable.
\begin{thm}
\label{thm:Bir-rectif} Let $S$ be a smooth affine geometrically
integral surface defined over $\mathbb{R}$ with an $\mathbb{A}^{1}$-fibration
$\pi:S\rightarrow\mathbb{A}_{\mathbb{R}}^{1}$. Suppose that $S(\mathbb{R})\approx\mathbb{R}^{2}$
and that all but at most one fibers of $\pi$ over $\mathbb{R}$-rational
points of $\mathbb{A}_{\mathbb{R}}^{1}$ contain a reduced $\mathbb{R}$-rational
irreducible component. Then $S$ is $\mathbb{R}$-biregularly birationally rectifiable.
\end{thm}

The assumptions of Theorem \ref{thm:Bir-rectif} being satisfied by any $\mathbb{Q}$-acyclic euclidean plane $S$ admitting an $\mathbb{A}^{1}$-fibration $\pi:S\rightarrow\mathbb{A}_{\mathbb{R}}^{1}$ with at most one degenereate, we obtain:
 
\begin{cor} Every $\mathbb{Q}$-acyclic euclidean plane $S$ with $S(\mathbb{R})\approx\mathbb{R}^{2}$ admitting
an $\mathbb{A}^{1}$-fibration $\pi:S\rightarrow\mathbb{A}_{\mathbb{R}}^{1}$ with at most one degenerate fiber is $\mathbb{R}$-biregularly birationally rectifiable.
\end{cor}

The proof of Theorem \ref{thm:Bir-rectif} consists of two steps,
given in $\S$ \ref{sub:Standard--models}-\ref{sub:Proof-of-Theorem}
below. We first reduce via suitable $\mathbb{R}$-biregular birational
maps to the case of surfaces $S$ equipped with an $\mathbb{A}_{\mathbb{R}}^{1}$-fibration
$\pi:S\rightarrow\mathbb{A}_{\mathbb{R}}^{1}$ defined over $\mathbb{R}$
with irreducible fibers and at most one degenerate $\mathbb{R}$-rational
fiber. Then we show by induction on the number of irreducible components
in the boundary $B$ of a smooth projective SNC-minimal completion 
$(V,B)$ of $S$ 
defined over $\mathbb{R}$ that every such surface is $\mathbb{R}$-biregularly
birationally rectifiable. 

\subsubsection{\label{sub:Standard--models} Standard $r$-models}

The simplest surfaces $\pi:S\rightarrow\mathbb{A}_{\mathbb{R}}^{1}$
satisfying the hypotheses of Theorem \ref{thm:Bir-rectif} are those
for which $\pi:S\rightarrow\mathbb{A}_{\mathbb{R}}^{1}$ restricts
to a trivial $\mathbb{A}^{1}$-bundle over $\mathbb{A}_{\mathbb{R}}^{1}\setminus\{0\}$
and $\pi^{*}(\{0\})$ is geometrically irreducible, of odd multiplicity
$m\geq1$. Indeed, the fact that $S(\mathbb{R})\approx\mathbb{R}^{2}$
is then guaranteed by 2) in Proposition \ref{prop:NegKod-FiberStruct}. 

\begin{parn}  \label{par:Standard-Comp} When specialized to such
surfaces, the general description given in $\S$ \ref{par:A1-fib-completion}
provides a smooth projective completion $(V,B)$ of $S$ defined
over $\mathbb{R}$ into a surface obtained from $\rho_{1}:\mathbb{F}_{1}\rightarrow\mathbb{P}_{\mathbb{R}}^{1}$
with exceptional section $C_{0}\simeq\mathbb{P}_{\mathbb{R}}^{1}$
and a pair of fixed $\mathbb{R}$-rational fibers $E_{-1}=\rho_{1}^{-1}(\{0\})$
and $F_{\infty}=\rho_{1}^{-1}(\mathbb{P}_{\mathbb{R}}^{1}\setminus\mathbb{A}_{\mathbb{R}}^{1})$
via a birational morphism $\tau:V\rightarrow\mathbb{P}_{\mathbb{R}}^{1}$
defined over $\mathbb{R}$ of the following form:

- If $m=1$ then $\pi:S\rightarrow\mathbb{A}_{\mathbb{R}}^{1}$ is
isomorphic to the trivial $\mathbb{A}^{1}$-bundle $\mathrm{pr}_{1}:S\simeq\mathbb{A}_{\mathbb{R}}^{2}\rightarrow\mathbb{A}_{\mathbb{R}}^{1}$,
and we have an isomorphism $(V,B)=(\mathbb{F}_{1},F_{\infty}\cup C_{0})$. 

- Otherwise, if $m\geq2$ then $\tau=\tau_{0}\circ\cdots\circ\tau_{n}$
is a sequence of blow-ups of $\mathbb{R}$-rational points, starting
with the blow-up $\tau_{0}:V_{1}\rightarrow V_{0}=\mathbb{F}_{1}$
of a point $p_{0}\in E_{-1}\setminus C_{0}$, say with exceptional
divisor $E_{0}$, followed by the blow-up $\tau_{1}:V_{2}\rightarrow V_{1}$
of the intersection point $p_{1}$ of $E_{0}$ with the proper transform
of $E_{-1}$, with exceptional divisor $E_{1}$, and continuing with
a sequence of blow-ups $\tau_{i}:V_{i+1}\rightarrow V_{i}$ of $\mathbb{R}$-rational
points $p_{i}\in E_{i-1}$ with exceptional divisor $E_{i}$. The
last step $\tau_{n}:V=V_{n+1}\rightarrow V_{n}$ is the blow-up of
an $\mathbb{R}$-rational point $p_{n}\in E_{n-1}$ with exceptional
divisor $A_{0}$. The surface $S$ is then isomorphic to the complement
in $V$ of the SNC divisor $B=F_{\infty}\cup C_{0}\cup E$, where
$E=\bigcup_{i=0}^{n}E_{i-1}$ is a tree of $\mathbb{R}$-rational
curves, the $\mathbb{A}^{1}$-fibration $\pi:S\rightarrow\mathbb{A}_{\mathbb{R}}^{1}$
coincides with the restriction to $S$ of the $\mathbb{P}^{1}$-fibration
$\overline{\pi}:\rho_{1}\circ\tau:V\rightarrow\mathbb{P}_{\mathbb{R}}^{1}$
and $\pi^{-1}(\{0\})=A_{0}\cap S$. Note that by construction $E_{i}^{2}\leq-2$
for every $i=-1,\ldots,n-1$. 

\begin{figure}[!htb]
\input{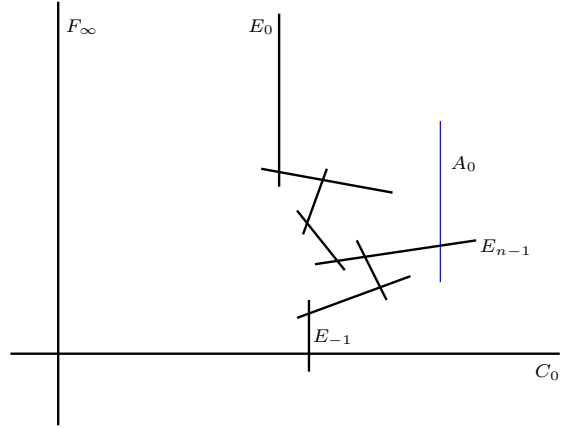} 
\caption{Structure of the divisor $B\cup A_0$}  
\label{fig:rectif-0}
\end{figure}

Since $m\geq2$ is odd, the sequence $\tau_{i}$ is not completely
arbitrary: for instance, the branching components
of the tree $E$, i.e. the irreducible components of $E$ intersecting
at least two other irreducible components, must have odd multiplicities
as irreducible components of the degenerate fiber $\overline{\pi}^{*}(\{0\})$
of the $\mathbb{P}^{1}$-fibration $\overline{\pi}:V\rightarrow\mathbb{P}_{\mathbb{R}}^{1}$.
More precisely, we have the following description 

\end{parn}
\begin{lem}
\label{lem:Standard-Comp-Chain} For a pair $(V,B=F_{\infty}\cup C_{0}\cup E)$
as in $\S$ \ref{par:Standard-Comp}, the following holds: 

1) Every branching component of $E$ has odd multiplicity as an irreducible
component of the degenerate fiber $\overline{\pi}^{*}(\{0\})$. 

2) Let $L\subset C_{0}\cup E$ be the unique minimal subchain containing
$C_{0}$ and $E_{0}$ and let $E_{i}$ be the unique branching component
of $E$ contained in $L$. If $E_{i}\cap E_{0}\neq\emptyset$ then
$i=2p$ for some $p\geq1$ and $L=C_{0}\vartriangleright E_{-1}\vartriangleright\cdots\vartriangleright E_{2p}\vartriangleright E_{0}$
is a chain of type $(-1,-2,\ldots,-2,E_{2p}^{2},-(2p+1))$. \end{lem}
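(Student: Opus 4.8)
The plan is to work on a smooth projective completion $S\hookrightarrow(V,B=F_{\infty}\cup C_{0}\cup E)$ as in $\S$\ref{par:Standard-Comp}, and to analyse the scheme-theoretic degenerate fibre $F=\overline{\pi}^{*}(\{0\})=\sum_{k}m_{k}\Theta_{k}$ of the $\mathbb{P}^{1}$-fibration $\overline{\pi}:V\rightarrow\mathbb{P}_{\mathbb{R}}^{1}$, whose support is $E\cup A_{0}$. Since $V$ is $\mathbb{R}$-rational, $F$ is a tree of rational curves, and for each component $\Theta_{k}$ the identity $F\cdot\Theta_{k}=0$ reads $\sum_{\Theta_{l}\sim\Theta_{k}}m_{l}=-m_{k}\Theta_{k}^{2}$. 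I record three anchors driving the argument. First, $C_{0}$ is a section meeting $F$ only along $E_{-1}$, so $m_{E_{-1}}=C_{0}\cdot F=C_{0}\cdot F_{\infty}=1$. Second, $\pi^{-1}(\{0\})=A_{0}\cap S$ is the affine line of odd multiplicity $m$ and $A_{0}\cap B$ is a single point; hence $p_{n}$ is a free blow-up, $A_{0}$ is a leaf of $F$ with $m_{A_{0}}=m$ odd, and $A_{0}$ is the unique $(-1)$-curve of $F$ since $E_{i}^{2}\leq-2$ for all $i$. Third, $E_{0}$ is created by a free blow-up on the multiplicity-$1$ curve $E_{-1}$ and is never again a free blow-up centre, so it remains a leaf of $F$ with $m_{E_{0}}=1$. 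Throughout I use the standard rule that the exceptional curve of a blow-up acquires multiplicity equal to the sum of the multiplicities of the components through its centre, all pre-existing multiplicities being preserved.

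For part 1) I would first note that $\tau=\tau_{0}\circ\cdots\circ\tau_{n}$ is a \emph{linear tower}: each centre $p_{i}$ lies on the curve $E_{i-1}$ produced at the previous step. In such a tower a component acquires a third neighbour, and so becomes branching (degree $\geq3$), exactly when a later step performs a satellite blow-up at one of its nodes. The aim is to prove that every such branching $E_{j}$ has odd $m_{E_{j}}$. The approach is to run the tower backwards, contracting $(-1)$-curves in the reverse order $A_{0}=E_{n},E_{n-1},\ldots$: multiplicities are preserved, and each contraction either deletes a $(-1)$-tip, giving $m_{E_{i}}=m_{E_{i-1}}$, or rejoins two curves across a degree-$2$ $(-1)$-curve, giving $m_{E_{i}}=m_{E_{i-1}}+m_{X}$. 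Tracking these relations modulo $2$ from the two odd anchors $m_{E_{-1}}=m_{E_{0}}=1$ at the start and the forced odd value $m_{A_{0}}=m$ at the end, the key point to establish is that a branching multiplicity cannot be even, so that oddness propagates to every branching component. Equivalently, one may exploit the linear system $\vec{m}_{T}=-m_{E_{j}}M_{T}^{-1}\vec{e}$ governing the multiplicities on each branch $T$ hanging off $E_{j}$, combining $m_{E_{-1}}=1$ on the branch meeting $C_{0}$ with the presence of the unique $(-1)$-curve $A_{0}$ on another branch to pin down the parity of $m_{E_{j}}$. \textbf{This parity bookkeeping along the tower is the main obstacle}, since it must be carried out uniformly over the possible shapes of the tree $E$.

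Granting part 1), part 2) follows cleanly. Let $L=C_{0}\vartriangleright E_{-1}\vartriangleright\cdots\vartriangleright E_{i}\vartriangleright E_{0}$ be the minimal chain joining $C_{0}$ to $E_{0}$, with unique branching component $E_{i}$, and assume $E_{i}\cap E_{0}\neq\emptyset$. Since $E_{0}$ is a leaf of $F$ with $m_{E_{0}}=1$ whose single neighbour is $E_{i}$, the fibre relation at $E_{0}$ gives $-E_{0}^{2}=m_{E_{i}}$, which is odd by part 1); writing $m_{E_{i}}=2p+1$ yields $E_{0}^{2}=-(2p+1)$. Because after its birth $E_{0}$ is modified only by satellite blow-ups at its node, its successive neighbours are the curves produced by the run of satellite blow-ups $p_{1}=E_{0}\cap E_{-1},\,p_{2}=E_{0}\cap E_{1},\ldots$ spiralling around $E_{0}$; these occur at consecutive steps, since once the tower leaves the neighbourhood of $E_{0}$ its newest curve is never again adjacent to $E_{0}$. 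Along this spiral $m_{E_{k}}=m_{E_{0}}+m_{E_{k-1}}=k+1$, so the branching $E_{i}$, having odd multiplicity, must be $E_{2p}$ with $i=2p$ even. Finally, each intermediate curve $E_{-1},E_{1},\ldots,E_{2p-1}$ is touched exactly twice (creation, then one satellite blow-up at its node), hence has self-intersection $-2$, while $C_{0}^{2}=-1$ is untouched and $E_{2p}^{2}$ is left free. This exhibits $L=C_{0}\vartriangleright E_{-1}\vartriangleright\cdots\vartriangleright E_{2p}\vartriangleright E_{0}$ of type $(-1,-2,\ldots,-2,E_{2p}^{2},-(2p+1))$, completing the proof.
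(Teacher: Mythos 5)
Your part 2) is essentially the paper's argument and is fine (granting part 1): the fibre relation at the leaf $E_{0}$ gives $-E_{0}^{2}=m_{E_{i}}$, the spiral of consecutive satellite blow-ups at the node on $E_{0}$ gives $m_{E_{k}}=k+1$, and the self-intersection bookkeeping along $L$ is correct. The gap is in part 1), and you flag it yourself: ``this parity bookkeeping along the tower is the main obstacle'' is precisely the statement of 1), so nothing is actually proven there --- neither the backward-contraction scheme nor the linear-system scheme is carried out. Worse, your structural claim about branching is backwards: you write that a component ``becomes branching exactly when a later step performs a satellite blow-up at one of its nodes,'' but a satellite blow-up at a node $E_{i}\cap X$ \emph{preserves} the number of neighbours of $E_{i}$ (it trades $X$ for the new exceptional curve), whereas a blow-up at a \emph{simple} point of $E_{i}$ raises it by one. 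So in the linear tower, $E_{i}$ is branching if and only if it was created by a satellite blow-up \emph{and} the very next blow-up $\tau_{i+1}$ is at a simple point of $E_{i}$. With the inverted characterization, any parity propagation you attempt along the tower will attach the wrong multiplicity rule ($m_{E_{i+1}}=m_{E_{i}}+m_{X}$ versus $m_{E_{i+1}}=m_{E_{i}}$) to the wrong geometric event, so the bookkeeping would not close up even if pushed through.

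The missing idea is a short forward induction, and it runs in the opposite direction from your ``propagate oddness from the anchors'' plan: suppose some branching component $E_{i}$ has \emph{even} multiplicity. By the correct characterization above, $\tau_{i+1}$ is a blow-up at a simple point of $E_{i}$, hence $m_{E_{i+1}}=m_{E_{i}}$ is even. Since $E_{i+1}$ meets only $E_{i}$, the next centre $p_{i+2}$ is either a simple point of $E_{i+1}$ or the node $E_{i+1}\cap E_{i}$; in both cases $m_{E_{i+2}}$ is even, and inductively every curve $E_{j}$ with $j\geq i$ meets only curves $E_{k}$ with $k\geq i$, so all of their multiplicities are even. In particular $m_{A_{0}}$ would be even, contradicting the hypothesis that $A_{0}\cap S$ is a fibre of odd multiplicity $m$. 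This is the paper's proof; it needs no global control over ``the possible shapes of the tree $E$,'' only the local observation about $\tau_{i+1}$ that your inverted branching criterion prevents you from making.
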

\begin{proof}
If $E$ has a branching component with even multiplicity, say $E_{i}$
for some $i\geq2$, then since $\tau_{i+1}:V_{i+2}\rightarrow V_{i+1}$
necessarily consists of the blow-up of a simple point of $E_{-1}\cup\cdots\cup E_{i}$
supported on $E_{i}$, the proper transform in $V$ of its exceptional
divisor $E_{i+1}$ has the same multiplicity as $E_{i}$ in $\overline{\pi}^{*}(\{0\})$.
Since the center of the next blow-up $\tau_{i+2}$ is either the point
$E_{i}\cap E_{i+1}$ or a simple point supported on $E_{i+1}$, it
follows by induction that the proper transform in $V$ of every divisor
$E_{j}$, $j\geq i$ arises with even multiplicity in $\overline{\pi}^{*}(\{0\})$.
As a consequence, $A_{0}$ would have even multiplicity as an irreducible
component of $\overline{\pi}^{*}(\{0\})$, in contradiction with the
fact that $A_{0}\cap S$ has odd multiplicity $m$ as a scheme theoretic
fiber of $\pi$. The second assertion follows immediately from the
observation that if $E_{i}\cap E_{0}\neq\emptyset$ then the multiplicity
of $E_{i}$ as a component of $\overline{\pi}^{*}(\{0\})$ is equal
to $-E_{0}^{2}\geq2$. \end{proof}
\begin{defn}
\label{def:Standar-r-pair} An $r$\emph{-standard} $\mathbb{A}^{1}$-\emph{fibered
surface} is a smooth geometrically integral affine surface $S$ with
an $\mathbb{A}^{1}$-fibration $\pi:S\rightarrow\mathbb{A}_{\mathbb{R}}^{1}$
defined over $\mathbb{R}$, restricting to a trivial $\mathbb{A}^{1}$-bundle
over $\mathbb{A}_{\mathbb{R}}^{1}\setminus\{0\}$ and such $\pi^{-1}(\{0\})$
is geometrically irreducible, of odd multiplicity $m\geq1$\emph{.}
An \emph{$r$-standard pair} is a pair $(V,B)$ consisting of a smooth
geometrically integral projective surface $V$ and a geometrically
rational tree $B$ both defined over $\mathbb{R}$ isomorphic to the
completion of an $r$\emph{-standard} $\mathbb{A}^{1}$-\emph{fibered
surface }constructed in $\S$ \ref{par:Standard-Comp}. 
\end{defn}
The next proposition reduces the study of the $\mathbb{R}$-biregular
birational rectifiability of surfaces considered in Theorem \ref{thm:Bir-rectif}
to the case of $r$-standard surfaces:
\begin{prop}
\label{prop:R-standard-reduction} Let $S$ be a smooth affine geometrically
integral surface defined over $\mathbb{R}$ with an $\mathbb{A}^{1}$-fibration
$\pi:S\rightarrow\mathbb{A}_{\mathbb{R}}^{1}$. Suppose that $S(\mathbb{R})\approx\mathbb{R}^{2}$
and that all but at most one fiber of $\pi$ over $\mathbb{R}$-rational
points of $\mathbb{A}_{\mathbb{R}}^{1}$ contain a reduced $\mathbb{R}$-rational
irreducible component. Then there exists an $r$-standard affine $\mathbb{A}^{1}$-fibered
surface $\pi_{0}:S_{0}\rightarrow\mathbb{A}_{\mathbb{R}}^{1}$ and
an $\mathbb{R}$-biregular birational map $\varphi:S\dashrightarrow S_{0}$
such that the following diagram commutes 
\begin{eqnarray*}
S & \stackrel{\varphi}{\dashrightarrow} & S_{0}\\
\pi\downarrow &  & \downarrow\pi_{0}\\
\mathbb{A}_{\mathbb{R}}^{1} & \stackrel{\simeq}{\rightarrow} & \mathbb{A}_{\mathbb{R}}^{1}.
\end{eqnarray*}
 \end{prop}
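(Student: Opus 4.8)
The plan is to exhibit $\varphi$ as a composition of $\sigma$-equivariant birational transformations fibered over $\mathbb{A}_{\mathbb{R}}^1$, each of which is an isomorphism in a neighborhood of the real locus $S(\mathbb{R})$ and hence $\mathbb{R}$-biregular. First I would fix a smooth SNC completion $(V,B)$ of $S$ on which $\pi$ extends to a $\mathbb{P}^1$-fibration $\overline{\pi}:V\rightarrow\mathbb{P}_{\mathbb{R}}^1$ as in $\S$\ref{par:A1-fib-completion}, with distinguished section $\overline{C}_0\subset B$ and fiber $F_\infty=\overline{\pi}^{-1}(\infty)$. Since the affine line has no nontrivial forms over a field of characteristic zero, the generic fiber of $\pi$ is $\mathbb{A}_{\mathbb{R}(t)}^1$; consequently any two $\mathbb{A}^1$-fibered models of this generic fiber are birationally equivalent over $\mathbb{A}_{\mathbb{R}}^1$ through an identification of generic fibers whose indeterminacy is supported over the finitely many closed points of $\mathbb{A}_{\mathbb{R}}^1$ carrying a degenerate fiber. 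The entire difficulty is to arrange these comparison maps so that their centers avoid $S(\mathbb{R})$.

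The structural input I would use is the following description of real loci inside fibers. A fiber over a non-real closed point of $\mathbb{A}_{\mathbb{R}}^1$ has empty real locus; and in a fiber over a real point, every irreducible component defined over $\mathbb{C}$ but not over $\mathbb{R}$ (necessarily isomorphic to $\mathbb{A}_{\mathbb{C}}^1$) has empty real locus and, because fibers of an $\mathbb{A}^1$-fibration on a smooth affine surface are disjoint unions of affine lines, is disjoint from any $\mathbb{R}$-rational component. Hence a $\sigma$-equivariant birational modification of $V$ over $\mathbb{A}_{\mathbb{R}}^1$ whose centers all lie over non-real points, or on the non-$\mathbb{R}$-rational components and the boundary subtrees of real degenerate fibers, restricts to an $\mathbb{R}$-biregular map on $S$. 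With this in hand I would proceed in three stages. In stage (1), over each conjugate pair $q,\overline{q}$ of non-real points the identification with a general fiber is $\sigma$-equivariant with empty real locus, hence $\mathbb{R}$-biregular, and makes the fibration an $\mathbb{A}^1$-bundle there. In stage (2), for each real degenerate fiber carrying a reduced $\mathbb{R}$-rational component $R$ (all but at most one, by hypothesis) the generic-fiber identification is biregular along $R$, since $R$ is reduced, while its only indeterminacy sits on the non-$\mathbb{R}$-rational components disjoint from $R$; this $\mathbb{R}$-biregular modification replaces the fiber by the reduced general fiber $R$. In stage (3) I contract the non-$\mathbb{R}$-rational components of the remaining exceptional real fiber to make it geometrically irreducible, and apply a translation of $\mathbb{A}_{\mathbb{R}}^1$ to bring it over $\{0\}$.

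After these stages the resulting surface $S_0$ carries an $\mathbb{A}^1$-fibration $\pi_0:S_0\rightarrow\mathbb{A}_{\mathbb{R}}^1$ which is a trivial $\mathbb{A}^1$-bundle over $\mathbb{A}_{\mathbb{R}}^1\setminus\{0\}$ — trivial because that base is affine with trivial Picard group, so the bundle is a torsor under a trivial line bundle — and whose fiber over $\{0\}$ is geometrically irreducible of some multiplicity $m\geq1$. Being built from $\mathbb{R}$-biregular maps, $\varphi$ preserves the condition $S(\mathbb{R})\approx\mathbb{R}^2$, so $S_0(\mathbb{R})\approx\mathbb{R}^2$; and since every closed fiber of $\pi_0$ is isomorphic to the affine line when equipped with its reduced structure, $S_0$ is $\mathbb{Q}$-acyclic of negative Kodaira dimension by Proposition \ref{prop:NegKod-FiberStruct}(1). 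Proposition \ref{prop:NegKod-FiberStruct}(2) then forces $m$ to be odd, so $S_0$ is $r$-standard in the sense of Definition \ref{def:Standar-r-pair} and $\varphi$ is the desired map.

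The main obstacle is the $\mathbb{R}$-biregularity bookkeeping in stage (2): one must verify that the generic-fiber identification at a real degenerate fiber genuinely localizes onto the non-$\mathbb{R}$-rational components and the boundary, i.e. that both the comparison map and its inverse have centers with empty real locus, so that $\varphi(\mathbb{R})$ and $\varphi^{-1}(\mathbb{R})$ are mutually inverse diffeomorphisms. This is exactly where the reduced $\mathbb{R}$-rational component is indispensable: it pins the real locus of the fiber to $R(\mathbb{R})\cong\mathbb{R}$ and certifies that $\pi$ is submersive there, confining all surgery to the real-point-free part of $V$. In practice I expect this is cleanest to carry out through explicit $\sigma$-equivariant elementary transformations in the completion $(V,B)$, tracking self-intersections and multiplicities as in $\S$\ref{par:A1-fib-completion}, rather than through the generic fiber; that explicit computation is the part I would write out in full.
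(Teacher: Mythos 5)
Your overall strategy coincides with the paper's: confine every birational operation to loci with empty real locus (fibers over non-real points of $\mathbb{A}^{1}_{\mathbb{R}}$, and non-$\mathbb{R}$-rational components of fibers over real points), then recognize the result as $r$-standard. However, two steps the paper must actually prove are missing from your proposal, and one of them your own argument depends on. The first is the affineness of $S_{0}$: Definition \ref{def:Standar-r-pair} requires an $r$-standard surface to be \emph{affine}, and your closing appeal to Proposition \ref{prop:NegKod-FiberStruct}(2) to force the multiplicity $m$ to be odd also has affineness of $S_{0}$ among its hypotheses. Nothing in your three stages yields this. The paper spends the last part of its proof on exactly this point: it checks that $S_{0}$ contains no complete curve and then produces an effective $\mathbb{Q}$-divisor supported on the boundary $B_{0}$ of the constructed completion which is ample by the Nakai--Moishezon criterion. (Oddness of $m$ can instead be obtained by applying Proposition \ref{prop:NegKod-FiberStruct}(2) to $S$ itself, which \emph{is} affine, since the surgery never touches the real component of the exceptional fiber; but affineness of $S_{0}$ is still needed for the statement.)

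The second issue is that your stages (1)--(2) are precisely what you defer (``that explicit computation is the part I would write out in full''), and your stage (2) both overcomplicates the step and conceals a needed input. The paper's realization of stage (2) is not a modification but an open immersion: the union of all non-$\mathbb{R}$-rational components of the real degenerate fibers is a closed curve with empty real locus in the smooth affine surface $S$, its complement $S'$ is an affine open subset, the inclusion $S'\hookrightarrow S$ is tautologically $\mathbb{R}$-biregular, and the fibers of $\pi|_{S'}$ over the good real points become reduced affine lines; no elementary transformations and no bookkeeping are required. Your ``generic-fiber identification'' gluing would instead require separatedness and algebraicity checks you never make. Moreover, for either version one must know that each real degenerate fiber has \emph{exactly one} $\mathbb{R}$-rational component: your structural paragraph only establishes disjointness of real and non-real components (from fibers being disjoint unions of affine lines), not uniqueness, and if some fiber had two reduced $\mathbb{R}$-rational components your stage (2) would discard real points and fail to be $\mathbb{R}$-biregular. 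Uniqueness is exactly what Proposition \ref{prop:NegKod-FiberStruct}(2), applied to $S$ at the outset as the paper does, supplies. Finally, the elimination of degenerate fibers over non-real points (your stage (1)) is done in the paper by contracting, inside an SNC completion as in \S\ref{par:A1-fib-completion}, curves defined over $\mathbb{R}$ supported on those full fibers until they become smooth fibers; this contraction argument is the clean substitute for your gluing and is where your deferred computation would have to live.
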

\begin{proof}
The strategy is of course to eliminate on the one hand all degenerate
fibers of $\pi$ over $\mathbb{C}$-rational points of $\mathbb{A}_{\mathbb{R}}^{1}$
and on the other hand all non $\mathbb{R}$-rational irreducible components
of the degenerate fibers of $\pi$ over $\mathbb{R}$-rational points.\textbf{
}Since $S(\mathbb{R})\approx\mathbb{R}^{2}$ it follows from 2) in
Proposition \ref{prop:NegKod-FiberStruct} that for every $\mathbb{R}$-rational
point $p\in\mathbb{A}_{\mathbb{R}}^{1}$, the fiber $\pi^{*}(p)$
has the form $mR+R'$, where $R\simeq\mathbb{A}_{\mathbb{R}}^{1}$,
$m\geq1$ is odd, and $R'$ is an effective divisor whose support
is disjoint from $R$ and consists of a disjoint union of affine lines
defined over $\mathbb{C}$. The complement $S'$ in $S$ of all non
$\mathbb{R}$-rational irreducible components of $\pi^{-1}(p)$ where
$p$ runs through the finitely many $\mathbb{R}$-rational points
of $\mathbb{A}_{\mathbb{R}}^{1}$ over which the fiber of $\pi$ is
degenerate is an affine open subset of $S$ on which $\pi$ restricts
to an $\mathbb{A}^{1}$-fibration $\pi':S'\rightarrow\mathbb{A}_{\mathbb{R}}^{1}$
whose fibers over $\mathbb{R}$-rational point of $\mathbb{A}_{\mathbb{R}}^{1}$
are all isomorphic to $\mathbb{A}_{\mathbb{R}}^{1}$ when equipped
with their reduced structure. Furthermore, the hypotheses imply that
there exists at most one $\mathbb{R}$-rational point of $\mathbb{A}_{\mathbb{R}}^{1}$
over which the scheme theoretic fiber of $\pi'$, say $(\pi')^{*}(\{0\})$
is degenerate. By construction, the inclusion $S'\hookrightarrow S$
is an $\mathbb{R}$-biregular birational map. Now let $(V',B')$
be a smooth projective completion of $S'$ defined over $\mathbb{R}$ with geometrically
rational boundary tree $$B'=F'_{\infty}\cup\overline{C}_{0}'\cup\bigcup_{p\in\mathbb{A}_{\mathbb{R}}^{1}}H_{p}'$$
as in $\S$\ref{par:A1-fib-completion}, where $F_{\infty}\simeq\mathbb{P}_{\mathbb{R}}^{1}$
and $\overline{C}_{0}\simeq\mathbb{P}_{\mathbb{R}}^{1}$ are respectively
the fiber over the $\mathbb{R}$-rational closed point $\infty=\mathbb{P}_{\mathbb{R}}^{1}\setminus\mathbb{A}_{\mathbb{R}}^{1}$
and a section of the $\mathbb{P}^{1}$-fibration $\overline{\pi}':V'\rightarrow\mathbb{P}_{\mathbb{R}}^{1}$
extending $\pi'$. Since $(\pi')^{-1}(\{0\})$ is the unique possibly
degenerate fiber of $\pi'$ over an $\mathbb{R}$-rational point of
$\mathbb{A}_{\mathbb{R}}^{1}$, the divisor $\bigcup_{p\in\mathbb{A}_{\mathbb{R}}^{1}}H_{p}$
can be decomposed into the disjoint $\bigcup H_{0}\bigsqcup\bigcup_{q\in\mathbb{A}_{\mathbb{R}}^{1}(\mathbb{C})}H_{q}$
where $H_{0}$ is a possibly empty tree of $\mathbb{R}$-rational
curve supported on $(\overline{\pi}')^{-1}(\{0\})$. For every $\mathbb{C}$-rational
point $q$ of $\mathbb{A}_{\mathbb{R}}^{1}$ for which $H_{q}$ is
not empty, equivalently, for every $\mathbb{C}$-rational point $q$
of $\mathbb{A}_{\mathbb{R}}^{1}$ over which the fiber $(\pi')^{*}(q)$
is degenerate, there exists a sequence of contractions $\beta_{q}:V'\rightarrow V_{q}'$
of curves defined over $\mathbb{R}$ supported on $(\overline{\pi}')^{-1}(q)$
such that $\beta_{q}((\overline{\pi}')^{*}(q))\simeq\mathbb{P}_{\mathbb{C}}^{1}$
is a smooth fiber of the $\mathbb{P}^{1}$-fibration $\overline{\pi}'_{q}=\overline{\pi}'\circ\beta_{q}^{-1}:V_{q}'\rightarrow\mathbb{P}_{\mathbb{R}}^{1}$.
Let $V_0$ be the $\mathbb{P}^{1}$-fibered surface obtained from $V'$ by
performing such sequences of contractions for every $\mathbb{C}$-rational
point $q\in\mathbb{A}_{\mathbb{R}}^{1}$ such that $(\pi')^{*}(q)$
is degenerate, let $\beta:V'\rightarrow V_{0}$ be the corresponding
birational morphism and let $B_{0}$ be the image of $F_{\infty}\cup\overline{C}_{0}\cup H_{0}$
by $\beta$. By construction, $\overline{\pi}_{0}^{*}(\{0\})$ is
the unique degenerate fiber of $\overline{\pi}_{0}:V_{0}\rightarrow\mathbb{P}_{\mathbb{R}}^{1}$
and the restriction of $\beta$ to $S$ induces an $\mathbb{R}$-biregular
birational map $S\dashrightarrow S_{0}=V_{0}\setminus B_{0}$ which
commutes with the $\mathbb{A}^{1}$-fibrations $\pi$ and $\pi_{0}$
induced by $\overline{\pi}$ and $\overline{\pi}_{0}$ respectively.
So $\pi_{0}:S_{0}\rightarrow\mathbb{A}^{1}$ is a standard $\mathbb{A}^{1}$-fibered
surface, provided that $S_{0}$ is affine. First note that $S_{0}$
does not contain any complete algebraic curve. Indeed, otherwise such
an irreducible curve $D$ would not intersect $F_{\infty}$ in $V_{0}$,
whence would be contained in a fiber of $\overline{\pi}_{0}$. Since
every fiber of $\overline{\pi}_{0}$ but $\overline{\pi}_{0}^{-1}(0)$
is smooth and $\overline{C}_{0}$ is a section of $\overline{\pi}_{0}$,
it would follow that $D$ is contained in $\overline{\pi}_{0}^{-1}(0)$.
But since $\beta$ restricts to an isomorphism in a neighborhood of
$(\overline{\pi}')^{-1}(0)$, it would follow that $\beta^{-1}(D)$
is a complete curve in $S'$, which is absurd since $S'$ is affine.
It remains to observe that $B_{0}$ is the support of an effective
$\mathbb{Q}$-divisor $\Delta$ on $V_{0}$ whose intersection with
every irreducible component of $B_{0}$ is strictly positive: since
$F_{0}^{2}=0$ and the dual graph of $B_{0}$ is a tree, such a $\Delta$
is obtained by assigning a positive coefficient $a_{0}\in\mathbb{Q}$
to $F_{0}$ and assigning to the other irreducible components of $B_{0}$
a sequence of positive rational coefficients decreasing rapidly with
respect to the distance to $F_{0}$ in the dual graph of $B_{0}$.
The so constructed $\Delta$ is ample by virtue of the Nakai-Moishezon
criterion and so, $S_{0}$ is affine as desired. 
\end{proof}

\subsubsection{Elementary birational links between $r$-standard pairs. }

Let $(V,B=F_{\infty}\cup C_{0}\cup E)$ be an $r$-standard pair with
non empty tree $E$ and let $\tau=\tau_{0}\circ\cdots\circ\tau_{m}:V\rightarrow\mathbb{F}_{1}$
be the birational morphism constructed in $\S$ \ref{par:Standard-Comp}.
Since the proper base point $p_{0}$ of $\tau^{-1}$ belongs to $E_{-1}\setminus C_{0}$,
the pencil $\mathbb{F}_{1}\dashrightarrow\mathbb{P}_{\mathbb{R}}^{1}$
lifting the projection $\mathbb{P}_{\mathbb{R}}^{2}\dashrightarrow\mathbb{P}_{\mathbb{R}}^{1}$
from $p_{0}$ via the contraction $\mathbb{F}_{1}\rightarrow\mathbb{P}_{\mathbb{R}}^{2}$
of $C_{0}$ lifts to a $\mathbb{P}^{1}$-fibration $\xi_{p_{0}}:V\rightarrow\mathbb{P}_{\mathbb{R}}^{1}$
with a unique degenerate fiber supported by the closure in $V$ of
$(C_{0}\cup E\setminus E_{0})\cup A_{0}$ and having the proper transforms
of $F_{\infty}$ and $E_{0}$ as cross-sections. The restriction of
$\xi_{p_{0}}$ to $S=V\setminus B$ is thus a surjective fibration
$S\rightarrow\mathbb{P}_{\mathbb{R}}^{1}$ defined over $\mathbb{R}$
whose generic fiber is isomorphic to the $1$-punctured affine line
over the function field of $\mathbb{P}_{\mathbb{R}}^{1}$. 
\begin{defn}
\label{def:Elementary-links} With the notation above, we call \emph{elementary
links} the birational transformations of pairs $\eta:(V,B)\dashrightarrow(V^{(1)},B^{(1)})$
defined as follows: : 

a) If $E_{0}^{2}=-2s$ is even, we choose $s$ distinct smooth fibers
$\ell_{i}\simeq\mathbb{P}_{\mathbb{C}}^{1}$ of $\xi_{p_{0}}:V\rightarrow\mathbb{P}_{\mathbb{R}}^{1}$
over $\mathbb{C}$-rational points of $\mathbb{P}_{\mathbb{R}}^{1}$
and we let $\eta':V\dashrightarrow V'$ be the birational map defined
over $\mathbb{R}$ consisting of the blow-up of the $\mathbb{C}$-rational
points $\ell_{i}\cap F_{\infty}$, with respective exceptional divisors
$\tilde{\ell}_{i}\simeq\mathbb{P}_{\mathbb{C}}^{1}$, followed by
the contraction of the proper transforms of the $\ell_{i}$, $i=1,\ldots,s$.
The proper transforms of $E_{0}$ and $F_{\infty}$ in $V'$ have
self-intersections $0$ and $-2s$ respectively, while the self-intersections
of the remaining irreducible components of $B$ are left unchanged. 

\begin{figure}[!htb]
\input{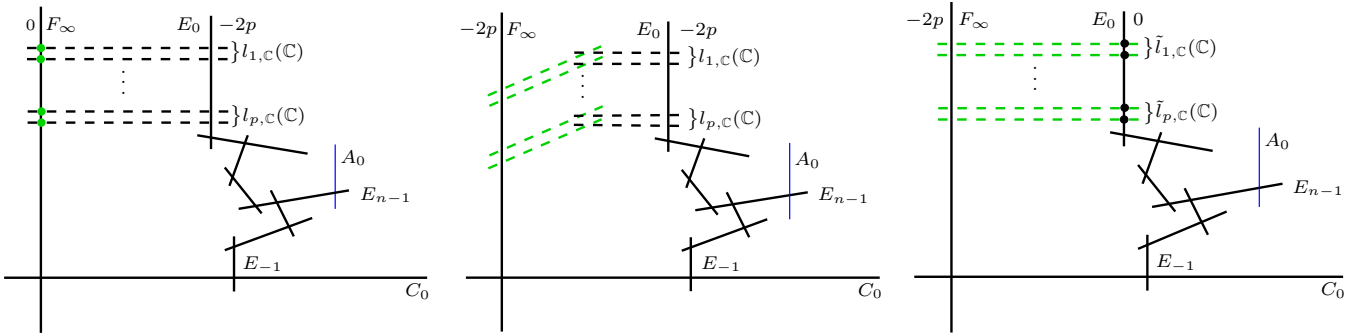} 
\caption{Elementary link: even case}  
\label{fig:Even-rectif}
\end{figure}

Since $V'$ is rational, the complete linear system $|E_{0}|$ generates
a $\mathbb{P}^{1}$-fibration $\overline{\pi}':V'\rightarrow\mathbb{P}_{\mathbb{R}}^{1}$
having the unique irreducible component $E_{i_{1}}$ of $E$ intersecting
$E_{0}$ as a section. Furthermore, since $E_{0}^{2}$ is even, the
multiplicity of $E_{i_{1}}$ as an irreducible component of the degenerate
fiber of $\xi_{p_{0}}$ is even and so, it follows from Lemma \ref{lem:Standard-Comp-Chain}
that $E_{i_{1}}$ is not a branching component of $E$. This implies
that the closure $T$ in $V'$ of $B\setminus(E_{i_{1}}\cup E_{0})$
is contained in a unique degenerate fiber of $\overline{\pi}'$. After
making a sequence of birational transformations consisting on the
one hand of elementary transformations with $\mathbb{R}$-rational
centers (including infinitely near ones) on $E_{0}\setminus E_{i_{1}}$
and contracting the proper transform of $E_{0}$, and on the other
hand of contracting all successive $(-1)$-curves supported on the
proper transform of $T$ starting from that of $C_{0}$, the total
transform of $E_{0}\cup E_{i_{1}}\cup T$ can be re-written in the
form $B^{(1)}=F_{\infty}^{(1)}\cup C_{0}^{(1)}\cup E^{(1)}$, where
$F_{\infty}^{(1)}\simeq\mathbb{P}_{\mathbb{R}}^{1}$ is the last exceptional
divisor of the sequence of elementary transformations, $C_{0}^{(1)}$
is the proper transform of $E_{i_{1}}$ and $E^{(1)}$ is the image
of $T$. We let $V^{(1)}$ be the so constructed smooth projective
surface and we let $\eta:(V,B)\dashrightarrow(V^{(1)},B^{(1)})$ be
the corresponding birational map. By construction, $E^{(1)}$ is either
empty or has strictly less irreducible components than $E$. 

b) If $E_{0}^{2}=-(2s+1)$ is odd, we choose $s+1$ distinct smooth
fibers $\ell_{i}\simeq\mathbb{P}_{\mathbb{C}}^{1}$ of $\xi_{p_{0}}:V\rightarrow\mathbb{P}_{\mathbb{R}}^{1}$
over $\mathbb{C}$-rationals point of $\mathbb{P}_{\mathbb{R}}^{1}$
and we let $\eta':V\dashrightarrow V'$ be the birational map defined
over $\mathbb{R}$ consisting of the blow-up of the $\mathbb{C}$-rational
points $\ell_{i}\cap F_{\infty}$, with respective exceptional divisors
$\tilde{\ell}_{i}\simeq\mathbb{P}_{\mathbb{C}}^{1}$, followed by
the contraction of the proper transforms of the $\ell_{i}$, $i=1,\ldots,s+1$.
The proper transforms of $E_{0}$ and $F_{\infty}$ in $V''$ have
self-intersections $1$ and $-2s-2$ respectively, while the self-intersections
of the remaining irreducible components of $B$ are left unchanged. 

\begin{figure}[!htb]
\input{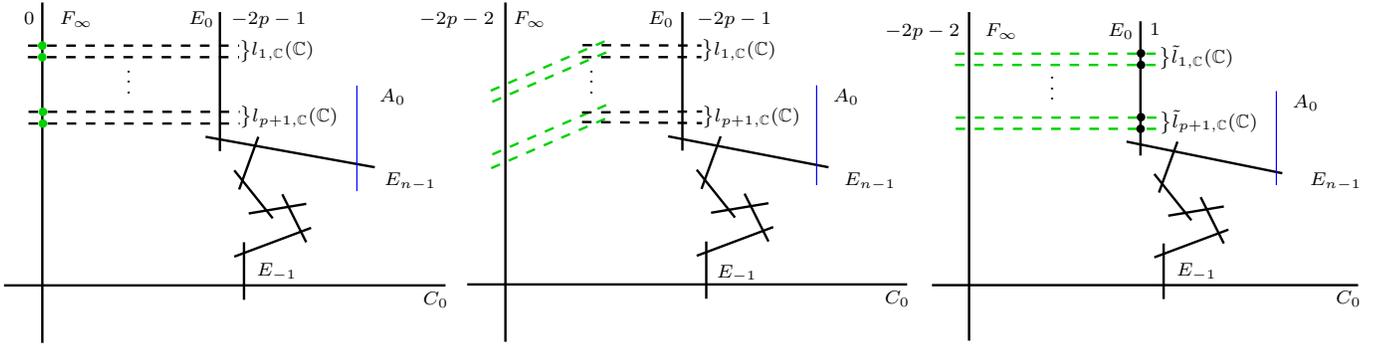} 
\caption{Elementary link: odd case}  
\label{fig:Odd-rectif}
\end{figure}

Let $V''\rightarrow V'$ be the surface obtained by blowing-up the
intersection point of $E_{0}$ with the closure $T$ of the strict
transform of $B\setminus E_{0}$ and let $C_{0}^{(1)}$ be the exceptional
divisor. Then $T$ is contained in a unique degenerate fiber of the
$\mathbb{P}^{1}$-fibration $\overline{\pi}'':V''\rightarrow\mathbb{P}_{\mathbb{R}}^{1}$
generated by the complete linear system $|E_{0}|$ on $V''$ while
$C_{0}^{(1)}$ is a section of this fibration. After contracting all
successive $(-1)$-curves supported in $T$, starting from that of
$C_{0}$ and continuing with that of the successive proper transforms
of the irreducible components of the nonempty chain of $(-2)$-curves
joining $C_{0}$ to the branching component of $E$ contained in the
chain $L$ as in Lemma \ref{lem:Standard-Comp-Chain}, the total transform
of $E_{0}\cup C_{0}^{(1)}\cup T$ can be rewritten in the form $B^{(1)}=F_{\infty}^{(1)}\cup C_{0}^{(1)}\cup E^{(1)}$
where $F_{\infty}^{(1)}$ and $E^{(1)}$ are the proper transforms
of $E_{0}$ and $T$ respectively. We let $V^{(1)}$ be the so constructed
smooth projective surface and we let $\eta:(V,B)\dashrightarrow(V^{(1)},B^{(1)})$
be the corresponding birational map defined over $\mathbb{R}$. The
description given in Lemma \ref{lem:Standard-Comp-Chain} implies
again that if not empty, $E^{(1)}$ has strictly less irreducible
components than $E$. \end{defn}
\begin{prop}
\label{prop:r-standard-stability-links} Let $(V,B=F_{\infty}\cup C_{0}\cup E)$
be an $r$-standard pair such that $E$ is not empty and let $\eta:(V,B)\dashrightarrow(V^{(1)},B^{(1)})$
be an elementary link as in Definition \ref{def:Elementary-links}
above. Then $(V^{(1)},B^{(1)})$ is an $r$-standard pair and the
induced birational map $S=V\setminus B\dashrightarrow S^{(1)}=V^{(1)}\setminus B^{(1)}$
is $\mathbb{R}$-biregular. \end{prop}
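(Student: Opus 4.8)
The plan is to establish the two assertions separately — that $\eta$ restricts to an $\mathbb{R}$-biregular map $S=V\setminus B\dashrightarrow S^{(1)}=V^{(1)}\setminus B^{(1)}$, and that the output pair $(V^{(1)},B^{(1)})$ is again $r$-standard — treating the even case a) and the odd case b) of Definition \ref{def:Elementary-links} in parallel, since the arguments are formally identical.

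For the first assertion I would decompose the elementary link $\eta$ into its constituent operations and check that each one is harmless near the real locus. The initial part $\eta':V\dashrightarrow V'$ consists only of blow-ups of the points $\ell_i\cap F_\infty$ and contractions of the curves $\ell_i$; since each $\ell_i$ is a fibre of $\xi_{p_0}$ over a $\mathbb{C}$-rational (non-real) point of $\mathbb{P}_{\mathbb{R}}^{1}$, both these points and these curves have empty real locus, so the indeterminacy and exceptional loci of $\eta'$ and of its inverse miss $V(\mathbb{R})$ entirely; thus $\eta'$ restricts to an isomorphism on a Zariski-open neighbourhood of $S(\mathbb{R})$ and is in particular $\mathbb{R}$-biregular. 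All the remaining operations — the elementary transformations with centres on $E_0\setminus E_{i_1}$ (resp. the blow-up of $E_0\cap T$), the contraction of the proper transform of $E_0$, and the contractions of successive $(-1)$-curves supported on $T$ — have centres and exceptional loci contained in the successive total transforms of the boundary $B$, hence restrict to isomorphisms on the complement $S$. Composing, $\eta\mid_S$ is $\mathbb{R}$-biregular, so it induces a diffeomorphism $S(\mathbb{R})\stackrel{\sim}{\rightarrow}S^{(1)}(\mathbb{R})$ and $S^{(1)}(\mathbb{R})\approx\mathbb{R}^{2}$.

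For the second assertion, the natural candidate fibration on $V^{(1)}$ is the one descended from $\overline{\pi}'$ (generated by $|E_0|$ on $V'$, resp. on $V''$ in case b)): by construction $F_\infty^{(1)}$ is one of its fibres and $C_0^{(1)}$ is the proper transform (resp. the blow-up) of the section $E_{i_1}$, while the image $E^{(1)}$ of $T$ is confined to the unique degenerate fibre over $\{0\}$, the component meeting $S^{(1)}$ being the image of the old affine fibre $A_0$. I would first carry out the self-intersection bookkeeping through the elementary transformations and $(-1)$-contractions to confirm that $(F_\infty^{(1)})^2=0$, that $(C_0^{(1)})^2=-1$, and that $E^{(1)}$ is a tree of $\mathbb{R}$-rational curves of self-intersection $\leq-2$ — exactly the shape $F_\infty^{(1)}\vartriangleright C_0^{(1)}\vartriangleright E^{(1)}$ of a standard completion of $\S$\ref{par:Standard-Comp}; here Lemma \ref{lem:Standard-Comp-Chain} is what guarantees (even case) that $E_{i_1}$ is not a branching component, so that $T$ indeed lies in a single fibre. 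Affineness of $S^{(1)}$ follows, as in the proof of Proposition \ref{prop:R-standard-reduction}, from the Nakai--Moishezon criterion applied to an effective $\mathbb{Q}$-divisor supported on the tree $B^{(1)}$ with positive coefficients decreasing away from $F_\infty^{(1)}$; and the resulting $\mathbb{A}^{1}$-fibration $\pi^{(1)}:S^{(1)}\rightarrow\mathbb{A}_{\mathbb{R}}^{1}$ is trivial over $\mathbb{A}_{\mathbb{R}}^{1}\setminus\{0\}$ because there it is the complement of the section $C_0^{(1)}$ in a $\mathbb{P}^{1}$-bundle over $\mathbb{G}_{m,\mathbb{R}}$, whose Picard group vanishes. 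Finally, $\pi^{(1),-1}(\{0\})$ is geometrically irreducible, being the trace of the single image of $A_0$; since $S^{(1)}$ is affine and $\mathbb{A}^{1}$-fibred over $\mathbb{A}_{\mathbb{R}}^{1}$ with all closed fibres reduced affine lines, Proposition \ref{prop:NegKod-FiberStruct}(1) gives that $S^{(1)}_{\mathbb{C}}(\mathbb{C})$ is $\mathbb{Q}$-acyclic, and then Proposition \ref{prop:NegKod-FiberStruct}(2), combined with $S^{(1)}(\mathbb{R})\approx\mathbb{R}^{2}$, forces this irreducible fibre to have odd multiplicity. Hence $(V^{(1)},B^{(1)})$ is an $r$-standard pair.

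I expect the main obstacle to be the self-intersection bookkeeping of the second assertion: tracking the self-intersections of $C_0^{(1)}$ and of the components of $E^{(1)}$ through the chain of elementary transformations and $(-1)$-contractions, and verifying in both parities that the final configuration is literally the standard boundary of $\S$\ref{par:Standard-Comp} rather than merely a completion with a tree boundary. By contrast, the parity of the multiplicity of the special fibre — which might otherwise appear to be the delicate point — comes for free from the $\mathbb{R}$-biregularity of $\eta$ together with the characterization in Proposition \ref{prop:NegKod-FiberStruct}(2), and so requires no separate computation.
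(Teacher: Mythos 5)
Your first paragraph (the $\mathbb{R}$-biregularity of $\eta\mid_S$) is correct and is exactly the paper's one-line ``by construction'' argument, so no issue there. The problem is the second assertion, where your route genuinely diverges from the paper's. The paper does \emph{not} try to verify the geometry of the new fibration directly: it first proves that $S^{(1)}_{\mathbb{C}}(\mathbb{C})$ is $\mathbb{Q}$-acyclic by the arrangement machinery (Lemma \ref{lem:plane_arrangement_top_charac} and Remark \ref{Rk:Arrangement-top-carac-gen}), realizing $V$ and $V'$ over a second Hirzebruch surface with $E_0$ and $F_\infty$ as sections and $E_{-1}$ as a fibre; the relevant homomorphism $\varphi'$ is then multiplication by $f_\infty+(2r-1)e_{-1}-e_0=\pm m+2re_{-1}$, which is nonzero \emph{because $m$ is odd}. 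Affineness, uniqueness of the degenerate fibre, and odd multiplicity of the new fibre are only then read off from Proposition \ref{prop:NegKod-FiberStruct}, as in your last step. You instead want to establish the fibration geometry first and deduce $\mathbb{Q}$-acyclicity from Proposition \ref{prop:NegKod-FiberStruct}(1). That can be made to work, but the two facts it rests on --- that $\overline{\pi}^{(1)}$ has no degenerate fibres away from $0$, and that its fibre over $0$ meets $S^{(1)}$ precisely in the image of $A_0$, irreducible with reduction $\mathbb{A}^1$ --- are the actual content of the proposition, and you assert them rather than prove them. Proving them needs (i) that $A_0$ lies in the fibre of $\overline{\pi}'$ containing $T$, and (ii) a Picard-rank count: $\rho(V'_{\mathbb{C}})=n+3$ forces that fibre to be exactly $T\cup A_0$ and every other fibre to be irreducible. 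Point (i) is where the hypothesis you declare superfluous enters: in case a) one must exclude $E_{i_1}=E_{n-1}$, i.e. the configuration in which $A_0$ meets the new section. Since $A_0$ is attached to the boundary only along $E_{n-1}$, its multiplicity is that of $E_{n-1}$, namely $m$, which is odd; whereas the multiplicity of $E_{i_1}$ in $\overline{\pi}^{*}(\{0\})$ equals $-E_0^2$, which is even in case a). Had that configuration occurred, the whole fibre of $\overline{\pi}^{(1)}$ over $0$ would lie in $B^{(1)}$, so $\pi^{(1)}$ would have \emph{empty} fibre over $0$, and there would be a reducible degenerate fibre elsewhere containing a complete curve of $S^{(1)}$: the output would be neither affine nor $\mathbb{Q}$-acyclic, let alone $r$-standard.

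Consequently your closing claim --- that the parity of the multiplicity ``comes for free'' and ``requires no separate computation'' --- is exactly backwards. Oddness of the \emph{old} multiplicity $m$ is what makes the proposition true: for even $m$ an elementary link can genuinely destroy $\mathbb{Q}$-acyclicity (in the paper's computation this is the degenerate case $\pm m+2re_{-1}=0$), so a proof that never invokes oddness of $m$ before the final appeal to Proposition \ref{prop:NegKod-FiberStruct}(2) cannot be complete. There is also a circularity in your ordering: the Nakai--Moishezon argument for affineness, as in Proposition \ref{prop:R-standard-reduction}, presupposes that $S^{(1)}$ contains no complete curves, and that fact is itself deduced from the fibre structure (every fibre of $\overline{\pi}^{(1)}$ except the one over $0$ irreducible, meeting the section), which in your outline is only established \emph{after} affineness. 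The repair is to reorder: run the parity argument and the Picard count first to pin down the fibres of $\overline{\pi}'$, deduce that $S^{(1)}$ has no complete curves, then affineness, and only then invoke Propositions \ref{prop:NegKod-FiberStruct}(1) and (2).
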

\begin{proof}
By construction, the birational map $S\dashrightarrow S^{(1)}$ induces
a diffeomorphism $S(\mathbb{R})\stackrel{\simeq}{\rightarrow}S^{(1)}(\mathbb{R})$.
So it is enough to show that $S_{\mathbb{C}}^{(1)}(\mathbb{C})$ is
$\mathbb{Q}$-acyclic. Indeed, if so, then $S^{(1)}$ is affine whence
in particular does not contain any complete curve. As a consequence,
the $\mathbb{A}^{1}$-fibration $\pi^{(1)}:S^{(1)}\rightarrow\mathbb{A}_{\mathbb{R}}^{1}$
induced by the restriction of the $\mathbb{P}^{1}$-fibration $\overline{\pi}^{(1)}:V^{(1)}\rightarrow\mathbb{P}_{\mathbb{R}}^{1}$
defined by the complete linear system $|F_{\infty}^{(1)}|$ has at
most one degenerate fiber, whose closure is contained in the fiber
of $\overline{\pi}^{(1)}$ over the $\mathbb{R}$-rational point $\overline{\pi}^{(1)}(E^{(1)})\in\mathbb{A}_{\mathbb{R}}^{1}$.
Together with Proposition \ref{prop:NegKod-FiberStruct}, the $\mathbb{Q}$-acyclicity
of $S_{\mathbb{C}}^{(1)}(\mathbb{C})$ and the fact that $S^{(1)}(\mathbb{R})\approx\mathbb{R}^{2}$
imply that the unique possible degenerate fiber of $\pi^{(1)}$ is
isomorphic to $\mathbb{A}_{\mathbb{R}}^{1}$ when equipped with its
reduced structure and has odd multiplicity. So $\pi^{(1)}:S^{(1)}\rightarrow\mathbb{A}_{\mathbb{R}}^{1}$
is an $r$-standard $\mathbb{A}^{1}$-fibered surface. 

Note that since $\left(V,B\right)$ is an $r$-standard pair, $S_{\mathbb{C}}(\mathbb{C})$
is $\mathbb{Q}$-acyclic by virtue of Proposition \ref{prop:NegKod-FiberStruct}.
Furthermore, it follows from the proof of this proposition that $H_{1}(S_{\mathbb{C}}(\mathbb{C});\mathbb{Z})\simeq\mathbb{Z}/m\mathbb{Z}$,
where $m$ is the multiplicity of the unique degenerate fiber of the
$\mathbb{A}^{1}$-fibration $\pi:S\rightarrow\mathbb{A}_{\mathbb{R}}^{1}$.
By the description given in $\S$ \ref{par:Standard-Comp} and Figure
\ref{fig:rectif-0}, we may view the pair $(V,B)$ as being constructed
from the surface $\tau_{0}:V_{1}\rightarrow\mathbb{F}_{1}$ obtained
by blowing-up the point $p_{0}\in E_{-1}\setminus C_{0}$ by blowing-up
further a sequence of $\mathbb{R}$-rational points supported on the
successive total transforms on $E_{-1}$. Contracting $C_{0}$ from
$V_{1}$, we may also view $(V,B)$ as being obtained from another
Hirzebruch surface $\rho_{1}':\mathbb{F}_{1}\rightarrow\mathbb{P}_{\mathbb{R}}^{1}$
having $E_{0}$ and $F_{\infty}$ as sections with self-intersections
$-1$ and $+1$ respectively by a sequence $\alpha:V\rightarrow\mathbb{F}_{1}$
of blow-ups of $\mathbb{R}$-rational points in in such a way that
the $\mathbb{P}^{1}$-fibration $\xi_{p_{0}}:V\rightarrow\mathbb{P}_{\mathbb{R}}^{1}$
coincides with $\rho_{1}'\circ\tau'$. The image $D=\alpha{}_{*}(B)$
of $B$ consists of the union of $E_{0}$, $F_{\infty}$ and $E_{-1}$,
which is a fiber of $\tau'$. With the notation of \ref{par:curve_arrangement_setup},
the kernel $R$ of the surjective map $j_{\mathbb{C}}:\mathbb{Z}\langle D_{\mathbb{C}}\rangle\rightarrow\mathrm{Cl}(\mathbb{F}_{1})$
is generated by $F_{\infty}-E_{0}-E_{-1}$ while $\mathbb{Z}\langle\mathcal{E}_{0}\rangle$
is the free abelian group generated by $A_{0,\mathbb{C}}$. Letting
$f_{\infty}$, $e_{0}$ and $e_{-1}$ be the coefficients of $A_{0}$
in the total transforms in $V$ of $F_{\infty}$, $E_{0}$ and $E_{-1}$
respectively, the homomorphism $\varphi:R\rightarrow\mathbb{Z}\langle\mathcal{E}_{0}\rangle$
with respect to the chosen bases is simply the multiplication by $f_{\infty}-e_{0}-e_{-1}$,
and since the diagram chasing in the proof of Lemma \ref{lem:plane_arrangement_top_charac} (see also Remark~\ref{Rk:Arrangement-top-carac-gen})
provides an isomorphism $H_{1}(S_{\mathbb{C}}(\mathbb{C});\mathbb{Z})\simeq\mathbb{Z}\langle\mathcal{E}_{0}\rangle/\mathrm{Im}\varphi$,
we have $f_{\infty}-e_{0}-e_{-1}=\pm m$. 

On the other hand, with the notation of Definition \ref{def:Elementary-links},
$S^{(1)}=V^{(1)}\setminus B^{(1)}$ is isomorphic to the complement
in the projective surface $V'$ of the proper transform $B'$ of $B$.
Since by construction $V'$  is obtained from $V$ by performing $r=-E_{0}^{2}$
elementary birational transformations along $\mathbb{C}$-rational
smooth fibers of $\xi_{p_{0}}:V\rightarrow\mathbb{P}_{\mathbb{R}}^{1}$
with centers on $F_{\infty}$, we have a commutative diagram 
\begin{eqnarray*}
V & \stackrel{\eta'}{\dashrightarrow} & V'\\
\alpha\downarrow &  & \downarrow\alpha'\\
\mathbb{F}_{1} & \dashrightarrow & \mathbb{F}_{1-2r}
\end{eqnarray*}
where $\mathbb{F}_{1}\dashrightarrow\mathbb{F}_{1-2r}$ consists of
$r$ elementary birational transformations along $\mathbb{C}$-rational
smooth fibers of $\rho_{1}':\mathbb{F}_{1}\rightarrow\mathbb{P}_{\mathbb{R}}^{1}$
with centers on $F_{\infty}$, $\eta'$ restricts to an isomorphism
in a open neighborhood of $B\setminus(E_{0}\cup F_{\infty})$ and
$\alpha':V'\rightarrow\mathbb{F}_{1-2r}$ is a sequence of blow-ups
of $\mathbb{R}$-rational points. It follows in particular that the
coefficients of $A_{0}$ in the total transforms in $V'$ of $F_{\infty}$,
$E_{0}$ and $E_{-1}$ are again equal to $f_{\infty}$, $e_{0}$
and $e_{-1}$ respectively. On the other hand, the proper transforms
of $E_{-1}$, $E_{0}$ and $F_{\infty}$ in $\mathbb{F}_{1-2r}$ are
respectively a fiber of the $\mathbb{P}^{1}$-bundle structure and
a pair of sections with self-intersections $-1+2r$ and $1-2r$. Thus
$E_{0}$ is linearly equivalent in $\mathbb{F}_{1-2r}$ to $F_{\infty}+(2r-1)E_{-1}$
and, letting $D'=\alpha'_{*}B'$, a basis of the kernel $R'$ of the
surjective map $j'_{\mathbb{C}}:\mathbb{Z}\langle D'_{\mathbb{C}}\rangle\rightarrow\mathrm{Cl}(\mathbb{F}_{1})$
is generated by $F_{\infty}+(2r-1)E_{-1}-E_{0}$. As a consequence,
the homomorphism $\varphi':R'\rightarrow\mathbb{Z}\langle\mathcal{E}_{0}\rangle$
coincides with the multiplication by $f_{\infty}+(2r-1)e_{-1}-e_{0}$
and we deduce from the generalization of Lemma \ref{lem:plane_arrangement_top_charac}
given in Remark \ref{Rk:Arrangement-top-carac-gen}\textbf{ }that
$S_{\mathbb{C}}^{(1)}(\mathbb{C})$ is $\mathbb{Q}$-acyclic unless
$f_{\infty}+(2r-1)e_{-1}-e_{0}=0$. But this second possibility never
occurs because $f_{\infty}-e_{0}-e_{-1}=\pm m$ is odd by hypothesis.
This completes the proof. 
\end{proof}

\subsubsection{\label{sub:Proof-of-Theorem} Proof of Theorem \ref{thm:Bir-rectif}}

\indent\newline\indent By hypothesis $S$ is a smooth affine geometrically
integral surface defined over $\mathbb{R}$, with $S(\mathbb{R})\approx\mathbb{R}^{2}$
and equipped with an $\mathbb{A}^{1}$-fibration $\pi:S\rightarrow\mathbb{A}_{\mathbb{R}}^{1}$
such that all but at most one fibers of $\pi$ over $\mathbb{R}$-rational
points of $\mathbb{A}_{\mathbb{R}}^{1}$ contain a reduced $\mathbb{R}$-rational
irreducible component. By virtue of Proposition \ref{prop:R-standard-reduction},
there exists an $\mathbb{R}$-biregular birational map $\varphi_{0}:S\dashrightarrow S^{(0)}$
onto an $r$-standard surface $\pi^{(0)}:S^{(0)}\dashrightarrow\mathbb{A}_{\mathbb{R}}^{1}$
such that $\pi=\pi^{(0)}\circ\varphi_{0}$. Letting $(V^{(0)},B^{(0)}=F_{\infty}^{(0)}\cup C_{0}^{(0)}\cup E^{(0)})$
be a smooth completion of $S^{(0)}$ defined over $\mathbb{R}$ as
in $\S$ \ref{par:Standard-Comp}, we have the following alternative:
either $E^{(0)}$ is empty and then $V^{(0)}\simeq\mathbb{F}_{1}$
and $S^{(0)}=V^{(0)}\setminus B^{(0)}\simeq\mathbb{A}_{\mathbb{R}}^{2}$
or, by virtue of Proposition \ref{prop:r-standard-stability-links},
there exists an elementary link $\eta_{0}:(V^{(0)},B^{(0)})\dashrightarrow(V^{(1)},B^{(1)}=F_{\infty}^{(1)}\cup C_{0}^{(1)}\cup E^{(1)})$
to an $r$-standard pair $(V^{(1)},B^{(1)})$ restricting to an $\mathbb{R}$-biregular
birational map $\eta_{0}:S^{(0)}\dashrightarrow S^{(1)}=V^{(1)}\setminus B^{(1)}$
between $r$-standard surfaces. Since $E^{(1)}$ is either empty or
has strictly less irreducible component then $E^{(0)}$ we conclude
by induction that there exists a finite sequence of elementary links
\[
(V^{(0)},B^{(0)})\stackrel{\eta_{0}}{\dashrightarrow}(V^{(1)},B^{(1)})\stackrel{\eta_{1}}{\dashrightarrow}\cdots\stackrel{\eta_{m-1}}{\dashrightarrow}(V^{(m)},B^{(m)})\stackrel{\eta_{m}}{\dashrightarrow}(V^{(m+1)},B^{(m+1)})
\]
terminating with an $r$-standard pair $(V^{(m+1)},B^{(m+1)})$ for
which $E^{(m+1)}$ is empty and such that the composition $\eta_{m}\circ\cdots\circ\eta_{0}:S^{(0)}\dashrightarrow S^{(m+1)}=V^{(m+1)}\setminus B^{(m+1)}\simeq\mathbb{A}_{\mathbb{R}}^{2}$
is an $\mathbb{R}$-biregular birational map. This completes the proof
of Theorem \ref{thm:Bir-rectif}. 
\begin{example}
Let $D\subset\mathbb{P}_{\mathbb{R}}^{2}$ be a geometrically integral
$\mathbb{R}$-rational curve of degree $d=2n+1\geq1$ with a unique
singular point $p$ of multiplicity $2n$, such that $D_{\mathbb{C}}$
has a unique analytic branch at $p$, and let $S=\mathbb{P}_{\mathbb{R}}^{2}\setminus D$.
Then $D(\mathbb{R})$ is connected and since $d$ is odd, $S(\mathbb{R})$
is connected, hence homeomorphic to $\mathbb{R}^{2}$. On the other
hand, $S_{\mathbb{C}}(\mathbb{C})$ is $\mathbb{Q}$-acyclic with
$H_{1}(S_{\mathbb{C}}(\mathbb{C});\mathbb{Z})\simeq\mathrm{Cl}(S_{\mathbb{C}})\simeq\mathbb{Z}_{d}$
and so, $S$ is not isomorphic to $\mathbb{A}_{\mathbb{R}}^{2}$ as
a scheme over $\mathbb{R}$. But it follows from Theorem \ref{thm:Bir-rectif}
that $S$ is $\mathbb{R}$-biregularly birationally isomorphic to $\mathbb{A}_{\mathbb{R}}^{2}$.
Indeed, the pencil $\mathbb{P}_{\mathbb{R}}^{2}\dashrightarrow\mathbb{P}_{\mathbb{R}}^{2}$
generated by $D$ and $d$ times its tangent line $L\simeq\mathbb{P}_{\mathbb{R}}^{1}$
at $p$ restricts on $S$ to an $\mathbb{A}^{1}$-fibration $\pi:S\rightarrow\mathbb{A}_{\mathbb{R}}^{1}$
with a unique degenerate fiber isomorphic to $\mathbb{A}_{\mathbb{R}}^{1}=L\cap S$,
of multiplicity $d$. 
\end{example}

\section{Complements}

In the note \cite{DM16}, we exhibit examples of real fake planes of every Kodaira dimension $\kappa\in \{-\infty,0,1,2\}$ which are birationally diffeomorphic to $\mathbb{R}^{2}$. 

\subsection{\label{sub:Rectif-kod0}Exceptional $\mathbb{Q}$-homology euclidean
planes of Kodaira dimension $0$}

By virtue of \cite[(8.64)]{Fu82} (see also \cite[Lemma 4.4.2]{MiyBook}),
a smooth affine complex $\mathbb{Q}$-acyclic surface of Kodaira dimension
$0$ is either $\mathbb{A}_{*}^{1}$-ruled over a base curve isomorphic
to $\mathbb{A}_{\mathbb{C}}^{1}$ or $\mathbb{P}_{\mathbb{C}}^{1}$,
or is isomorphic to one of the so-called exceptional surfaces $Y(3,3,3)$,
$Y(2,4,4)$ or $Y(2,3,6)$. Hereafter, we construct all real models
of these exceptional surfaces and characterize $\mathbb{Q}$-acyclic
euclidean planes among them. We show in particular that $Y(2,4,4)$
admits two real forms of very different nature: one whose real locus
is not diffeomorphic to $\mathbb{R}^{2}$ and a second one which is
$\mathbb{R}$-biregularly birationally equivalent to $\mathbb{A}_{\mathbb{R}}^{2}$.
This illustrates the fact that neither the topology of the complex
model nor the Kodaira dimension are invariant under $\mathbb{R}$-biregular
birational equivalence.

\subsubsection{Real model of $Y(3,3,3)$ }

Let $D$ be the union of four general lines $\ell_{i}\simeq\mathbb{P}_{\mathbb{R}}^{1}$,
$i=0,1,2,3$ in $\mathbb{P}_{\mathbb{R}}^{2}$ and let $\tau\colon V\rightarrow\mathbb{P}_{\mathbb{R}}^{2}$
be the projective surface obtained by first blowing-up the points
$p_{ij}=\ell_{i}\cap\ell_{j}$ with exceptional divisors $E_{ij}$,
$i,j=1,2,3$, $i\neq j$ and then blowing-up the points $\ell_{1}\cap E_{12}$,
$\ell_{2}\cap E_{23}$ and $\ell_{3}\cap E_{1,3}$ with respective
exceptional divisors $E_{1}$, $E_{2}$ and $E_{3}$. We let $B=\ell_{0}\cup\ell_{1}\cup\ell_{2}\cup\ell_{3}\cup E_{12}\cup E_{23}\cup E_{13}$.
The dual graphs of $D$, its total transform $\tau^{-1}(D)$ in $V$
and $B$ are depicted in Figure \ref{fig:logkod0-Y333}. 

\begin{figure}[!htb]
\input{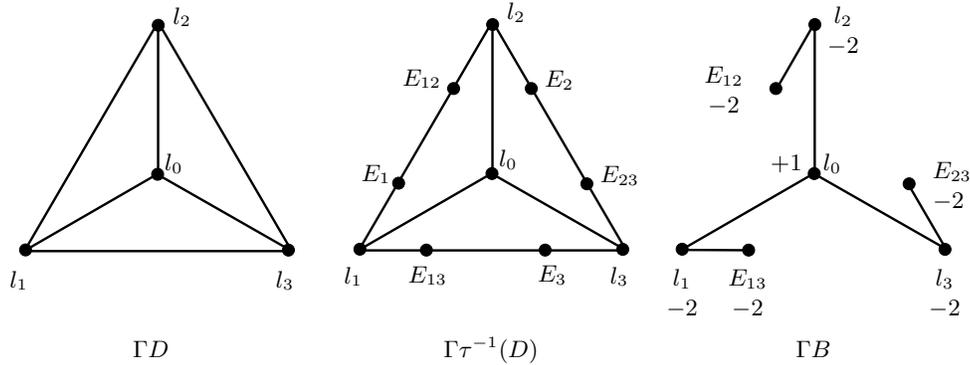} 
\caption{Construction of $Y(3,3,3)$}  
\label{fig:logkod0-Y333}
\end{figure}We let $Y(3,3,3)=V\setminus B$. With the notation of $\S$ \ref{par:curve_arrangement_setup},
the kernel $R$ of the surjective map $j_{\mathbb{C}}:\mathbb{Z}\langle D_{\mathbb{C}}\rangle\rightarrow\mathrm{Cl}(\mathbb{P}_{\mathbb{C}}^{2})$
is generated by the classes $\ell_{i,\mathbb{C}}-\ell_{0,\mathbb{C}}$,
$i=1,2,3$ while $\mathbb{Z}\langle\mathcal{E}_{0}\rangle$ is the
free abelian group generated by $E_{1,\mathbb{C}}$, $E_{2,\mathbb{C}}$
and $E_{3,\mathbb{C}}$. With this choice of basis, the induced homomorphism
$\varphi:R\rightarrow\mathbb{Z}\langle\mathcal{E}_{0}\rangle$ is
represented by the matrix 
\[
A=\left(\begin{array}{ccc}
2 & 1 & 0\\
0 & 2 & 1\\
1 & 0 & 2
\end{array}\right)
\]
which has determinant $\det A=9$. So by virtue of Lemma \ref{lem:plane_arrangement_top_charac},
$Y(3,3,3)_{\mathbb{C}}$ is $\mathbb{Q}$-acyclic with $H_{2}(Y(3,3,3)_{\mathbb{C}};\mathbb{Z})=0$
and $H_{1}(Y(3,3,3)_{\mathbb{C}};\mathbb{Z})\simeq\mathbb{Z}_{9}$.
Furthermore, since $j:\mathbb{Z}\langle D\rangle\rightarrow\mathrm{Cl}(\mathbb{P}_{\mathbb{R}}^{2})$
is surjective and $V$ is obtained from $\mathbb{P}_{\mathbb{R}}^{2}$
be blowing-up $\mathbb{R}$-rational points only, we deduce from c)
in the same Lemma that $Y(3,3,3)(\mathbb{R})\approx\mathbb{R}^{2}$. 
\begin{prop}
Let $S$ be a smooth surface defined over $\mathbb{R}$ such that
$S(\mathbb{R})\approx\mathbb{R}^{2}$ and $S_{\mathbb{C}}\simeq Y(3,3,3)_{\mathbb{C}}$.
Then $S$ is isomorphic to $Y(3,3,3)$ as a scheme over $\mathbb{R}$. \end{prop}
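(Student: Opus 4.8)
The plan is to view a real form $S$ as the complex surface $Y(3,3,3)_{\mathbb C}$ equipped with a real structure (an anti-holomorphic involution) $\sigma$, and to recall that two such forms are isomorphic as schemes over $\mathbb R$ if and only if their real structures are conjugate by a $\mathbb C$-automorphism of $Y(3,3,3)_{\mathbb C}$. Since $Y(3,3,3)_{\mathbb C}$ is $\mathbb Q$-acyclic of Kodaira dimension $0$, its SNC-minimal smooth completion $(V_{\mathbb C},B_{\mathbb C})$ is unique up to isomorphism, its boundary being the ``spider'' whose dual graph has the central curve $\ell_0$ of self-intersection $+1$ joined to three identical arms $\ell_i\,(-2)\,\text{---}\,E_{\ast}\,(-2)$. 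First I would argue that $\sigma$ extends to $(V_{\mathbb C},B_{\mathbb C})$ and, after contracting the exceptional curves, descends to an anti-holomorphic involution $\tilde\sigma$ of the pair $(\mathbb P^2_{\mathbb C},D_{\mathbb C})$ preserving the four-line arrangement $D=\ell_0\cup\ell_1\cup\ell_2\cup\ell_3$ together with all the blow-up centres used in $\S$\,\ref{par:curve_arrangement_setup}. Thus it suffices to show that every such $\sigma$ is conjugate to the standard real structure $\sigma_0$ of $Y(3,3,3)$ constructed over $\mathbb R$.

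The crux is to control the permutation of the components of $B_{\mathbb C}$ induced by $\sigma$. Although the weighted dual graph of $B$ abstractly admits the symmetric group $S_3$ permuting the three arms, the genuine (anti-)automorphisms of the completion must respect the \emph{chiral} blow-up data recorded by the curves $E_1,E_2,E_3$ deleted from $B$: the second-round centres $\ell_1\cap E_{12}$, $\ell_2\cap E_{23}$, $\ell_3\cap E_{13}$ are permuted by the cyclic rotation $1\to 2\to 3\to 1$ but \emph{not} by any transposition, which is exactly the asymmetry encoded in the matrix $A$. Hence the group of symmetries realized by honest automorphisms of $Y(3,3,3)_{\mathbb C}$ is cyclic of order $3$. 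Composing $\sigma$ with $\sigma_0$ (which fixes every boundary component) produces a holomorphic automorphism whose induced permutation of components therefore lies in $\mathbb Z/3$; since $\sigma^2=\mathrm{id}$, this permutation is an involution of $\mathbb Z/3$, hence trivial. Consequently every irreducible component of $B_{\mathbb C}$ is $\sigma$-invariant, and so are all the lines $\ell_i$ and the points $p_{ij}=\ell_i\cap\ell_j$. I expect this symmetry analysis—rigorously pinning down that the realizable symmetry group is exactly $\mathbb Z/3$ by bookkeeping the full total-transform graph $\Gamma\tau^{-1}(D)$ rather than $\Gamma B$ alone—to be the main obstacle.

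Finally, the descended involution $\tilde\sigma$ is an anti-holomorphic involution of $\mathbb P^2_{\mathbb C}$ fixing each of the four lines. As $\mathbb P^2$ admits a unique real form, $\tilde\sigma$ is conjugate by some $g\in\mathrm{PGL}_3(\mathbb C)$ to the standard conjugation; under this conjugation the $g(\ell_i)$ become genuine $\mathbb R$-rational lines and the centres $p_{ij}$ and $\ell_0\cap\ell_i$ become $\mathbb R$-rational points. Since four real lines in general position in $\mathbb P^2_{\mathbb R}$ are $\mathrm{PGL}_3(\mathbb R)$-equivalent and the (cyclic) blow-up data agrees with that of the standard construction, the whole blow-up procedure of $\S$\,\ref{par:curve_arrangement_setup} descends over $\mathbb R$ to the one defining $Y(3,3,3)$. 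Therefore $S\simeq Y(3,3,3)$ over $\mathbb R$; in particular the condition $S(\mathbb R)\approx\mathbb R^2$ is recovered automatically from the real-locus computation already made via Lemma \ref{lem:plane_arrangement_top_charac} and Proposition \ref{prop:Galois_real_locus_charac}, confirming that $Y(3,3,3)_{\mathbb C}$ possesses a single real form.
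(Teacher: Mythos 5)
Your proposal reaches the right conclusion but by a genuinely different route from the paper. The paper's entire proof is cohomological and takes two lines: it quotes the known fact that $\mathrm{Aut}(Y(3,3,3)_{\mathbb{C}})\simeq\mathbb{Z}_{3}$ and invokes the classification of real forms by Galois cohomology, $H^{1}(\mathbb{Z}_{2},\mathrm{Aut}(Y(3,3,3)_{\mathbb{C}}))\simeq H^{1}(\mathbb{Z}_{2},\mathbb{Z}_{3})=0$, the vanishing holding simply because $\mathbb{Z}_{3}$ has odd order. You instead re-derive the relevant rigidity by hand: extend a real structure $\sigma$ to the SNC-minimal completion, show the induced permutation of boundary components is forced into the cyclic group $\mathbb{Z}/3$ by the chiral position of the interior curves $E_{1},E_{2},E_{3}$, conclude from $\sigma^{2}=\mathrm{id}$ that the permutation is trivial, then descend to $\mathbb{P}_{\mathbb{C}}^{2}$, use the uniqueness of the real form of $\mathbb{P}^{2}_{\mathbb{C}}$ and the transitivity of $\mathrm{PGL}_{3}(\mathbb{R})$ on quadruples of general real lines. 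Your step ``an involution inside $\mathbb{Z}/3$ is trivial'' is exactly the geometric incarnation of the paper's vanishing $H^{1}(\mathbb{Z}_{2},\mathbb{Z}_{3})=0$. What your route buys is independence from the literature on $\mathrm{Aut}(Y(3,3,3)_{\mathbb{C}})$ and an explicit real isomorphism (it also shows, as does the paper's proof, that the hypothesis $S(\mathbb{R})\approx\mathbb{R}^{2}$ is superfluous); the cost is that you must actually prove the rigidity statements that the paper outsources.

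Two of those statements are asserted rather than proved, and the first with an inadequate justification. (i) The uniqueness of the SNC-minimal completion, which is what lets $\sigma$ and all automorphisms extend to $(V_{\mathbb{C}},B_{\mathbb{C}})$, does not follow from ``$\mathbb{Q}$-acyclic of Kodaira dimension $0$'' by any soft argument: Kodaira dimension $0$ alone certainly does not suffice (the surface $\mathbb{C}^{*}\times\mathbb{C}^{*}$ has $\kappa=0$ and admits non-isomorphic SNC-minimal completions, e.g.\ $\mathbb{P}^{2}$ with a triangle of lines and $\mathbb{P}^{1}\times\mathbb{P}^{1}$ with a $4$-cycle). The rigidity you need uses that here the boundary is a tree which is not a chain and that $\kappa\geq0$ excludes $\mathbb{A}^{1}$-fibrations, so that no elementary link between distinct minimal completions can exist; proving this is of essentially the same depth as the computation $\mathrm{Aut}(Y(3,3,3)_{\mathbb{C}})\simeq\mathbb{Z}_{3}$ that the paper cites, so your proof is not shorter, only self-contained. (ii) The chirality argument requires that every automorphism or anti-automorphism of the pair $(V_{\mathbb{C}},B_{\mathbb{C}})$ preserves the set $\{E_{1},E_{2},E_{3}\}$; this is true but needs an argument, for instance: since $\mathbb{Z}\langle B_{\mathbb{C}}\rangle$ has finite index in $\mathrm{Cl}(V_{\mathbb{C}})$, the class of $E_{i}$ is determined by its intersection numbers with the boundary components, and two distinct irreducible curves cannot share a class of negative self-intersection, so once the boundary components are fixed the $E_{i}$ are fixed as well; in the same vein, the admissible permutations of the three arms are those commuting with the $3$-cycle pattern recording which arm-middle is joined to which arm-end, i.e.\ the centralizer of a $3$-cycle in $S_{3}$, which is $\mathbb{Z}/3$. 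With these two points supplied, your argument is complete and correct.
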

\begin{proof}
Since the automorphism group of $Y(3,3,3)_{\mathbb{C}}$ is isomorphic
to $\mathbb{Z}_{3}$, it follows that $Y(3,3,3)$ has no nontrivial
$\mathbb{R}$-form: indeed isomorphy classes of $\mathbb{R}$-forms
of $Y(3,3,3)$ are in one-to-one correspondence with elements of the
cohomology group $H^{1}(\mathbb{Z}_{2},\mathrm{Aut}(Y(3,3,3)_{\mathbb{C}}))\simeq H^{1}(\mathbb{Z}_{2},\mathbb{Z}_{3})=0$,
as every element in $\mathbb{Z}_{3}$ is a multiple of $2$. \end{proof}
\begin{question}
Is $Y(3,3,3)$ $\mathbb{R}$-biregularly birationally equivalent to
$\mathbb{A}_{\mathbb{R}}^{2}$ ? 
\end{question}

\subsubsection{Real forms of $Y(2,4,4)$ }

Starting from $\mathbb{P}_{\mathbb{R}}^{1}\times\mathbb{P}_{\mathbb{R}}^{1}$,
we can construct two non isomorphic real forms $Y_{r}(2,4,4)$ and
$Y_{c}(2,4,4)$ of the same complex surface as follows:

a) The surface $Y_{r}(2,4,4)$. We let $D_{r}$ be the union of three
distinct $\mathbb{R}$-rational fibers $\ell_{j}\simeq\mathbb{P}_{\mathbb{R}}^{1}$,
$j=1,2,3$, of the first projection and of three distinct $\mathbb{R}$-rational
fibers $M_{i}\simeq\mathbb{P}_{\mathbb{R}}^{1}$, $i=1,2,3$, of the
second projection. We let $\pi_{r}:V_{r}\rightarrow\mathbb{P}_{\mathbb{R}}^{1}\times\mathbb{P}_{\mathbb{R}}^{1}$
be the surface obtained by first blowing-up the $\mathbb{R}$-rational
points $p_{12}=M_{1}\cap\ell_{2}$, $p_{13}=M_{1}\cap\ell_{3}$, $p_{23}=M_{2}\cap\ell_{3}$
and $p_{32}=M_{3}\cap\ell_{2}$ with respective exceptional divisors
$E_{12},E_{13},E_{23}$ and $E_{32}$, and then blowing-up the $\mathbb{R}$-rational
points $M_{2}\cap E_{23}$ and $M_{3}\cap E_{32}$ with respective
exceptional divisors $F_{23}$ and $F_{32}$. We let $B_{r}=M_{1}\cup M_{2}\cup M_{3}\cup\ell_{1}\cup\ell_{2}\cup\ell_{3}\cup E_{23}\cup E_{32}$
and we let $Y_{r}(2,4,4)=V_{r}\setminus B_{r}$.

b) The surface $Y_{c}(2,4,4)$. We let $D_{c}$ be the union of a
$\mathbb{R}$-rational fibers $\ell_{1}\simeq\mathbb{P}_{\mathbb{R}}^{1}$
and $M_{1}\simeq\mathbb{P}_{\mathbb{R}}^{1}$ of the first and second
projection and of a pair of conjugate $\mathbb{C}$-rational fibers
$\ell\simeq\mathbb{P}_{\mathbb{C}}^{1}$ and $M\simeq\mathbb{P}_{\mathbb{C}}^{1}$
of the first and second projection respectively. We let $\pi_{c}:V_{c}\rightarrow\mathbb{P}_{\mathbb{R}}^{1}\times\mathbb{P}_{\mathbb{R}}^{1}$
be the surface defined over $\mathbb{R}$ obtained by blowing-up the
$\mathbb{C}$-rational points $M_{1}\cap\ell$ and $M\cap\ell$ with
respective exceptional divisors $E_{1}$ and $E$ and then blowing-up
the $\mathbb{C}$-rational point $M\cap E$ with exceptional divisor
$F$. We let $B_{c}=M_{1}\cup M\cup\ell_{1}\cup\ell\cup E$ and we
let $Y_{c}(2,4,4)=V_{c}\setminus B_{c}$. 

By construction, the surfaces $Y_{r}(2,4,4)$ and $Y_{c}(2,4,4)$
are not isomorphic over $\mathbb{R}$, but their complexifications
$Y_{r}(2,4,4)_{\mathbb{C}}$ and $Y_{c}(2,4,4)_{\mathbb{C}}$ are
isomorphic over $\mathbb{C}$. With the notation of \S \ref{par:curve_arrangement_setup},
the kernel $R$ of the homomorphism $j_{\mathbb{C}}:\mathbb{Z}\langle D_{r,\mathbb{C}}\rangle\rightarrow\mathrm{Cl}(\mathbb{P}_{\mathbb{C}}^{1}\times\mathbb{P}_{\mathbb{C}}^{1})$
is generated by the classes $\ell_{2,\mathbb{C}}-\ell_{1,\mathbb{C}}$,
$\ell_{3,\mathbb{C}}-\ell_{1,\mathbb{C}}$, $M_{2,\mathbb{C}}-M_{1,\mathbb{C}}$
and $M_{3,\mathbb{C}}-M_{1,\mathbb{C}}$ and letting $\mathbb{Z}\langle\mathcal{E}_{0}\rangle$be
the free abelian group generated by $E_{12,\mathbb{C}}$, $E_{13,\mathbb{C}}$,
$F_{23,\mathbb{C}}$ and $F_{32,\mathbb{C}}$, the induced homomorphism
$\varphi:R\rightarrow\mathbb{Z}\langle\mathcal{E}_{0}\rangle$ is
represented by the matrix 
\[
A=\left(\begin{array}{cccc}
1 & 0 & -1 & -1\\
0 & 1 & -1 & -1\\
0 & 1 & 2 & 0\\
1 & 0 & 0 & 2
\end{array}\right)
\]
which has determinant $\det A=8$. A similar argument as in the proof
of Lemma \ref{lem:plane_arrangement_top_charac}, see also Remark~\ref{Rk:Arrangement-top-carac-gen}, shows that $Y_{r}(2,4,4)_{\mathbb{C}}$
is $\mathbb{Q}$-acyclic with $H_{2}(Y_{r}(2,4,4)_{\mathbb{C}};\mathbb{Z})=0$
and $H_{1}(Y_{r}(2,4,4)_{\mathbb{C}};\mathbb{Z})\simeq\mathbb{Z}_{8}$. 
\begin{prop}
Let $S$ be a smooth surface defined over $\mathbb{R}$ such that
$S(\mathbb{R})\approx\mathbb{R}^{2}$ and $S_{\mathbb{C}}\simeq Y_{r}(2,4,4)_{\mathbb{C}}\simeq Y_{c}(2,4,4)_{\mathbb{C}}$.
Then $S$ is isomorphic to $Y_{c}(2,4,4)$ as a scheme over $\mathbb{R}$
and is $\mathbb{R}$-regularly birationally
 equivalent to $\mathbb{A}_{\mathbb{R}}^{2}$. \end{prop}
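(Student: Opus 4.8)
The statement bundles two assertions, and the plan treats them in turn. For the classification of the real form, I would follow the template of the preceding proposition for $Y(3,3,3)$: isomorphy classes of $\mathbb{R}$-forms of $Y(2,4,4)_{\mathbb{C}}$ are in bijection with the Galois cohomology set $H^{1}(\mathbb{Z}_{2},\mathrm{Aut}(Y(2,4,4)_{\mathbb{C}}))$. The first step is therefore to compute $\mathrm{Aut}(Y(2,4,4)_{\mathbb{C}})$. Whereas $Y(3,3,3)$ had the odd cyclic group $\mathbb{Z}_{3}$ (forcing a unique form), here one finds $\mathrm{Aut}(Y(2,4,4)_{\mathbb{C}})\simeq\mathbb{Z}_{4}$; since $H^{1}(\mathbb{Z}_{2},\mathbb{Z}_{4})\simeq\mathbb{Z}_{2}$ for every action of $\mathbb{Z}_{2}$ on $\mathbb{Z}_{4}$, the complex surface has exactly two real forms. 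As $Y_{r}(2,4,4)$ and $Y_{c}(2,4,4)$ were seen above to be non-isomorphic over $\mathbb{R}$ with isomorphic complexifications, they represent these two classes, so every $\mathbb{R}$-form of $Y(2,4,4)_{\mathbb{C}}$ is isomorphic over $\mathbb{R}$ to $Y_{r}(2,4,4)$ or to $Y_{c}(2,4,4)$.

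Next I would distinguish the two forms by their real loci. The surface $Y_{r}(2,4,4)$ is obtained from $\mathbb{P}_{\mathbb{R}}^{1}\times\mathbb{P}_{\mathbb{R}}^{1}$ by blowing up $\mathbb{R}$-rational points only, with $D_{r}$ an arrangement of $\mathbb{R}$-rational curves, so the Galois action on $R$ and $\mathbb{Z}\langle\mathcal{E}_{0}\rangle$ is trivial and the criterion of Lemma \ref{lem:plane_arrangement_top_charac} c) reduces, through Proposition \ref{prop:Galois_real_locus_charac} 2), to the invertibility of $A$ modulo $2$. Since $\det A=8$ is even, $A\otimes\mathbb{Z}_{2}$ is singular, $H^{2}(\varphi)$ fails to be an isomorphism, and $Y_{r}(2,4,4)(\mathbb{R})$ is not diffeomorphic to $\mathbb{R}^{2}$. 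For $Y_{c}(2,4,4)$ the real structure permutes the conjugate boundary components, so $R$ and $\mathbb{Z}\langle\mathcal{E}_{0}\rangle$ carry a nontrivial $\mathbb{Z}_{2}$-action; I would write this action out explicitly and check by a finite computation over $\mathbb{Z}_{2}$ on the relevant invariant/coinvariant subquotients that $H^{2}(\varphi_{c})$ is an isomorphism, so that $Y_{c}(2,4,4)(\mathbb{R})\approx\mathbb{R}^{2}$. Since the hypothesis forces $S(\mathbb{R})\approx\mathbb{R}^{2}$, since $S$ is one of the two forms, and since $Y_{r}(2,4,4)$ is excluded, this gives $S\simeq Y_{c}(2,4,4)$.

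For the $\mathbb{R}$-biregular rectification, the point is that $Y_{c}(2,4,4)$ has Kodaira dimension $0$, so Theorem \ref{thm:Bir-rectif} does not apply directly; instead I would exploit that $\mathbb{R}$-biregular birational equivalence ignores the Kodaira dimension and pass to an auxiliary surface of negative Kodaira dimension. All boundary components of $Y_{c}(2,4,4)_{\mathbb{C}}$ not defined over $\mathbb{R}$ occur in conjugate pairs meeting only at non-real points, so the non-real part $N$ of the boundary has $N(\mathbb{R})=\emptyset$. The plan is to perform $\mathbb{R}$-biregular birational maps supported entirely on such non-real loci — in the spirit of the elementary links of Definition \ref{def:Elementary-links} and the contractions of complex fibers used in Proposition \ref{prop:R-standard-reduction}, together with re-including the real-point-free curves of $N$ into the surface — to reach a smooth affine surface $S'$ with $S'(\mathbb{R})=Y_{c}(2,4,4)(\mathbb{R})\approx\mathbb{R}^{2}$, of negative Kodaira dimension, carrying an $\mathbb{A}^{1}$-fibration $\pi\colon S'\rightarrow\mathbb{A}_{\mathbb{R}}^{1}$ induced by the first projection of $\mathbb{P}_{\mathbb{R}}^{1}\times\mathbb{P}_{\mathbb{R}}^{1}$ after filling a non-real end of the $\mathbb{A}_{*}^{1}$-ruling of $Y_{c}(2,4,4)$. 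By Proposition \ref{prop:NegKod-FiberStruct} 2), every fiber of such a $\pi$ over an $\mathbb{R}$-rational point then has the form $mR+R'$ with $R\simeq\mathbb{A}_{\mathbb{R}}^{1}$ reduced $\mathbb{R}$-rational, so all these fibers contain a reduced $\mathbb{R}$-rational component and $S'$ meets the hypotheses of Theorem \ref{thm:Bir-rectif}. Hence $S'$ is $\mathbb{R}$-biregularly birationally rectifiable, and composing with the $\mathbb{R}$-biregular map $Y_{c}(2,4,4)\dashrightarrow S'$ finishes.

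The main obstacle is this third step: producing, explicitly and verifiably, the $\mathbb{R}$-biregular birational map from the Kodaira-dimension-$0$ surface $Y_{c}(2,4,4)$ to a negative-Kodaira-dimension surface admitting an $\mathbb{A}^{1}$-fibration over $\mathbb{A}_{\mathbb{R}}^{1}$. This demands a careful analysis of the boundary of $Y_{c}(2,4,4)_{\mathbb{C}}$ to ensure that every blow-up and contraction involved is centered on non-real points, so that the real locus is untouched and the map is genuinely $\mathbb{R}$-biregular, and that the intermediate surfaces remain affine with the first projection descending to an honest $\mathbb{A}^{1}$-fibration. By contrast, the Galois-cohomology classification and the mod-$2$ distinction of the real loci in the first two steps amount to finite computations once $\mathrm{Aut}(Y(2,4,4)_{\mathbb{C}})$ is determined.
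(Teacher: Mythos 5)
Your first two steps follow the paper's own argument: there are exactly two real forms, and $Y_{r}(2,4,4)$ is excluded because the reduction of $A$ modulo $2$ is singular ($\det A=8$), so the hypothesis $S(\mathbb{R})\approx\mathbb{R}^{2}$ forces $S\simeq Y_{c}(2,4,4)$. Two remarks, though. First, the paper states $\mathrm{Aut}(Y_{r}(2,4,4)_{\mathbb{C}})\simeq\mathbb{Z}_{2}$, not $\mathbb{Z}_{4}$ as you assert; since $H^{1}(\mathbb{Z}_{2},\mathbb{Z}_{2})$ and $H^{1}(\mathbb{Z}_{2},\mathbb{Z}_{4})$ both have two elements, your count of forms survives, but the claimed automorphism group is unsupported and contradicts the paper. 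Second, your explicit Galois-cohomology check that $Y_{c}(2,4,4)(\mathbb{R})\approx\mathbb{R}^{2}$ is superfluous: the hypothesis already supplies $S(\mathbb{R})\approx\mathbb{R}^{2}$, so once $Y_{r}$ is excluded the identification $S\simeq Y_{c}(2,4,4)$ follows, and the second half of the statement then gives the diffeomorphism type of $Y_{c}(2,4,4)(\mathbb{R})$ for free.

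The genuine gap is your third step, which you yourself flag as ``the main obstacle'': you never actually produce the $\mathbb{R}$-biregular map, and the detour you sketch --- passing to an auxiliary surface $S'$ of negative Kodaira dimension carrying an $\mathbb{A}^{1}$-fibration and invoking Theorem \ref{thm:Bir-rectif} --- is both unverified and unnecessary. The observation you are circling around already \emph{is} the whole proof: by construction $\pi_{c}\colon V_{c}\rightarrow\mathbb{P}_{\mathbb{R}}^{1}\times\mathbb{P}_{\mathbb{R}}^{1}$ is a composition of blow-ups centered at non-real ($\mathbb{C}$-rational) points only, namely $M_{1}\cap\ell$, $M\cap\ell$ and $M\cap E$, and the components $\ell$, $M$, $E$ of $B_{c}$ other than $M_{1}$ and $\ell_{1}$ all have empty real locus. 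Hence $\pi_{c}$ restricts to an $\mathbb{R}$-biregular birational equivalence between the pair $(V_{c},B_{c})$ and $(\mathbb{P}_{\mathbb{R}}^{1}\times\mathbb{P}_{\mathbb{R}}^{1},M_{1}\cup\ell_{1})$, so that $Y_{c}(2,4,4)$ is $\mathbb{R}$-biregularly birationally equivalent to $\mathbb{P}_{\mathbb{R}}^{1}\times\mathbb{P}_{\mathbb{R}}^{1}\setminus(M_{1}\cup\ell_{1})\simeq\mathbb{A}_{\mathbb{R}}^{2}$ directly --- no intermediate surface, no $\mathbb{A}^{1}$-fibration, no appeal to Theorem \ref{thm:Bir-rectif}. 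This directness is also the point of the example: a surface of Kodaira dimension $0$ is rectified, illustrating that Kodaira dimension is not an invariant of $\mathbb{R}$-biregular birational equivalence, so any argument that insists on first reaching negative Kodaira dimension is fighting the example rather than using it.
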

\begin{proof}
The automorphism group of $Y_{r}(2,4,4)_{\mathbb{C}}$ being isomorphic
to $\mathbb{Z}_{2}$, $Y_{r}(2,4,4)$ and $Y_{c}(2,4,4)$ are the
only two $\mathbb{R}$-forms of $Y_{r}(2,4,4)$. Since $j:\mathbb{Z}\langle D_{r}\rangle\rightarrow\mathrm{Cl}(\mathbb{P}_{\mathbb{R}}^{1}\times\mathbb{P}_{\mathbb{R}}^{1})$
is surjective and $\pi_{r}:V_{r}\rightarrow\mathbb{P}_{\mathbb{R}}^{1}\times\mathbb{P}_{\mathbb{R}}^{1}$
consists of blow-up of $\mathbb{R}$-rational points only, we infer
similarly as in the proof of c) in Lemma \ref{lem:plane_arrangement_top_charac}
that $Y_{r}(2,4,4)(\mathbb{R})\approx\mathbb{R}^{2}$ if and only if $\varphi\otimes\mathrm{id}:R\otimes_{\mathbb{Z}}\mathbb{Z}_{2}\rightarrow\mathbb{Z}\langle\mathcal{E}_{0}\rangle\otimes_{\mathbb{Z}}\mathbb{Z}_{2}$
is an isomorphism, which is not the case. Since the reduction of $A$
modulo $2$ is not invertible, we conclude that $Y_{r}(2,4,4)(\mathbb{R})\not\approx\mathbb{R}^{2}$.
On the other hand, since $\pi_{c}:V_{c}\rightarrow\mathbb{P}_{\mathbb{R}}^{1}\times\mathbb{P}_{\mathbb{R}}^{1}$
consists only of blow-ups of $\mathbb{C}$-rational points, the pair
$(V_{c},B_{c})$ is $\mathbb{R}$-regularly birationally equivalent
to $(\mathbb{P}_{\mathbb{R}}^{1}\times\mathbb{P}_{\mathbb{R}}^{1},M_{1}\cup\ell_{1})$
and so $Y_{c}(2,4,4)$ is $\mathbb{R}$-regularly birationally
 equivalent
to $\mathbb{P}_{\mathbb{R}}^{1}\times\mathbb{P}_{\mathbb{R}}^{1}\setminus(M_{1}\cup\ell_{1})\simeq\mathbb{A}_{\mathbb{R}}^{2}$. 
\end{proof}

\subsubsection{Real model of $Y(2,3,6)$}

Staring again, with two triples of $\mathbb{R}$-rational fibers $M_{i}$
and $\ell_{j}$, $i,j=1,2,3$ in $\mathbb{P}_{\mathbb{R}}^{1}\times\mathbb{P}_{\mathbb{R}}^{1}$
of the second and first projection respectively, we let $\pi:V\rightarrow\mathbb{P}_{\mathbb{R}}^{1}\times\mathbb{P}_{\mathbb{R}}^{1}$
be the smooth projective surface obtained by first blowing-up the
points $p_{13}=M_{1}\cap\ell_{3}$, $p_{31}=M_{3}\cap\ell_{1}$, $p_{22}=M_{2}\cap\ell_{2}$
and $p_{23}=M_{2}\cap\ell_{3}$ with respective exceptional divisors
$E_{13}$, $E_{31}$, $E_{22}$ and $E_{23}$ and then blowing-up
the points $p_{22}'=E_{22}\cap\ell_{2}$, $p_{13}'=M_{1}\cap E_{13}$
and $p_{31}'=E_{31}=M_{3}\cap E_{31}$ with respective exceptional
divisors $F_{22}$, $F_{13}$ and $F_{31}$. We let $B=M_{1}\cup M_{2}\cup M_{3}\cup\ell_{1}\cup\ell_{2}\cup\ell_{3}\cup E_{22}\cup E_{13}\cup E_{31}$
and we let $Y(2,3,6)=V\setminus B$. Then $Y(2,3,6)$ is a smooth
affine surface defined over $\mathbb{R}$ such that $H_{1}(Y(2,3,6)_{\mathbb{C}};\mathbb{Z})\simeq\mathbb{Z}_{6}$
and $H_{2}(Y(2,3,6)_{\mathbb{C}};\mathbb{Z})=0$. Since $V$ is obtained
from $\mathbb{P}_{\mathbb{R}}^{1}\times\mathbb{P}_{\mathbb{R}}^{1}$
by blowing-up $\mathbb{R}$-rational points only, we deduce in a similar
way as in the previous case that $Y(2,3,6)(\mathbb{R})\not\approx\mathbb{R}^{2}$.
Furthermore, since the automorphism group of $Y(2,3,6)_{\mathbb{C}}$
is trivial, there is no nontrivial $\mathbb{R}$-form of $Y(2,3,6)$.
Summing up, there is no smooth affine surface $S$ defined over $\mathbb{R}$
with $S(\mathbb{R})\approx\mathbb{R}^{2}$ and $S_{\mathbb{C}}\simeq Y(2,3,6)_{\mathbb{C}}$.

\subsection{\label{sub:Moduli-of--biregularly}Moduli of $\mathbb{R}$-biregularly birationally rectifiable surfaces of negative Kodaira dimension}

\indent\newline\noindent As seen in the introduction, in the rational
projective case, there is a unique minimal complexification or at
most one family of pairwise non isomorphic but $\mathbb{R}$-biregularly
birationally and deformation equivalent minimal complexifications.
Non minimal complexifications are obtained from these models by blowing-up
sequences of pairs of non-real conjugate points. It is natural to
expect an affine counterpart of this type of results in the form of
continuous moduli of $\mathbb{Q}$-acyclic euclidean planes of negative
Kodaira dimension all $\mathbb{R}$-biregularly birationally equivalent
to each others. For instance, starting with the standard open embedding
of $\mathbb{A}_{\mathbb{R}}^{2}$ in $\mathbb{P}_{\mathbb{R}}^{2}$
as the complement of a line $L_{\infty}\simeq\mathbb{P}_{\mathbb{R}}^{1}$
and performing a sequence of blow-ups $\tau:V\rightarrow\mathbb{P}_{\mathbb{R}}^{2}$
defined over $\mathbb{R}$ whose centers lie over $L_{\infty}$, one
obtains open embeddings $\mathbb{A}_{\mathbb{R}}^{2}\hookrightarrow V$
into various smooth projective surfaces defined over $\mathbb{R}$,
which, in restriction to the real loci correspond to smooth open embeddings
of $\mathbb{R}^{2}$ into smooth compact non-orientable surfaces of
arbitrary genus $g\geq1$. For a fixed number $g-1\geq0$ of $\mathbb{R}$-rational
points blown-up, the isomorphy type as real algebraic varieties of
the so-constructed surfaces $V$ with $g(V(\mathbb{R}))=g$ depend
on the choice of the points, giving rise in general to a continuous
moduli of such algebraic surfaces. In contrast, it follows from \cite{BiH07}
that their equivalence classes up to $\mathbb{R}$-biregular birational
isomorphisms depend only on $g$, which in this particular case coincides
simply with the number of $\mathbb{R}$-rational irreducible components
of the boundary $B=V\setminus\mathbb{A}_{\mathbb{R}}^{2}$. 

The next proposition illustrates the existence of infinitely many
deformation equivalence classes of pairwise non isomorphic $\mathbb{Q}$-acyclic
euclidean planes all $\mathbb{R}$-biregularly birationally equivalent
to $\mathbb{A}_{\mathbb{R}}^{2}$, each deformation equivalence class
having further a moduli of arbitrary positive dimension $n\geq3$.
\begin{prop}
Let $Y=\mathrm{Spec}(\mathbb{R}[a_{1},\ldots,a_{n}])$, $n\geq3$,
let $r\geq3$ be an odd integer, and let $\mathfrak{X}\subset Y\times\mathbb{A}_{\mathbb{R}}^{3}$
be the subvariety with equation $x^{n+1}z=y^{r}+\sum_{i=2}^{n}a_{i}x^{i+1}+x^{2}+x$.
Then the following hold:

1) The projection $\mathrm{pr}_{Y}:\mathfrak{X}\rightarrow Y$ is
smooth and $\mathrm{pr}_{Y}(\mathbb{R}):\mathfrak{X}(\mathbb{R})\rightarrow Y(\mathbb{R})$
is a trivial $\mathbb{R}^{2}$-bundle over $Y(\mathbb{R})\approx\mathbb{R}^{n}$. 

2) For every $\mathbb{R}$-rational point $p\in Y$, the scheme theoretic
fiber $S=\mathfrak{X}_{p}$ is a smooth connected affine surface defined
of $\mathbb{R}$, of negative Kodaira dimension with $S(\mathbb{R})\approx\mathbb{R}^{2}$
and $H_{1}(S_{\mathbb{C}}(\mathbb{C});\mathbb{Z})\simeq\mathbb{Z}_{r}$,
$H_{2}(S_{\mathbb{C}}(\mathbb{C});\mathbb{Z})=0$. The restriction
of $S$ of the projection $\mathrm{pr}_{x}$ is an $\mathbb{A}^{1}$-fibration
$q\colon S\rightarrow\mathbb{A}_{\mathbb{R}}^{1}$ with $q^{-1}(0)$ as
a unique geometrically irreducible degenerate fiber, of multiplicity
$r$. In particular, $S$ is $\mathbb{R}$-biregularly birationally equivalent to $\mathbb{A}_{\mathbb{R}}^{2}$. 

3) Every $S=\mathfrak{X}_{p}$ is deformation equivalent to $\mathfrak{X}_{0}$
via the retraction $Y\rightarrow\{0\}$, $(a_{2},\ldots,a_{n})\in\mathbb{R}^{n-1}\mapsto(ta_{2},\ldots,ta_{n})$,
$t\in\mathbb{R}$. 

4) Let $p=(a_{1},\ldots,a_{n})\in Y(\mathbb{R})$ and $p'=(a_{1}',\ldots,a_{n}')\in Y(\mathbb{R})$.
Then $\mathfrak{X}_{p}$ is isomorphic to $\mathfrak{X}_{p'}$ if
and only if $p=p'$.\end{prop}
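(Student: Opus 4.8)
The plan is to dispatch assertions (1)--(3) by direct verification and to concentrate the real work on the injectivity statement (4). Throughout write $F = x^{n+1}z - y^{r} - f(x)$ with $f(x) = x + x^{2} + \sum_{i\geq 2} a_i x^{i+1}$, so that $\mathfrak{X} = \{F=0\}$ and each fibre $S=\mathfrak{X}_p$ is $\{F=0\}\subset\mathbb{A}^3_{x,y,z}$. For (1) I would first check that $\mathfrak{X}$ is smooth by the Jacobian criterion: the vertical derivative $\partial_z F = x^{n+1}$ is nonzero off $\{x=0\}$, while along $\{x=0\}$ one has $\partial_x F = -f'(0) = -1\neq 0$, so the vertical derivatives never vanish simultaneously on $\mathfrak{X}$. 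The same computation with the $a_i$ frozen shows every fibre $S$ is smooth of dimension $2$, whence $\mathrm{pr}_Y$ is a smooth morphism of relative dimension $2$. The real triviality is then immediate from $r$ being odd: the real $r$-th root is a homeomorphism of $\mathbb{R}$, so $(a,x,z)\mapsto\bigl(a,x,\sqrt[r]{x^{n+1}z - f(x)},z\bigr)$ is a homeomorphism $\mathbb{R}^{n}\times\mathbb{R}^{2}\to\mathfrak{X}(\mathbb{R})$ over $Y(\mathbb{R})=\mathbb{R}^{n}$, with inverse the projection to $(a,x,z)$.

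For (2) and (3), fix $p$. Viewing $F$ as linear in $z$ with coefficients $x^{n+1}$ and $-(y^{r}+f(x))$, which are coprime in $\mathbb{C}[x,y]$ because $x\nmid y^{r}+f(x)$, shows $S_{\mathbb{C}}$ is integral; hence $S$ is geometrically integral and affine. The projection $q=\mathrm{pr}_x$ restricts to a trivial $\mathbb{A}^1$-bundle over $\{x\neq 0\}$ (there $z=(y^{r}+f(x))/x^{n+1}$, so $(x,y)$ are coordinates), and $q^{-1}(0)=\mathrm{Spec}\bigl(\mathbb{R}[y,z]/(y^{r})\bigr)$ is $r$ times the $\mathbb{R}$-rational line $\{y=0\}\cong\mathbb{A}^1_{\mathbb{R}}$. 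Thus $q$ is an $r$-standard $\mathbb{A}^1$-fibration in the sense of Definition \ref{def:Standar-r-pair}, so Proposition \ref{prop:NegKod-FiberStruct} together with the homology computation carried out in the proof of Proposition \ref{prop:r-standard-stability-links} gives that $S_{\mathbb{C}}$ is $\mathbb{Q}$-acyclic with $H_1(S_{\mathbb{C}};\mathbb{Z})\cong\mathbb{Z}/r\mathbb{Z}$ and $H_2=0$, while $\kappa(S)=-\infty$ since $S$ is $\mathbb{A}^1$-fibered. As (1) already yields $S(\mathbb{R})\approx\mathbb{R}^2$ and $q$ has a single degenerate fibre, Theorem \ref{thm:Bir-rectif} applies and gives $\mathbb{R}$-biregular birational equivalence with $\mathbb{A}^2_{\mathbb{R}}$. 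Assertion (3) is then formal: the smooth morphism $\mathrm{pr}_Y$ over the connected base $Y$ exhibits any two fibres as members of one smooth family, and the scaling path $t\mapsto t\cdot p$ is an explicit connection to $\mathfrak{X}_0$.

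The crux is (4). The linear automorphism $(x,y,z)\mapsto(\alpha x,\lambda y,\alpha^{-(n+1)}\lambda^{r}z)$ of $\mathbb{A}^3$ carries $S_p$ isomorphically onto the surface with defining polynomial $x^{n+1}z-y^{r}-\lambda^{-r}f(\alpha x)$; since the coefficients of $x$ and of $x^2$ in $f$ are both normalized to $1$, demanding that the image be again normalized forces $\lambda^{-r}\alpha=\lambda^{-r}\alpha^{2}=1$, hence $\alpha=1$ and $\lambda^{r}=1$, whence $\lambda=1$ because $r$ is odd and we work over $\mathbb{R}$. So distinct $p$ yield scaling-inequivalent, hence unequal, polynomials $f_p$. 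It therefore suffices to prove that every isomorphism $\phi^{*}\colon A_{p'}\to A_p$ of coordinate rings is, up to the fibered automorphisms of $S_p$, such a scaling. First I would compute the Makar--Limanov invariant $\mathrm{ML}(A_p)=\mathbb{R}[x]$ (legitimate since the $x$-exponent $n+1\geq 2$ and $r\geq 2$), which makes $x$ canonical up to an affine transformation, so $\phi^{*}(x')=\alpha x+\beta$; the fact that $x=0$ is the unique multiple fibre of the $\mathbb{A}^1$-fibration, necessarily preserved by $\phi$, forces $\beta=0$. Then, comparing the two defining relations modulo suitable powers of $x$ — equivalently, comparing the induced isomorphisms of the thickened degenerate fibres $A_p/(x^{N})$ together with the trivialization over $\{x\neq 0\}$ — one extracts a relation $f_{p'}(\alpha x)=\lambda^{r}f_p(x)$ modulo the $\mathbb{G}_a$-shears along the fibration, and concludes by the normalization above.

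I expect the genuine difficulty to lie entirely in this last reduction. These are generalized Danielewski surfaces with a large, non-tame automorphism group: the $\mathbb{G}_a$-actions produce shears $y\mapsto y+x^{n+1}(\cdots)$, $z\mapsto z+(\cdots)$, so one cannot hope that $\phi^{*}(y')=\lambda y$ outright. The hard part will be to show that, after absorbing these shears, the only surviving freedom is the scaling torus, and that the coefficients $a_i$ of $f_p$ are genuine isomorphism invariants. I would handle this via the structure theory of $\mathbb{A}^1$-fibered affine surfaces and their standard completions $(V_p,B_p)$ from $\S$\ref{par:Standard-Comp}, in which $f_p$ enters as rigid gluing data once $x$ and the distinguished section are normalized.
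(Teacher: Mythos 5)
Your parts (1)--(3) are correct and coincide with the paper's own argument: the Jacobian criterion plus the explicit homeomorphism $(a,x,z)\mapsto\bigl(a,x,\sqrt[r]{x^{n+1}z-f(x)},z\bigr)$ for (1); the identification of $q=\mathrm{pr}_x$ as an $r$-standard $\mathbb{A}^{1}$-fibration (Definition \ref{def:Standar-r-pair}) with $q^{-1}(\mathbb{A}_{\mathbb{R}}^{1}\setminus\{0\})$ a trivial $\mathbb{A}^{1}$-bundle and $q^{*}(\{0\})$ geometrically irreducible of multiplicity $r$, followed by Proposition \ref{prop:NegKod-FiberStruct} and Theorem \ref{thm:Bir-rectif}, for (2); and the scaling path for (3).

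The genuine gap is in (4). The paper's proof of (4) has two ingredients: (i) Theorem 6.1 of \cite{Po11}, a classification theorem for Danielewski hypersurfaces $x^{n}z=Q(x,y)$, which asserts that $\mathfrak{X}_{p,\mathbb{C}}$ and $\mathfrak{X}_{p',\mathbb{C}}$ are isomorphic if and only if there exist $\lambda,\alpha,\mu\in\mathbb{C}^{*}$ and $\beta\in\mathbb{C}$ with $(\alpha y+\beta)^{r}+\sum_{i}a_{i}(\lambda x)^{i+1}+(\lambda x)^{2}+\lambda x=\mu\bigl(y^{r}+\sum_{i}a_{i}'x^{i+1}+x^{2}+x\bigr)$; and (ii) an elementary coefficient comparison showing any such identity forces $\beta=0$, $\lambda=\mu=1$ and $a_{i}=a_{i}'$. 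Your normalization computation is precisely (ii), and it is fine. What you have not proved is (i): that an arbitrary isomorphism of these surfaces can, after absorbing the fibered automorphisms, be put into this triangular-times-scaling form. Your sketch --- $\mathrm{ML}(A_{p})=\mathbb{R}[x]$, hence $\phi^{*}(x')=\alpha x$ because the unique multiple fibre must be preserved, then \emph{comparing the defining relations modulo powers of $x$} to absorb the $\mathbb{G}_{a}$-shears --- is indeed the correct outline of how such rigidity statements are established, but you yourself flag the absorption of the shears and the invariance of the $a_{i}$ as the hard part and leave it unexecuted. That step is the entire mathematical content of (4): these surfaces have large non-tame automorphism groups, and nothing in your text rules out an exotic isomorphism mixing the coefficients $a_{i}$. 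The quickest repair is to do exactly what the paper does and invoke \cite[Theorem 6.1]{Po11}; the alternative is to carry your sketch through in full, which amounts to reproving that classification for this family and is a substantial argument, not a fill-in detail. Note finally that the citation route yields the stronger conclusion that already the complexifications $\mathfrak{X}_{p,\mathbb{C}}$ are pairwise non-isomorphic, whereas your argument as set up only addresses isomorphisms over $\mathbb{R}$.
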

\begin{proof}
The first assertion follows from the Jacobian criterion and the observation
that the map
\begin{eqnarray*}
\psi:Y(\mathbb{R})\times\mathbb{R}^{2}\rightarrow\mathfrak{X}(\mathbb{R}) &  & (a_{1},\ldots,a_{n},x,z)\mapsto(a_{1},\ldots,a_{n},x,\sqrt[r]{x^{n+1}z-\sum_{i=2}^{n}a_{i}x^{i+1}-x^{2}-x},z)
\end{eqnarray*}
is an homeomorphism. For every $p\in Y(\mathbb{R})$, $q:S=\mathfrak{X}_{p}\rightarrow\mathbb{A}_{\mathbb{R}}^{1}$
is an $r$-standard $\mathbb{A}^{1}$-fibered surface with $q^{-1}(\mathbb{A}_{\mathbb{R}}^{1}\setminus\{0\})\simeq\mathrm{Spec}(\mathbb{R}[x^{\pm1},y])\simeq(\mathbb{A}_{\mathbb{R}}^{1}\setminus\{0\})\times\mathbb{A}_{\mathbb{R}}^{1}$
and $q^{*}(\{0\})\simeq\mathrm{Spec}(\mathbb{R}[y]/(y^{r})[z])$.
So 2) follows from Proposition \ref{prop:NegKod-FiberStruct} and
Theorem \ref{thm:Bir-rectif}. The third assertion is clear. For the
last assertion, letting $S=\mathfrak{X}_{p}$ and $S'=\mathfrak{X}_{p'}$,
it follows from Theorem 6.1 in \cite{Po11} that $S_{\mathbb{C}}$
and $S'_{\mathbb{C}}$ are isomorphic if and only if there exists
$\lambda,\alpha,\mu\in\mathbb{C}^{*}$ and $\beta\in\mathbb{C}$ such
that 
\[
(\alpha y+\beta)^{r}+\sum_{i=2}^{n}a_{i}(\lambda x)^{i+1}+(\lambda x)^{2}+\lambda x=\mu(y^{r}+\sum_{i=2}^{n}a_{i}'x^{i+1}+x^{2}+x).
\]
The previous identity implies that $\beta=0$, $\mu=\alpha^{r}=\lambda^{2}=\lambda$
and $a_{i}\lambda^{i+1}=\mu a_{i}'$ for $i=2,\ldots,n$. Thus $\lambda=\mu=1$
necessarily and so $(a_{1},\ldots,a_{n})=(a_{1}',\ldots,a_{n}')$. 
\end{proof}

\bibliographystyle{amsplain} 

\begin{thebibliography}{99}

\bibitem{BPV}
W.~P. Barth, K.~Hulek, C.~A.~M. Peters, A.~Van~de Ven, \emph{Compact complex surfaces}, second ed.,
  Ergebnisse der Mathematik und ihrer Grenzgebiete. 3. Folge., vol.~4,
  Springer-Verlag, Berlin, 2004.

\bibitem{BD11} J. Blanc and A. Dubouloz, \emph{Automorphisms of $\mathbb{A}^1$-fibered surfaces}, Trans. Amer. Math. Soc. 363 (2011), 5887-5924. 

\bibitem{BH61} A.~Borel and  A.~Haefliger, \emph{La classe d'homologie fondamentale d'un espace analytique}, Bull. Soc. Math. France, 89 (1961), 461-513.

\bibitem{BiH07} I. Biswas and J. Huisman, \emph{Rational real algebraic models of topological surfaces}, Doc. Math. 12 (2007), 549-567.

\bibitem{CS10}
D.~I. Cartwright and T.~Steger,
\emph{Enumeration of the 50 fake projective planes}, C. R. Math. Acad.
  Sci. Paris 348 (2010), 11-13.

\bibitem{Co12}
A~Comessatti, 
\emph{Fondamenti per la geometria sopra le suerficie razionali dal punto di vista reale}, Math. Ann. 73 (1912), no.~1,
  1-72.
%

\bibitem{Gi-Da1} V. I. Danilov and M. H. Gizatullin, \emph{Automorphisms of affine surfaces. I.},  Izv. Akad. Nauk SSSR Ser. Mat.  39  (1975), no. 3, 523-565.


\bibitem{tDPe90} T. tom Dieck and T. Petrie, \emph{Contractible affine surfaces of Kodaira dimension one}, Japan J. Math. 16 (1990), 147-169.

\bibitem{tDPe90-2} T. tom Diek and T. Petrie, \emph{Symmetric homology planes}, Math. Ann. 286 (1990), 143-152.

\bibitem{tDPe93} T. tom Dieck and T. Petrie, \emph{Homology planes and algebraic curves}, Osaka J. Math. 30 (1993), no. 4, 855-886.

\bibitem{Du05}  A. Dubouloz, \emph{Completions of normal affine surfaces with a trivial Makar-Limanov invariant}, Michigan Math. J.  52 (2004), no. 2, 289-308. 

\bibitem{DM16} A. Dubouloz and F. Mangolte, 
\emph{Real frontiers of fake planes}, 
Eur. J. Math. 2 (2016), no.~1, p.~140--168.

\bibitem{Fu82} T. Fujita, \emph{On the topology of noncomplete algebraic surfaces}, J. Fac. Sci. Univ. Tokyo Sect. IA Math. 29 (1982), no. 3, 503-566.

\bibitem{GuMi87} R.V. Gurjar and M. Miyanishi, \emph{Affine surfaces with $\overline{\kappa}\leq 1$}, Algebraic Geometry and Commutative Algebra, in honor of M. Nagata, 1987, 99-124.


\bibitem{GuP97} V. Gurjar and C. R. Pradeep, \emph{$\mathbb{Q}$-homology planes are rational. III}, Osaka J. Math. 36 (1999), no. 2, 259-335. 

\bibitem{GuPS97} R. V. Gurjar, C. R. Pradeep, and Anant. R. Shastri, \emph{On rationality of logarithmic $\mathbb{Q}$-homology planes. II}, Osaka J. Math. 34 (1997), no. 3, 725-743.

\bibitem{Ha77} R. Hartshorne, \emph{Algebraic {G}eometry}, Graduate Texts in Mathematics, No. 52. Springer-Verlag, New York-Heidelberg, 1977.

\bibitem{Ha02} A.~Hatcher, \emph{Algebraic topology}, Cambridge University Press,   Cambridge, 2002.



\bibitem{HM09} J. Huisman and F. Mangolte, \emph{The group of automorphisms of a real rational surface is $n$-transitive}, Bull. Lond. Math. Soc. 41 (2009), 563-568. 

\bibitem{Ii70} S. Iitaka, \emph{On {D}-dimensions of algebraic varieties}, Proc. Japan Acad. 46 1970, 487-489.

\bibitem{Ii77} S. Iitaka,  \emph{On logarithmic Kodaira dimension of algebraic varieties}, Complex analysis and algebraic geometry, 175-189. Iwanami Shoten, Tokyo, 1977.

\bibitem{Kam75} T. Kambayashi, \emph{On the absence of nontrivial separable forms of the affine plane}, J. of Algebra 35 (1975), 449-456 .

\bibitem{Ko01real} J. Koll\'ar, \emph{The topology of real algebraic varieties}, in \emph{Current   developments in mathematics, 2000}, Int. Press, Somerville, MA, 2001,  197-231.

\bibitem{Kr83}  V.~A. Krasnov, \emph{Harnack-{T}hom inequalities for mappings of real algebraic varieties}, Izv. Akad. Nauk SSSR Ser. Mat. 47 (1983), no.~2, 268-297.  

\bibitem{KK02}
V.~S. Kulikov and  V.~M. Kharlamov,  \emph{On
  real structures on rigid surfaces}, Izv. Ross. Akad. Nauk Ser. Mat. 66 (2002), 133-152.

\bibitem{Ma15} F. Mangolte, \emph{Real rational surfaces}, in \emph{Real Algebraic Geometry}, Panoramas et synthèses, 2015,  \texttt{arXiv:1503.01248 [math.AG]},  to appear, 29 pages, 6 figure. 

\bibitem{MiyBook} M. Miyanishi, \emph{Open {A}lgebraic {S}urfaces}, CRM Monogr. Ser., 12, Amer. Math. Soc., Providence, RI, 2001. 

\bibitem{MS80} M. Miyanishi and T. Sugie, \emph{Affine surfaces containing cylinderlike open sets}, J. Math. Kyoto Univ. 20, no. 1 (1980), 11-42. 

\bibitem{Mu79}
D.~Mumford,  \emph{An algebraic surface with {$K$} ample,
  {$(K^{2})=9$}, {$p_{g}=q=0$}}, 
Amer. J. Math. 101 (1979), 233-244.

\bibitem{Na62} M. Nagata, \emph{Imbedding of an abstract variety in a complete variety}, J. Math. Kyoto 2, 1-10,  1962.

\bibitem{Po11} P.-M. Poloni, \emph{Classification(s) of Danielewski hypersurfaces},Transformation Groups, Volume 16, Issue 2 (2011), 579-597.

\bibitem{PY07}
G.~Prasad and S.-K. Yeung, \emph{Fake
  projective planes}, Invent. Math. 168 (2007), 321-370.

\bibitem{PY10}
G.~Prasad and S.-K. Yeung, \emph{Addendum to ``{F}ake projective planes'' {I}nvent. {M}ath. 168,
  321--370 (2007)}, Invent. Math.182 (2010), 213-227.

\bibitem{Ram71} C.P. Ramanujam, \emph{A  topological  characterisation  of  the  affine  plane  as  an  algebraic variety}, Ann. of Math. 94 (1971), 69-88.

\bibitem{Se55} J.-P. Serre, \emph{G{\'e}om{\'e}trie alg{\'e}brique et g{\'e}om{\'e}trie analytique}, Ann. Inst. Fourier, Grenoble 6 (1955), 1-42.

\bibitem{Sil89}  R.~Silhol, \emph{Real algebraic surfaces}, Lecture Notes in Mathematics, vol.   1392, Springer-Verlag, Berlin, 1989.

\bibitem{To03} B. Totaro, \emph{Complexifications of nonnegatively curved   manifolds}, J. Eur. Math. Soc. (JEMS) 5, no. 1 (2003), 69-94. 

\bibitem{Wa35}
R.~J. Walker, \emph{Reduction of the singularities of an algebraic
  surface}, Ann. of Math. (2) 36 (1935), no.~2, 336-365.

\bibitem{Za99} M. Zaĭdenberg, \emph{Exotic algebraic structures on affine spaces} (Russian); translated from Algebra i Analiz 11 (1999), no.~5, 3--73 St. Petersburg Math. J. 11 (2000), no.~5, 703-760. 

\end{thebibliography}

\end{document}